\documentclass[11pt,twoside]{article}
\usepackage{fullpage}

\usepackage{epsf}
\usepackage{fancyhdr}
\usepackage{graphics}
\usepackage{graphicx}
\usepackage{psfrag}
\usepackage{microtype}
\usepackage{subfigure}
\usepackage{algorithmic}
\usepackage{color,xcolor}
\usepackage[linesnumbered,ruled]{algorithm2e}% http://ctan.org/pkg/algorithm2e
\DontPrintSemicolon
\usepackage{color}

\usepackage{textcmds}
\usepackage{mathrsfs}
\usepackage{amsthm}
\usepackage{amsfonts}
\usepackage{amsmath}
\usepackage{amssymb,bbm}
\usepackage{bigints}

%!TEX root = EM_singular_models.tex

% PDF margin etc settings
\setlength{\textwidth}{\paperwidth}
\addtolength{\textwidth}{-6cm}
\setlength{\textheight}{\paperheight}
\addtolength{\textheight}{-4cm}
\addtolength{\textheight}{-1.1\headheight}
\addtolength{\textheight}{-\headsep}
\addtolength{\textheight}{-\footskip}
\setlength{\oddsidemargin}{0.5cm}
\setlength{\evensidemargin}{0.5cm}

%%%%%%%%%%%%%%%%%%%%%%%%%%%%%%%%

% MACROS HERE

%%%%%%%%%%%%%%%%%%%%%%%%%%%%%%%%

% Observations, dimension etc.

% some mathcal notations

% Basic statistics notation

% True parameter

% Estimate one

% Estimate two

% Distributions

% Spaces

% Brackets Size

% Generic vectors and scalars
 % for generic vector
 % for generic vector

% EM updates

% EM contractions

% Nhat's macros 

\newcommand{\Ecal}{\ensuremath{\mathcal{E}}}

% Universal constants

% some parameters that you may define
 % strong  convexity parameter
 % smoothness parameter

%%%%%%%%% Distributions and Random variables %%%%%%%%%%%
 % to denote the random variable g

%%%%%%%%% Basic Terms like defn, etal, tmix, polylog %%%%%%%%%%%

% Some vector/matrix norms

 % euclidean norm

%%% EM paper
 % euclidean norm

% Inner product

% Kullback-Leibler

% Probability

%Eigenvector / eigenvalue related notation

% \DeclareMathOperator{\det}{det}

\DeclareMathOperator{\sign}{sign}

\newtheoremstyle{named}{}{}{\itshape}{}{\bfseries}{.}{.5em}{\thmnote{#3's }#1}
\theoremstyle{named}

%%%%%%%
\theoremstyle{plain}

% {Theorem, Proposition, Lemma, Corollary} numbered sequentially
% throughout the paper
\newtheorem{theorem}{Theorem}[section]
\newtheorem{proposition}{Proposition}[section]
\newtheorem{lemma}[proposition]{Lemma}

\newtheorem{corollary}[theorem]{Corollary}
\newtheorem{definition}[proposition]{Definition}

%%%%%%%%%%%%%%%%%%%%%%%%%%%%%%%%%%%%%%%%%%%%%%%%%%%%%%%%%%%%%%%%%%%%%%%
% WIDEBAR COMMAND
\newlength{\widebarargwidth}
\newlength{\widebarargheight}
\newlength{\widebarargdepth}

\long\def\comment#1{}
\definecolor{carnelian}{rgb}{0.7, 0.11, 0.11}
\definecolor{battleshipgrey}{rgb}{0.52, 0.52, 0.51}
\definecolor{darkgray}{rgb}{0.66, 0.66, 0.66}
\definecolor{indiagreen}{rgb}{0.07, 0.53, 0.03}
\definecolor{darkgreen}{rgb}{0.0, 0.2, 0.13}
\definecolor{darkspringgreen}{rgb}{0.09, 0.45, 0.27}
\definecolor{dukeblue}{rgb}{0.0, 0.0, 0.61}
\definecolor{olivedrab7}{rgb}{0.24, 0.2, 0.12}
\definecolor{darkblue}{rgb}{0.0, 0.0, 0.55}
\definecolor{darkscarlet}{rgb}{0.34, 0.01, 0.1}
\definecolor{candyapplered}{rgb}{1.0, 0.03, 0.0}
\definecolor{ao(english)}{rgb}{0.0, 0.5, 0.0}
\definecolor{applegreen}{rgb}{0.55, 0.71, 0.0}

%%% New version of \caption puts things in smaller type, single-spaced
%%% and indents them to set them off more from the text.
\makeatletter
\long\def\@makecaption#1#2{
        \vskip 0.8ex
        \setbox\@tempboxa\hbox{\small {\bf #1:} #2}
        \parindent 1.5em  %% How can we use the global value of this???
        \dimen0=\hsize
        \advance\dimen0 by -3em
        \ifdim \wd\@tempboxa >\dimen0
                \hbox to \hsize{
                        \parindent 0em
                        \hfil
                        \parbox{\dimen0}{\def\baselinestretch{0.96}\small
                                {\bf #1.} #2
                                %%\unhbox\@tempboxa
                                }
                        \hfil}
        \else \hbox to \hsize{\hfil \box\@tempboxa \hfil}
        \fi
        }
\makeatother

\RequirePackage{natbib}
\bibliographystyle{icml2023}

\usepackage{url}% for url's in bib
% for theorem hyperlink colors
\usepackage[colorlinks,linkcolor=magenta,citecolor=blue, pagebackref=true,backref=true]{hyperref}
\renewcommand*{\backrefalt}[4]{%
    \ifcase #1 \footnotesize{(Not cited.)}%
    \or        \footnotesize{(Cited on page~#2.)}%
    \else      \footnotesize{(Cited on pages~#2.)}%
    \fi}
%\usepackage{mathtools}
%\textwidth=6.2in
%\textheight=9in
%\topmargin -.5in
%\setlength{\oddsidemargin}{0.25in}
%\setlength{\headheight}{0.3in}
%\setlength{\headsep}{0.4in}
%\setlength{\footskip}{0.25in}
%\usepackage{graphicx}
%\usepackage{amsfonts,graphicx}
%\usepackage{amsfonts,amsmath,amssymb,amsthm}
%\usepackage[usenames,dvipsnames]{color}
%\usepackage{epsf}
%\usepackage{fancyheadings}
%\usepackage{graphics}
%\usepackage{psfrag}
%\usepackage{times}
%\usepackage{epsfig}
%\usepackage{subfig}
%\usepackage[titletoc]{appendix}
%\usepackage{multirow}
%\usepackage{hhline}
%\usepackage{tikz}
%\usetikzlibrary{matrix}
%\usepackage{caption}
%\usepackage{titlesec}

%\RequirePackage{natbib}
%\RequirePackage[OT1]{fontenc}
%\RequirePackage{amsthm,amsmath}
%\RequirePackage[numbers]{natbib}
%\RequirePackage[colorlinks,citecolor=blue,urlcolor=blue]{hyperref}
%\RequirePackage[dvips]{hyperref}
%\usepackage{natbib}
%\bibliographystyle{plainnat}
\setlength{\textwidth}{\paperwidth}
\addtolength{\textwidth}{-6cm}
\setlength{\textheight}{\paperheight}
\addtolength{\textheight}{-4cm}
\addtolength{\textheight}{-1.1\headheight}
\addtolength{\textheight}{-\headsep}
\addtolength{\textheight}{-\footskip}
\setlength{\oddsidemargin}{0.5cm}
\setlength{\evensidemargin}{0.5cm}

%%%%%%%%%%%%%%%%%%% Macros start here
%\input{final_macros}

\long\def\comment#1{}
\renewcommand\vec[1]{\ensuremath\boldsymbol{#1}}

\newcommand{\Ocal}{\ensuremath{\mathcal{O}}}
\newcommand{\Gcal}{\ensuremath{\mathcal{G}}}

\newcommand{\excessmass}{\ensuremath{\mathcal{EX}}}

\makeatletter
\newcommand\mathcircled[1]{%
  \mathpalette\@mathcircled{#1}%
}
\newcommand\@mathcircled[2]{%
  \tikz[baseline=(math.base)] \node[draw,circle,inner sep=1pt] (math) {$\m@th#1#2$};%
}
\makeatother

\theoremstyle{plain}

% \newtheorem{example}{Example}

% \numberwithin{example}{section}

\numberwithin{remark}{section}

%%% Redefine abstract environment
\renewenvironment{abstract}
 {\small
  \begin{center}
  \bfseries \abstractname\vspace{-.5em}\vspace{0pt}
  \end{center}
  \list{}{%
    \setlength{\leftmargin}{15mm}% <---------- CHANGE HERE
    \setlength{\rightmargin}{\leftmargin}%
  }%
  \item\relax}
 {\endlist}

\begin{document}
\begin{center}

{\bf{\LARGE{On Excess Mass Behavior in Gaussian Mixture Models with Orlicz-Wasserstein Distances}}}

\vspace*{.2in}
 {\large{
 \begin{tabular}{ccc}
  Aritra Guha$^{\star}$ & Nhat Ho$^{\dagger}$ &  XuanLong Nguyen$^{\star}$
 \end{tabular}
}}

 \vspace*{.2in}

 \begin{tabular}{c}
 Department of Statistics, University of Michigan$^\star$\\
 Department of Statistics and Data Sciences,
The University of Texas at Austin$^\dagger$
 \end{tabular}

\vspace*{.2in}

\today

\vspace*{.2in}

\begin{abstract}
Dirichlet Process mixture models (DPMM) in combination with Gaussian kernels have been an important modeling tool for numerous data domains arising from biological, physical, and social sciences. However, this versatility in applications does not extend to strong theoretical guarantees for the underlying parameter estimates, for which only a logarithmic rate is achieved. In this work, we (re)introduce and investigate a metric, named Orlicz-Wasserstein distance, in the study of the Bayesian contraction behavior for the parameters. We show that despite the overall slow convergence guarantees for all the parameters, posterior contraction for parameters happens at almost polynomial  rates in outlier regions of the parameter space. Our theoretical results provide new insight in understanding the convergence behavior of parameters arising from various settings of hierarchical Bayesian nonparametric models. In addition, we provide an algorithm to compute the metric by leveraging Sinkhorn divergences and validate our findings through a simulation study.

% However, for estimation of theoretical   However, a complete understanding of the effects of parameter space, kernel density function, and dimension on the posterior convergence rate of mixing measure in DPMM remains elusive. In this paper, we studied carefully the effects of these factors under both the well-specified and misspecified settings of convolution DPMM. Our technique relies on establishing fundamental connections between (weighted) Hellinger distance and first order Wasserstein/* metric under various regimes of parameter space, kernel density function, and dimension. To our findings, these connections also provide important insight on posterior convergence behavior of mixing measure arising from various settings of hierarchical Bayesian nonparametric models.
\end{abstract}
\end{center}
\section{Introduction}
\label{sec:introduction}
% {\color{blue} The introduction should just be about Gaussian mixtures and Dirichlet Process Gaussian Mixture Models. We should not talk so much about other things that are irrelevant to the paper.}
From their origin in the work of Pearson~\cite{Pearson-1894}, mixture models have been widely used by statisticians~\cite{McLachlan-Basford-88,Lindsay-95,Mengersen-etal-2001} in variety of modern interdisciplinary domains such as medical science~\cite{Schlattmann-09}, bioinformatics~\cite{bioinfo-appli-mm}, survival analysis~\cite{Survival-MM}, psychometry~\cite{Yuqi-psychometrika} and image classification~\cite{GMM-Image}, to name just a few. The heterogeneity in data populations and associated quantities of interest has inspired the use of a variety of kernels, each with its own advantages and characteristics. Gaussian kernels are particularly popular in various inferential problems, especially those related to density estimation and clustering analysis~\cite{Kotz-etal,Bailey-94, Wasserman-97,Robert-96, Banfield-Raftery-93}. In addition to the choice of kernels, the Bayesian mixture modelers are also guided by the selection of prior distributions for the quantities of interest. In particular, Bayesian nonparametric priors (BNP) for mixture models are increasingly embraced, thanks to computational ease and the modeling flexibility that these rich priors entail~\cite{Escobar-West-95,MacEachern-99}.   

On the theoretical front, convergence rates for (Gaussian) mixture models received extensive treatments in the Bayesian paradigm~\cite{Ghosal-Ghosh-vanderVaart-00, Barron-Shervish-Wasserman, Ghosal-vanderVaart-07}. There have been enormous recent progress on both density estimation and parameter estimation problems. The density estimation problem under Gaussian mixture models with BNP priors was extensively studied by~\cite{Ghosal-vanderVaart-01} who obtained attractive polynomial rates of contraction relative to the Hellinger distance metric. In the parameter estimation problem, the metric of choice is Wasserstein distance, which proved to be a natural tool to analyze the convergence of mixture parameters \cite{Nguyen-13}. Moreover,~\cite{Nguyen-13} showed that the fast rates for density estimation with BNP Gaussian mixtures do not extend themselves to parameter estimation scenarios. Meanwhile, practitioners have employed successfully BNP mixture models, which yield useful estimates for model parameters that provide meaningful information about the data population's heterogeneity. This state of affairs leaves a gap in the theoretical understanding and the practical usage of Bayesian mixture models. In this paper, we aim to bridge this gap by capturing more accurately the heterogenenous behavior in the rates of parameter estimation. We proceed to describe this in further detail.

% While some aspects of such applications focus on determining the number of clusters (or equivalently components), others may draw inference on the estimates of each of the individual components, or even the specific importance of each of those identified components. All of these quantities of interest in principle can be captured by mixing probability measure~\cite{Leroux-92, Ishwaran-James-Sun-01,Figuereido-93}, which encapsulates all that is unknown about the mixing distribution of the data. 

%  Bayesian nonparametrics (BNP)~\cite{Hjort-etal-10}, especially Dirichlet Processes~\cite{Ferguson-73}, allow for a variety of prior distributions at our disposalfor the mixing measure, which allow us a wealth of approaches for inference. The choice of kernels form another user-defined object that controls the inference with mixture models. Smooth density families are popular choices of kernels for nonparametric inference. Families of varying smoothness such as  Laplace or Gaussian kernels have been used extensively for inferential problems related to density estimation and clustering analysis~\cite{Kotz-etal,Bailey-94, Wasserman-97,Robert-96, Banfield-Raftery-93}. These user-defined choices lead to an abundance of unanswered questions, some of which we attempt to address in this work.

\subsection{Gaussian Mixture Models} 
Consider discrete \emph{mixing (probability) measure} $G=\sum_{i=1}^{k} p_i \delta_{\theta_i}$. Here, $\vec{p}=(p_1,\dots,p_k)$ is a vector of mixing weights, while atoms $\{\theta_i\}_{i=1}^{k}$ are elements in a given space $\Theta \subset \mathbb{R}^d$. Here $k$ is used to denote the number of components, which can potentially be infinite. Mixing measure $G$ is combined with a Gaussian kernel with known covariance matrix $\Sigma$, denoted by $f(\cdot|\theta) \sim \mathcal{N}(\theta, \Sigma)$ (to avoid notational cluttering, we remove $\Sigma$ from notation in the remainder of the paper), with respect to the Lebesgue measure $\mu$ to yield a mixture density: 
\begin{eqnarray} 
\label{eq:mixture}
p_{G}(.) :=  \int f(\cdot|\theta)\mathrm{d}G(\theta)=\sum_{i=1}^{k} p_i f(\cdot|\theta_i).
\end{eqnarray}
The atoms $\theta_i$'s are representatives of the underlying subpopulations.  Let $X_1,\ldots, X_n$ be i.i.d. samples from a mixture density $p_{G_0}(x) = \int f(x|\theta) \textrm{d}G_0(\theta)$, where $G_0=\sum_{i=1}^{k_0} p_i^0 \delta_{\theta_i^0}$ is a true but unknown discrete mixing measure with \emph{unknown} number of support points $k_0 \in \mathbb{N} \cup \{\infty\}$. We assume in this work that all the masses $\{p_i^0\}_{i=1}^{k_0}$ are strictly positive and the atoms $\{\theta_i^0: i\leq k_0\}$ are distinct. 

%In this paper, we restrict to the well-specified regime, where $f_0(x|\theta)=f(x|\theta)\sim N(\theta,\Sigma_0)$ is a location-Gaussian kernel with mean $\theta$ and some fixed covariance matrix $\Sigma_0$. To avoid notational complication, we remove $\Sigma_0$ from notation int he remainder of the paper..

 A Bayesian mixture modeler places a prior distribution $\Pi_n$ on a suitable space (specifically, $\overline{\Gcal}(\Theta)$ of discrete measures on $\Theta$). The posterior distribution corresponding to $\Pi_n$, both of which may vary with sample size, can be computed as:
\begin{eqnarray}
\label{eq:Bayes}
\Pi_n(G \in B \bigr|X_{1:n}) =\frac{\int_B \prod_{i=1}^n p_G(X_i) \mathrm{d}\Pi_n(G)}{\int_{\overline{\Gcal}(\Theta)} \prod_{i=1}^n p_G(X_i) \mathrm{d}\Pi_n(G)}.
\end{eqnarray} 

% Examples of suitable priors on the mixing measure $G$ include Dirichlet process mixture model (DPMM) and . 
% The base measure of the prior is denoted by $H$ whose support is in $\Theta$. 

\vspace{0.5 em}
\noindent
\textbf{Dirichlet process Gaussian mixture models:} In the absense of the knowledge of the number of mixture components $k_{0}$, the learning of mixture models is carried out by the use of Bayesian non-parametric (BNP) priors, leading to the \emph{infinite mixture} setting. One of the most popular such priors is the Dirichlet process prior \cite{Antoniak-74}, which uses sample draws from a base measure $H$ to define the random components and weights of the mixture model, leading to the popular \emph{Dirichlet Process Gaussian Mixture Models} (DPGMM)~\cite{Lo-84,Escobar-West-95}. In essence, the Dirichlet process prior places zero probability on mixing measures with a finite number of supporting atoms and enables the addition of more atoms in the supporting set as the number of data points increase. The DPGMM is formulated as follows:
\begin{eqnarray}
\label{eq:DPMM}
G & \sim & \text{DP}(\alpha,H), \nonumber \\
\theta_{1},\ldots,\theta_{n} & \overset{i.i.d.}\sim & G, \nonumber \\
X_{i}|\theta_{i} & \sim & f(X_{i}|\theta_{i}), \quad \forall i=1,\ldots,n, 
\end{eqnarray}
where \text{DP} stands for Dirichlet process, the base measure $H$ is a distribution on $\Theta$, and $\alpha > 0$ is a concentration parameter which controls the rate at which new atoms may be considered, by varying the tail-behavior of mixture weights.  A parametric counterpart of DPGMM is the mixture of finite Gaussian mixtures prior (MFM)~\cite{Miller-2016}, which places all its mass on mixing measures with finite number of supporting atoms.
BNP priors other than DPGMM may have the effects of pushing the atoms away from each other~\cite{Xu-Xie-Repulsive-17} or encouraging the weights of mixture to have a polynomial tail behavior~\cite{deblasi_gibbs}.

The popularity of BNP priors may partially have been promoted due to a misconception that it "automatically" determines the number of components in the posterior inference process. This issue was highlighted by~\cite{Miller-2014}, who demonstrated that Dirichlet Process priors overestimate the true number of components, $k_{0}$, almost surely. 
Subsequent work~\cite{Guha-MTM-19} has provided post-processing techniques to determine $k_{0}$ consistently with Dirichlet Process priors. Their method depends on the knowledge of the parameter contraction rate, with respect to the \emph{Euclidean Wasserstein metric}, i.e., Wasserstein metric with underlying distance metric $\ell_2$, a rate that is extremely slow for the Gaussian kernels \cite{Nguyen-13}.

The inconsistency of estimating $k_{0}$ arises primarily  because Dirichlet priors typically tend to create a large number of extraneous components. While some of these components may be in the neighborhood of the true supports, others may be outliers and in practice, can be easily eliminated from consideration by careful truncation techniques. However, the Euclidean Wasserstein distance treats both the scenarios similarly and in turn yields slow convergence rates for both sets of extraneous atoms. This calls for alternative metrics for investigating parameter estimation rates. In a recent work,~\cite{manole-nhat-icml} argued that Wasserstein metrics capture only the worst-case uniform rates of parameter estimation and therefore can yield extremely slow rates in comparison to the local rates observed in practice, which may vary drastically based on the likelihood curvature in the parameter neighborhood. Employing alternate distance metrics via the use of Voronoi tessallations, they showed that in the finite Gaussian mixture setting with overfitted components (where $\infty>k>k_0$), even though the uniform convergence rates may be slow as $k$ increases, there may still be some atoms which enjoy much faster rates of convergence. 

 The infinite Gaussian mixture setting is generally more challenging to address, (a) since the "true" atoms are not guaranteed to be well-separated, (b) each true atom may be surrounded by potentially infinitely many atoms a posteriori and (c) a posteriori samples can potentially have a significant portion of atomic masses attributed to outlier regions of the parameter space. We argue in this work that in the infinite Gaussian mixture setting, the rates captured by Wasserstein distances for outlier masses are inadequately slow and will demonstrate that with the help of a new suitably defined choice of metric this difficulty can be alleviated.

\subsection{Contribution}
As a primary contribution of this work we study a generalized class of metrics called \emph{Orlicz-Wasserstein} metrics, in the context of parameter estimation arising in infinite mixture models. We show that an in-depth analysis using this metric helps alleviate a number of the concerns attributable to the use of Wasserstein distances for quantifying the rates of parameter convergence arising in infinite Gaussian mixtures. This class of distance metrics generalizes the Wasserstein metric relative to the Orlicz norm using a variety of choices of convex functions. They encompass a very wide range of distances on the space of probability measures, including the Euclidean Wasserstein metrics as a special case. By making appropriate choices of convex functions we can obtain a fast, almost polynomial contraction rates for atomic masses in outlier regions of the parameter space. This is very different from the slow local contraction behavior around the true atoms under the standard Wasserstein metric. This helps us establish informative and useful finer details about the convergence behavior of parameter estimates underlying the usage of Gaussian mixture models in clustering. We believe the usage of Orlicz-Wasserstein metrics for parameter estimation in Dirichlet process Gaussian mixture models opens a new range of directions for future research that aim for developing statistically sound and computationally efficient strategies for posterior sampling with mixture models.

\vspace{0.5 em}
\noindent
\textbf{Organization.} The remainder of the paper is organized as follows. Section \ref{Section:preliminary} provides necessary backgrounds about posterior contraction of parameters in Gaussian mixture models under Wasserstein distances. Section~\ref{sec:orlicz-Wasserstein_distance} introduces \emph{Orlicz-Wasserstein} distances and some of its key properties. Section~\ref{ssection:computation} provides computational approximations to calculating Orlicz-Wasserstein metrics for two mixing measures. Section~\ref{Sec:lower_bound_hellinger} presents exact lower bounds for the Hellinger metric with respect to Orlicz-Wasserstein distances for Gaussian kernels. Section~\ref{ssection:posterior_contraction_gaussian} uses the results in Section~\ref{Sec:lower_bound_hellinger} to provide the key results in the paper with regards to contraction behavior using Orlicz-Wasserstein metrics. Proofs of results are deferred to the Appendices.

\vspace{0.5 em}
\noindent
\textbf{Notation.} For any function $g:\mathcal{X} \to \mathbb{R}$, we denote $\widetilde{g}(\omega)$ as the Fourier transformation of function $g$. Given two densities $p, q$ (with respect to the Lebesgue measure $\mu$), the squared Hellinger distance is given by  $h^{2}(p,q)= {\displaystyle (1/2) \int (\sqrt{p(x)}-\sqrt{q(x)})^{2}\textrm{d}\mu(x)}$. 
For any metric $d$ on $\Theta$, we define the open ball of $d$-radius $\epsilon$ around $\theta_0 \in \Theta$ as $B_d(\epsilon,\theta_0)$. Additionally, the expression $a_{n} \gtrsim b_{n}$ will be used to denote the inequality up to a constant multiple where the value of the constant is independent of $n$. We also denote $a_{n} \asymp b_{n}$ if both $a_{n} \gtrsim b_{n}$ and $a_{n} \lesssim b_{n}$ hold. Furthermore, we denote $A^{c}$ as the complement of set $A$ for any set $A$ while $B(x,r)$ denotes the ball, with respect to the $l_2$ norm, of radius $r > 0$ centered at $x \in \mathbb{R}^{d}$. The expression $D(\epsilon,\mathscr{P},d)$ used in the paper denotes the $\epsilon$-packing number of the space $\mathscr{P}$ relative to the metric $d$. $d$ is replaced by $h$ to denote the hellinger norm. Finally, we use $\text{Diam}(\Theta)= \sup \{\|\theta_1-\theta_2\|: \theta_1,\theta_2 \in \Theta\}$  to denote the diameter of a given parameter space $\Theta$ relative to the $l_{2}$ norm, $\|\cdot\|$, for elements in $\mathbb{R}^{d}$. Regarding the space of mixing measures, let $\Ecal_{k}:=\Ecal_{k}(\Theta)$ and $\Ocal_{k}:=\Ocal_{k}(\Theta)$ respectively denote the space of all mixing measures with exactly and at most $k$ support points, all in $\Theta$. Additionally, denote $\Gcal : = \Gcal(\Theta) = \mathop{\cup} \limits_{k \in \mathbb{N}_{+}}{\Ecal_{k}}$ the set of all discrete measures with finite supports on $\Theta$. $\overline{\Gcal}(\Theta)$ denotes the space of all discrete measures (including those with countably infinite supports) on $\Theta$. Finally, $\mathcal{M}(\Theta)$ stands for the space of all probability measures on $\Theta$.

\section{Posterior contraction under Wasserstein distance} 
\label{Section:preliminary}
Following the work of~\cite{Nguyen-13}, Wasserstein distances have been used to explore parameter estimation rates of mixture models, embodied through their mixing measures. In this section, we outline the basic concepts as follows. Let $\Theta \subset \mathbb{R}^d$. Moreover, define $\mathcal{M}(\Theta)=\{P:P \text{ is a probability measure on } \Theta \}$.
\begin{definition}
\label{def:Wasserstein}
Given $\mu, \nu \in \mathcal{M}(\Theta)$ and the $l_2$ metric $\|\cdot\|$ on $\mathbb{R}^d$,  the Wasserstein distance~\cite{Villani-09} of order $r$ seeks a joint measure $\pi \in \Pi$ minimizing 
\begin{eqnarray}
\label{eq:Kantorovich}
W_r(\mu, \nu) := \left(\inf_{\pi \in \Pi} \int_{ \Theta  \times \Theta} \|\theta_1- \theta_2\|^r d \pi(\theta_1, \theta_2) \right)^{1/r}.
\end{eqnarray}
Here, $\Pi$ is the set of couplings of $\mu$ and $\nu$ denoted by $\Pi=\{\pi: \gamma_\#^1\pi = \mu, \gamma_\#^2 \pi = \nu\}$, where $\gamma^1$, $\gamma^2$ are functions that project onto the first and second coordinates of $\Theta \times \Theta$ respectively. 
\end{definition}
In particular, as shown by~\cite{Nguyen-13}, given two discrete measures $G=\mathop {\sum }_{i=1}^{k}{p_{i}\delta_{\theta_{i}}}$ 
and
$G' = \sum_{i=1}^{k'}p'_i \delta_{\theta'_i}$, a coupling between $\vec{p}$ and
$\vec{p'}$ is a joint distribution $\vec{q}$ on $[1\ldots,k]\times [1,\ldots, k']$, which
is expressed as a matrix
$\vec{q}=(q_{ij})_{1 \leq i \leq k,1\ \leq j \leq k'} \in [0,1]^{k \times k'}$
with marginal probabilities
$\mathop {\sum }_{i=1}^{k}{q_{ij}}=p_{j}'$ and $\mathop {\sum  }{j=1}^{k'}{q_{ij}}=p_{i}$ for any $i=1,2,\ldots,k$ and $j=1,2,\ldots,k'$.
We use $\mathcal{Q}(\vec{p},\vec{p'})$ to denote the space of all such couplings of $\vec{p}$ and $\vec{p'}$. For any $r \geq 1$, the $r$-th order Wasserstein distance between
$G$ and $G'$ is given by
\begin{eqnarray}
W_{r}(G,G') & = & \inf_{\vec{q} \in \mathcal{Q}(\vec{p},\vec{p'})}\biggr ({\mathop {\sum }\limits_{i,j}{q_{ij}\|\theta_{i}-\theta_{j}'\|^{r}}}\biggr )^{1/r}.
\end{eqnarray}
%Convergence of mixing measures in Wasserstein distances is directly linked to convergence of the atoms and masses of the respective measures. In particular, a convergence rate of $\omega_n$ relative to Wasserstein-$r$ distance for mixing measures implies that the corresponding atoms convergence at a rate of at least $\omega_n$ while the associated weights converge at a rate of at least $\omega_n^r$. A higher order of convergence in Wasserstein distances therefore implies a faster rate of convergence for the atomic weights.
~\cite{heinrich-kahn} show that with Gaussian kernels, the minimax rate for estimation is dependent on the number of extra components and goes down as the number of potential components increases, meaning it gets harder to accurately cluster the observations as we have more and more extra components. 
The Gaussian kernel being smooth fits in as many components as possible without changing the mixture density and therefore achieves a very slow parameter contraction rate.  With potentially infinitely many extra components (while using Dirichlet Process priors), rates are even slower. In fact,~\cite{Nguyen-13} shows that for DPGMM with posterior distribution $\Pi_n(\cdot|X_{1:n})$, the following holds true.

{\small
\begin{eqnarray}
\label{eq:infinite_Gaussian_Nguyen}
\Pi_n \biggr(G \in \overline{\mathcal{G}}(\Theta) : W_2(G,G_0) 
		 \lesssim (\log n)^{-1/2} \biggr| 
		X_{1:n}\biggr) \to 1 
\end{eqnarray}
}
in $p_{G_0}$-probability. On the other hand, it has been shown that ordinary-smooth kernels need only a power of $-\log(\epsilon)$ components to approximate an infinite component mixing density upto $\epsilon$- approximation in $\mathbb{L}_q$ distance \cite{Nguyen-13, Fengnan_2016}. Correspondingly, Laplace kernels need a polynomial power of $(1/\epsilon)$ many components for the same degree of approximation. This combined with~\eqref{eq:infinite_Gaussian_Nguyen} suggests that BNP priors use a lot more extra components to fit the true mixture distribution than is necessary, especially with Gaussian kernel. The extra components can potentially arise from two different sources, (i) multiple supporting atoms in the posterior trying to approximate each true atom, (ii) or excessively many outlier atoms in the posterior sample. If condition (ii) is true, this may  potentially have negative consequences for using Gaussian kernels for clustering purposes. From Eq.~\eqref{eq:infinite_Gaussian_Nguyen}, we are only able to conclude that 
\begin{eqnarray}
\label{eq:excess_mass_wasserstein}
    \Pi_n \biggr(G = \sum p_i \delta_{\theta_i} : \sum_j p_j \mathbbm{1}_{\{ \|\theta_j-\theta^0_i\|> \eta \ \forall i \}}   \gtrsim & \log(n)/\eta^2 \biggr|X_{1:n} \biggr) \to 1 \
\end{eqnarray}
which states that masses attributed to outlier atoms (those $>\eta$ distance from any "true" atom) vanish at only a slow logarithmic rate. Clearly, while standard Wasserstein distances are the popular choices of metrics, they do not help differentiate between the sources of extra atoms, and thereby are not useful while discarding outlier atoms. To facilitate this distinction of the source of excess atoms, in this paper we consider a generalisation of standard (Euclidean) Wasserstein metrics called \emph{Orlicz-Wasserstein} distances which allow placement of higher weight penalties on outliers and thereby help to identify outlier atoms better. We proceed in the following sections to describe this in further detail.

% In the \emph{exact-fit} setting (where $k=k_0$ is known to the modeler), under nominal conditions of $H$, it can be established following~\cite{Nguyen-13,Ho-Nguyen-EJS-16} that as $n$ tends to infinity,
% \begin{eqnarray}
% \label{eq:posterior_results_exact_fit}
% \begin{split}
% \Pi\biggr(G \in {\Ecal_{k_0}}(\Theta) : W_1(G,G_0) \gtrsim n^{-1/2} \biggr| X_{1:n}\biggr) \overset{p_{G_0}} \longrightarrow 0. \nonumber
% \end{split}
% \end{eqnarray} 
% The $n^{-1/2}$ rate of posterior contraction is optimal up to a logarithmic term. On the contrary, in the overfit setting, with $k_0 < \bar{K}$, the rate is $\approx n^{-1/4}$ upto a logarithmic term. However, these rates are dependent on the mixing measure $G_0$.~\cite{heinrich-kahn} show that the local minimax rate for the overfitted setting is $\approx n^{-1/(\bar{K}-k_0+2)}$, meaning that it gets harder to accurately cluster the observations as we have more and more extra components.  

% In fact, as~\cite{Fengnan_2016} show supersmooth kernels such as Gaussian kernels need only a power of $-\log(\epsilon)$ components to approximate an infinite component mixing density upto $\epsilon$- approximation in $L_q$ distance. Correspondingly, Laplace kernels need a polynomial power of $(1/\epsilon)$ many components for the same degree of approximation. However, since the knowledge of an upper bound $\bar{K}$ is not always readily available, the alternative approaches using BNP priors with potentially infinitely yield only a $\approx (\log(n))^{-1/2}$ minimax rate for parameter estimation~\cite{Nguyen-13}. 

\section{A generalized metric for contraction of mixing measures}
\label{section:gaussian_excess_mass}
In existing literature thus far, the rates of parameter estimation have been extensively studied with respect to Euclidean Wasserstein distances, in the works of~\cite{Nguyen-13, Ho-Nguyen-EJS-16, Ho-Nguyen-AOS-17, Fengnan_2016, Guha-MTM-19}. As part of this work, we extend such results to the regime of \emph{Orlicz-Wassertein} metrics which take a more careful consideration of the geometry of the parameter space. In that regard, for the sake of completeness, we first introduce the reader to the notion of Orlicz norms and spaces as follows.

\subsection{Orlicz-Wasserstein distance}
\label{sec:orlicz-Wasserstein_distance}
The Orlicz norm is defined as follows~\cite{wellner-orlicz}.
\begin{definition}
\label{definition: orlicz function}
Let $\mu$ be a $\sigma-$finite measure on a space $\mathcal{X}$ with metric $\|\cdot\|$. Assume that $\Phi :[0,\infty) \to [0,\infty)$ be a convex function satisfying:
\begin{enumerate}
    \item[(i)] $\dfrac {\Phi (x)}{x}\to \infty ,\quad {\text{as }}x\to \infty $,
    \item[(ii)] $\dfrac {\Phi (x)}{x}\to 0,\quad {\text{as }}x\to 0$.
\end{enumerate}
Then, the Orlicz space is defined as follows:
\begin{eqnarray}
  L_{\Phi} :=\big\{f: \mathcal{X} \rightarrow \mathbb{R} | \ \exists \ \lambda \in \mathbb{R}^{+} \text{ s.t.} \int _{X}\Phi (\|f(x)\|/\lambda)\,d\mu(x) \leq 1\big\}.
\end{eqnarray}
Moreover, the Orlicz norm corresponding to $ f \in L_{\Phi}$ is given by:
\begin{eqnarray}
     \|f\|_{\Phi}:= \inf\{\lambda \in \mathbb{R}^{+}: \int_{X}\Phi (\|f(x)\|/\lambda)\,d\mu(x) \leq 1\}.
\end{eqnarray}
\end{definition}

Without loss of generalisation, we will assume $\mathcal{X} = \mathbb{R}^d$, with $\|\cdot\|$ denoting the standard Euclidean metric. Notice that when $\Phi(x)=x^p$ with $p \geq 1$, the Orlicz norm, $\|f\|_{\Phi}$ is the same as the $\mathbb{L}_p$-norm. In this sense, the Orlicz norm generalizes the concept of $\mathbb{L}_p$-norm for $p \geq 1$. Recall that, a coupling between two probability measures $\nu_1$ and $\nu_2$ on $\mathbb{R}^d$ is a joint distribution on $\mathbb{R}^d \times \mathbb{R}^d$ with corresponding marginal distributions $\nu_1$ and $\nu_2$. Corresponding to the Orlicz norms, we define the \emph{Orlicz-Wasserstein metric} which generalizes the $W_r$-metric as follows. 
\begin{definition}
\label{def:Orlic-Wasserstein}
Let $\nu_1,\nu_2$ be probability measures on $(\mathbb{R}^d,\|\cdot\|)$. Assume that $\Phi:[0,\infty) \to [0,\infty)$ is a convex function satisfying conditions (i) and (ii) in Definition~\ref{definition: orlicz function}. We define the \emph{Orlicz-Wasserstein} distance between $\nu_{1}$ and $\nu_{2}$ as follows:
\begin{eqnarray}
\label{eq:orlicz-Wasserstein}
     W_{\Phi}(\nu_1,\nu_2):= \inf_{\nu \in \mathcal{Q}(\nu_1,\nu_2)} \inf \{\lambda \in \mathbb{R}^{+}:  
     \int _{\mathbb{R}^d \times \mathbb{R}^d}\Phi (\|x-y\|/\lambda)\,d\nu(x,y) \leq 1\},
\end{eqnarray}
where $\mathcal{Q}(\nu_1,\nu_2)$ is the set of all possible couplings of $\nu_1$ and $\nu_2$.
\end{definition}

Orlicz Wasserstein distances have been briefly introduced in the works of~\cite{Kell-orlicz-wasserstein,Sturm-orlicz}, however, the utility of the metrics for contraction properties of parameter estimation has remained hitherto unexplored. Also, following Lemma 3.1 of~\cite{Sturm-orlicz}, we see under some minor regularity conditions, for every $\Phi,\nu_1,\nu_2$, there exists $\lambda_{\min}$ and $\nu_{\text{opt}}$ such that $\lambda_{\min}= W_{\Phi}(\nu_1,\nu_2)$ and $\int _{\mathbb{R}^d \times \mathbb{R}^d}\Phi (\|x-y\|/\lambda_{\min})\,d\nu_{\text{opt}}(x,y) = 1$. This combined with Fubini's theorem establishes the equivalence of the definitions in this work and those of~\cite{Sturm-orlicz,Kell-orlicz-wasserstein}.

\noindent
Note that when $\Phi(x)=x^r$ for $r \geq 1$, then $W_{\Phi}(\nu_1,\nu_2)=W_r(\nu_1,\nu_2)$, the usual Wasserstein distance of order $r$ between $\nu_{1}$ and $\nu_{2}$. The following lemma demonstrates that Orlicz-Wasserstein defines a proper metric on $(\mathbb{R}^d,\|\cdot\|)$.
\begin{lemma}
\label{lemma: orlicz_metric}
The Orlicz-Wasserstein $W_{\Phi}$ is a distance metric on the set of probability measures on $(\mathbb{R}^d,\|\cdot\|)$, namely, it is symmetric and satisfies the identity and triangle inequality properties.
\end{lemma}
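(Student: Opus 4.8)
The plan is to verify the defining properties of a metric one at a time: nonnegativity and symmetry (both essentially formal), the identity of indiscernibles, and the triangle inequality, which is the only step with real content. Nonnegativity is built into Definition~\ref{def:Orlic-Wasserstein} since the infimum runs over $\lambda \in \mathbb{R}^+$. For symmetry, I would note that the push-forward of a coupling $\nu(x,y)$ under the swap map $(x,y)\mapsto(y,x)$ is a bijection between $\mathcal{Q}(\nu_1,\nu_2)$ and $\mathcal{Q}(\nu_2,\nu_1)$, and that $\|x-y\| = \|y-x\|$ makes $\int \Phi(\|x-y\|/\lambda)\,d\nu$ invariant under this map; hence the two infima defining $W_\Phi(\nu_1,\nu_2)$ and $W_\Phi(\nu_2,\nu_1)$ agree.

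Before the remaining two properties I would record two elementary consequences of (i)--(ii): that $\Phi(0)=0$ (the standard normalization, also forced by $\Phi(x)/x\to 0$ together with the fact that $\|f\|_\Phi$ is to be a genuine norm), and that $\Phi$ is nondecreasing on $[0,\infty)$ --- indeed $x\mapsto \Phi(x)/x$ is nondecreasing by convexity and tends to $0$ at $0$ by (ii), so $\Phi(x)=x\cdot(\Phi(x)/x)$ is a product of nonnegative nondecreasing functions. Then for the easy direction of the identity, when $\nu_1=\nu_2$ the coupling concentrated on the diagonal $\{(x,x):x\in\mathbb{R}^d\}$ makes $\|x-y\|\equiv 0$ on its support, so $\int \Phi(\|x-y\|/\lambda)\,d\nu = \Phi(0)=0\le 1$ for every $\lambda>0$, whence $W_\Phi(\nu_1,\nu_1)=0$. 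For the converse, suppose $W_\Phi(\nu_1,\nu_2)=0$: for each $n$ there are $\pi_n\in\mathcal{Q}(\nu_1,\nu_2)$ and $\lambda_n\le 1/n$ with $\int\Phi(\|x-y\|/\lambda_n)\,d\pi_n\le 1$. A Chebyshev-type bound using monotonicity of $\Phi$ gives $\pi_n(\{\|x-y\|\ge t\})\le 1/\Phi(t/\lambda_n)$ for every $t>0$, and since $t/\lambda_n\to\infty$ and $\Phi(s)\to\infty$ by (i), this probability tends to $0$ for each fixed $t$. Testing any bounded $L$-Lipschitz $f:\mathbb{R}^d\to\mathbb{R}$ then yields $|\int f\,d\nu_1-\int f\,d\nu_2| = |\int (f(x)-f(y))\,d\pi_n| \le Lt + 2\|f\|_\infty\,\pi_n(\{\|x-y\|\ge t\})$; sending $n\to\infty$ and then $t\to 0$ forces equality of the integrals for all such $f$, hence $\nu_1=\nu_2$.

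For the triangle inequality, fix $\nu_1,\nu_2,\nu_3$; we may assume $W_\Phi(\nu_1,\nu_2)$ and $W_\Phi(\nu_2,\nu_3)$ are finite and positive, since otherwise the inequality is immediate. Given $\epsilon>0$, pick couplings $\pi_{12}\in\mathcal{Q}(\nu_1,\nu_2)$, $\pi_{23}\in\mathcal{Q}(\nu_2,\nu_3)$ and scalars $\lambda_{12}\le W_\Phi(\nu_1,\nu_2)+\epsilon$, $\lambda_{23}\le W_\Phi(\nu_2,\nu_3)+\epsilon$ with $\int\Phi(\|x-y\|/\lambda_{12})\,d\pi_{12}\le 1$ and $\int\Phi(\|y-z\|/\lambda_{23})\,d\pi_{23}\le 1$. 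Invoke the gluing lemma on the Polish space $\mathbb{R}^d$ to build a measure $\pi$ on $\mathbb{R}^d\times\mathbb{R}^d\times\mathbb{R}^d$ whose $(x,y)$-marginal is $\pi_{12}$ and whose $(y,z)$-marginal is $\pi_{23}$, and let $\pi_{13}$ be its $(x,z)$-marginal, a coupling of $\nu_1$ and $\nu_3$. Writing $\lambda=\lambda_{12}+\lambda_{23}$ and $\theta=\lambda_{12}/\lambda$, monotonicity of $\Phi$, the triangle inequality $\|x-z\|\le\|x-y\|+\|y-z\|$, and convexity of $\Phi$ give pointwise
\[
\Phi\!\left(\frac{\|x-z\|}{\lambda}\right)\le \Phi\!\left(\theta\,\frac{\|x-y\|}{\lambda_{12}}+(1-\theta)\,\frac{\|y-z\|}{\lambda_{23}}\right)\le \theta\,\Phi\!\left(\frac{\|x-y\|}{\lambda_{12}}\right)+(1-\theta)\,\Phi\!\left(\frac{\|y-z\|}{\lambda_{23}}\right).
\]
Integrating against $\pi$ and using the two marginal identifications bounds $\int\Phi(\|x-z\|/\lambda)\,d\pi_{13}$ by $\theta\cdot 1+(1-\theta)\cdot 1=1$, so $W_\Phi(\nu_1,\nu_3)\le\lambda = \lambda_{12}+\lambda_{23}\le W_\Phi(\nu_1,\nu_2)+W_\Phi(\nu_2,\nu_3)+2\epsilon$, and $\epsilon\downarrow 0$ finishes.

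The only genuine obstacle is the measure-theoretic gluing step (existence of $\pi$ with the prescribed two-dimensional marginals), but since $\mathbb{R}^d$ is Polish the standard disintegration argument applies verbatim; the rest of the triangle inequality is exactly the convexity estimate displayed above, which is precisely the point at which the Orlicz structure (convex $\Phi$, reweighting by $\lambda_{12}/\lambda$ and $\lambda_{23}/\lambda$) does the work that in the classical $W_r$ case is done by Minkowski's inequality.
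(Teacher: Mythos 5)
Your proof is correct, and for the heart of the lemma---the triangle inequality---it follows exactly the same route as the paper: disintegrate/glue the two near-optimal couplings into a measure on $\mathbb{R}^d\times\mathbb{R}^d\times\mathbb{R}^d$, apply the metric triangle inequality for $\|\cdot\|$, and then the convexity of $\Phi$ with the weights $\lambda_{12}/(\lambda_{12}+\lambda_{23})$ and $\lambda_{23}/(\lambda_{12}+\lambda_{23})$; the paper's proof is the same calculation with $k_1,k_2$ in place of your $\lambda_{12},\lambda_{23}$. Two small remarks are worth making. First, the paper works directly with $k_1=W_\Phi(\nu_1,\nu_3)$ and $k_2=W_\Phi(\nu_3,\nu_2)$, implicitly assuming the defining infimum is attained (it cites Villani for the attainment), whereas your $\epsilon$-approximation argument avoids that and is self-contained; for it to go through you do need the observation that $\lambda\mapsto\int\Phi(\|x-y\|/\lambda)\,d\pi$ is non-increasing (a consequence of $\Phi$ nondecreasing), which you record, so the step is sound. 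Second, you prove more than the paper does: the paper's proof of the ``identity'' property only establishes $W_\Phi(\mu,\mu)=0$ via the diagonal coupling (and even there has a small typo, writing ``$=1$'' where it should be $\Phi(0)=0\le 1$), while you also prove the converse implication $W_\Phi(\nu_1,\nu_2)=0\Rightarrow\nu_1=\nu_2$ via the Chebyshev bound $\pi_n(\|x-y\|\ge t)\le 1/\Phi(t/\lambda_n)$ and testing against bounded Lipschitz functions. That converse is needed for $W_\Phi$ to be a genuine metric rather than a pseudometric, so your argument actually fills a gap the paper leaves open.
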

\noindent
The proof of Lemma~\ref{lemma: orlicz_metric} is in Appendix~\ref{subsec:proof:lemma: orlicz_metric}. The notion of Orlicz-Wasserstein distance may encompass a  stronger notion of metrics than that of the usual Wasserstein distance to compare probability measures as evidenced by the following lemma.

\begin{lemma}
\label{lemma: orlicz properties}
Let $\nu_1,\nu_2$ be probability measures on $(\mathbb{R}^d,\|\cdot\|)$. Also assume $\Phi,\Psi$ are convex functions satisfying conditions (i) and (ii) in Definition~\ref{definition: orlicz function}. Suppose that for all $x>0$, $\Phi(x) \leq \Psi(x)$. Then, we have 
\begin{eqnarray}
W_{\Phi}(\nu_1,\nu_2) \leq W_{\Psi}(\nu_1,\nu_2) \nonumber.
\end{eqnarray}
\end{lemma}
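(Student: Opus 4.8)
The plan is to work directly with the feasibility sets appearing in the inner infimum of Definition~\ref{def:Orlic-Wasserstein}, bypassing any appeal to attainment of the optimal $\lambda$. Fix a coupling $\nu \in \mathcal{Q}(\nu_1,\nu_2)$ and, for a convex function $F \in \{\Phi,\Psi\}$, set
\[
\Lambda_F(\nu) := \Big\{\lambda \in \mathbb{R}^{+} : \int_{\mathbb{R}^d \times \mathbb{R}^d} F(\|x-y\|/\lambda)\, d\nu(x,y) \le 1\Big\},
\]
so that, by Definition~\ref{def:Orlic-Wasserstein}, $W_F(\nu_1,\nu_2) = \inf_{\nu \in \mathcal{Q}(\nu_1,\nu_2)} \inf \Lambda_F(\nu)$ with the convention $\inf \emptyset = +\infty$.

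First I would note that conditions (i)--(ii) in Definition~\ref{definition: orlicz function} force $\Phi(0) = \Psi(0) = 0$ (a nonnegative convex function that is not constantly zero near the origin cannot satisfy (ii) unless it vanishes at $0$), so the hypothesis $\Phi(x) \le \Psi(x)$ for $x > 0$ extends to all of $[0,\infty)$. Consequently, for any $\lambda \in \mathbb{R}^{+}$, monotonicity of the integral gives
\[
\int_{\mathbb{R}^d \times \mathbb{R}^d} \Phi(\|x-y\|/\lambda)\, d\nu(x,y) \le \int_{\mathbb{R}^d \times \mathbb{R}^d} \Psi(\|x-y\|/\lambda)\, d\nu(x,y),
\]
which shows $\Lambda_\Psi(\nu) \subseteq \Lambda_\Phi(\nu)$: every $\lambda$ feasible for $\Psi$ is also feasible for $\Phi$.

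The second step is purely order-theoretic: an inclusion of subsets of $\mathbb{R}^{+}$ reverses under $\inf$, hence $\inf \Lambda_\Phi(\nu) \le \inf \Lambda_\Psi(\nu)$ for every fixed coupling $\nu$ (this also covers the degenerate case $\Lambda_\Psi(\nu) = \emptyset$, where the right-hand side is $+\infty$). Taking the infimum over $\nu \in \mathcal{Q}(\nu_1,\nu_2)$ — the same index set on both sides — then yields $W_\Phi(\nu_1,\nu_2) \le W_\Psi(\nu_1,\nu_2)$, as claimed.

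Since the argument manipulates only infima and the pointwise ordering of $\Phi$ and $\Psi$, convexity and the growth conditions (i)--(ii) play no essential role beyond pinning down $\Phi(0) = \Psi(0) = 0$, and nothing is gained by invoking the attainment of the optimal $\lambda$ recalled after Definition~\ref{def:Orlic-Wasserstein} (via Lemma 3.1 of~\cite{Sturm-orlicz}). The only point requiring a moment's care is the bookkeeping of the $\inf \emptyset = +\infty$ convention, so that the statement remains correct when $W_\Psi(\nu_1,\nu_2) = +\infty$; there is no genuine obstacle here.
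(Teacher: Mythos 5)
Your proof is correct and follows essentially the same route as the paper's: fix a coupling, observe that the pointwise bound $\Phi \le \Psi$ makes every $\lambda$ feasible for $\Psi$ also feasible for $\Phi$, so the feasibility set for $\Psi$ is contained in that for $\Phi$, and the infimum over the smaller set dominates; the inequality then passes to the infimum over couplings. Your added remarks on $\Phi(0)=\Psi(0)=0$ and the $\inf\emptyset=+\infty$ convention are harmless tidiness beyond what the paper records.
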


The proof of Lemma~\ref{lemma: orlicz properties} is in Appendix~\ref{subsec:proof:lemma: orlicz properties}. Note that the supremum of convex functions is also a convex function. Therefore, as a corollary to the above lemma we obtain the following inequality.
\begin{corollary}
Let $\Phi_1(\cdot)$ be a polynomial convex function and $\Phi_2(\cdot)$ an exponential convex function. $\Psi$ is the supremum of $\Phi_1(\cdot)$ and $\Phi_2(\cdot)$. Then the following holds, for any $G,G'$, $1>\alpha>0$.
\begin{eqnarray}
    W_{\Psi}(G,G')  &\geq& W_{\alpha\Phi_1 +(1-\alpha) \Phi_2}(G,G')  \\ &\geq& \alpha W_{\Phi_1}(G,G') +(1-\alpha) W_{\Phi_2}(G,G') \nonumber
\end{eqnarray}
\end{corollary}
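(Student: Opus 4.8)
The statement is a chain of two inequalities, and I would treat them separately. The first step, for either one, is a bookkeeping check: both $\Psi=\sup\{\Phi_1,\Phi_2\}$ and $\Phi_\alpha:=\alpha\Phi_1+(1-\alpha)\Phi_2$ must be admissible Orlicz functions in the sense of Definition~\ref{definition: orlicz function}. Convexity is inherited (a pointwise supremum, resp. a nonnegative combination, of convex functions is convex), and conditions (i)--(ii) pass through because $\Psi(x)/x=\max_i\Phi_i(x)/x$ and $\Phi_\alpha(x)/x=\alpha\Phi_1(x)/x+(1-\alpha)\Phi_2(x)/x$ both tend to $\infty$ as $x\to\infty$ and to $0$ as $x\to0$. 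The first inequality $W_\Psi(G,G')\ge W_{\Phi_\alpha}(G,G')$ is then immediate from Lemma~\ref{lemma: orlicz properties}: a convex combination never exceeds the maximum, so $\Phi_\alpha(x)\le\max\{\Phi_1(x),\Phi_2(x)\}=\Psi(x)$ for all $x\ge0$, and the monotonicity of $W_{(\cdot)}$ applied to the pair $(\Phi_\alpha,\Psi)$ gives the claim.

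For the second inequality I would first isolate a scaling fact: for any Orlicz function $\Phi$ (convex, $\Phi(0)=0$) and any $c\in(0,1]$ one has $W_{c\Phi}(G,G')\ge c\,W_\Phi(G,G')$. Indeed, for a fixed coupling $\nu$ and $Z=\|x-y\|$, convexity with $\Phi(0)=0$ gives $\Phi(z/c)\ge(1/c)\Phi(z)$ whenever $1/c\ge1$; so if $\lambda$ is the value (existing by the Sturm-type regularity already invoked in Section~\ref{sec:orlicz-Wasserstein_distance}) with $\int\Phi(Z/\lambda)\,d\nu=1$, then $\int\Phi(Z/(c\lambda))\,d\nu\ge1/c$, and since $\lambda'\mapsto\int\Phi(Z/\lambda')\,d\nu$ is non-increasing, the radius $\lambda_c$ realizing $\int(c\Phi)(Z/\lambda_c)\,d\nu=1$ satisfies $\lambda_c\ge c\lambda$; infimizing over couplings yields the fact. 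Since $\Phi_\alpha\ge\alpha\Phi_1$ and $\Phi_\alpha\ge(1-\alpha)\Phi_2$ pointwise, Lemma~\ref{lemma: orlicz properties} combined with this scaling fact already delivers the two one-sided bounds $W_{\Phi_\alpha}(G,G')\ge\alpha W_{\Phi_1}(G,G')$ and $W_{\Phi_\alpha}(G,G')\ge(1-\alpha)W_{\Phi_2}(G,G')$. To upgrade these to the sum, I would pass to the optimal coupling $\nu^*$ and optimal radius $\lambda^*=W_{\Phi_\alpha}(G,G')$, so that $\alpha a+(1-\alpha)b=1$ with $a:=\int\Phi_1(\|x-y\|/\lambda^*)\,d\nu^*$ and $b:=\int\Phi_2(\|x-y\|/\lambda^*)\,d\nu^*$; using $\nu^*$ as a (generally suboptimal) coupling for $W_{\Phi_1}$ and $W_{\Phi_2}$ together with the reverse scaling $\Phi_i(z/c)\le(1/c)\Phi_i(z)$ for $c\ge1$, one gets $W_{\Phi_1}(G,G')\le\max\{a,1\}\lambda^*$ and $W_{\Phi_2}(G,G')\le\max\{b,1\}\lambda^*$, and then the identity $\alpha a+(1-\alpha)b=1$ should be used to collapse the right-hand side of $\alpha W_{\Phi_1}+(1-\alpha)W_{\Phi_2}$ down to $\lambda^*$.

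The hard part is precisely this last recombination. The crude scaling bounds only give $\alpha W_{\Phi_1}+(1-\alpha)W_{\Phi_2}\le[\alpha\max\{a,1\}+(1-\alpha)\max\{b,1\}]\lambda^*$, and when $a<1<b$ this bracket equals $1+\alpha(1-a)$, which exceeds $1$; closing the gap needs a finer control forcing $a$ and $b$ to stay close to $1$ on the optimal coupling, and this is where the structural hypotheses on $\Phi_1,\Phi_2$ must be used. For the polynomial case $\Phi_1(x)=x^p$ the Luxemburg norm of $Z$ under $\nu^*$ is exactly $(\int Z^p\,d\nu^*)^{1/p}=a^{1/p}\lambda^*\le a\lambda^*$ when $a\ge1$, so the polynomial side is automatically tight; one then has to control the exponential term $b$ using that an exponential $\Phi_2$ dominates every power beyond a fixed threshold, which bounds how large $b$ can be relative to the mass it carries. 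I expect balancing these two contributions to be the crux of the argument, with the remaining manipulations being routine unwinding of the definitions of $W_\Phi$.
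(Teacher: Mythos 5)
Your handling of the first inequality coincides with everything the paper itself offers: the text merely notes that $\alpha\Phi_1+(1-\alpha)\Phi_2\le\sup\{\Phi_1,\Phi_2\}=\Psi$ pointwise and invokes the monotonicity Lemma~\ref{lemma: orlicz properties}, so there is no hidden argument for the second inequality that you failed to reproduce. Your scaling fact $W_{c\Phi}\ge c\,W_{\Phi}$ for $c\in(0,1]$ is correct and yields the one-sided bounds $W_{\Phi_\alpha}\ge\max\{\alpha W_{\Phi_1},(1-\alpha)W_{\Phi_2}\}$. But the ``recombination'' step you flag as the crux is a genuine gap, and it cannot be closed as you hope: the inequality $W_{\alpha\Phi_1+(1-\alpha)\Phi_2}\ge\alpha W_{\Phi_1}+(1-\alpha)W_{\Phi_2}$ is false for some admissible polynomial/exponential pairs, so no amount of finer control of your quantities $a$ and $b$ on the optimal coupling will rescue it in this generality.

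To see this, take $G=\delta_0$ and $G'=\delta_z$ in $\mathbb{R}$. The unique coupling gives $W_\Phi(G,G')=z/t_\Phi$ with $\Phi(t_\Phi)=1$, so the second inequality reduces to $t_{\Phi_\alpha}\le h$, where $h=\bigl(\alpha/t_{\Phi_1}+(1-\alpha)/t_{\Phi_2}\bigr)^{-1}$ is the weighted harmonic mean, i.e.\ to $\alpha\Phi_1(h)+(1-\alpha)\Phi_2(h)\ge 1$. Now take $\alpha=1/2$, $\Phi_1(x)=x^{1.01}$ (so $t_{\Phi_1}=1$) and $\Phi_2(x)=\exp\bigl((cx)^{1.01}\bigr)-1$ with $c$ chosen so that $t_{\Phi_2}=2$; both are convex and satisfy conditions (i)--(ii) of Definition~\ref{definition: orlicz function}, one polynomial and one exponential. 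Then $h=4/3$ and a direct computation gives
\begin{equation*}
\tfrac12\Phi_1(4/3)+\tfrac12\Phi_2(4/3)\approx \tfrac12\,(1.337+0.584)\approx 0.961<1,
\end{equation*}
so $t_{\Phi_\alpha}>h$ and $W_{\Phi_\alpha}(G,G')<\alpha W_{\Phi_1}(G,G')+(1-\alpha)W_{\Phi_2}(G,G')$. The intuition is exactly the failure mode you identified: on the scale where $\Phi_1$ integrates to about $1$, the more convex $\Phi_2$ integrates to well below $1$, and convexity of $\Phi_1$ alone cannot compensate. The statement therefore needs either extra hypotheses (e.g.\ normalization $\Phi_1^{-1}(1)=\Phi_2^{-1}(1)$, under which $h=t_{\Phi_1}=t_{\Phi_2}$ and the two-point case degenerates to an equality) or should be weakened to the maximum bound you actually proved.
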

An important property of the Wasserstein distances is that if one mixing measure is close to another in Wasserstein distance, it provides a way to control the corresponding contraction rates of the atoms and the masses associated with them. The following lemma provides a similar result for Orlicz-Wasserstein norms.
\begin{lemma}
\label{lemma: orlicz_bound}
Let $G_0=\sum_{i=1}^{k_0} p^0_i\delta_{\theta^0_i}$, $G=\sum_{i=1}^k p_i\delta_{\theta_i}$ be mixing measures such that $\theta_j,\theta^0_i \in \mathbb{R}^d$ for all $i,j$. Assume that $\Phi:[0,\infty) \to [0,\infty)$ is a convex function satisfying conditions (i) and (ii) in Definition~\ref{definition: orlicz function}. Then 
\begin{eqnarray}
 \sum_j p_j\mathbbm{1}_{\{\|\theta_j-\theta^0_i\|>\eta \text{ for all } i\}}\leq \left( \Phi\left(\dfrac{\eta}{W_{\Phi}(G,G_0)}\right)\right)^{-1}.
\end{eqnarray}
Here, $k_0,k$ can also take the value $\infty$.
\end{lemma}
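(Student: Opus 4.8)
The plan is to read the inequality off the definition of $W_{\Phi}$ using little more than the monotonicity and continuity of $\Phi$. Write $\lambda_{\star}:=W_{\Phi}(G,G_0)$ and $S:=\{j:\|\theta_j-\theta^0_i\|>\eta\text{ for all }i\}$, so that the left-hand side is exactly $\sum_{j\in S}p_j$. First I would clear away the degenerate cases: if $\lambda_{\star}=0$ then $G=G_0$ by Lemma~\ref{lemma: orlicz_metric}, hence every $\theta_j$ equals some $\theta^0_i$, $S=\emptyset$, and the claim is trivial; and if $\Phi(\eta/\lambda_{\star})=0$ (which subsumes $\lambda_{\star}=\infty$ under the convention $\eta/\infty=0$, and can really happen because $\Phi$ may vanish on an initial interval) the right-hand side is $+\infty$. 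So assume $0<\lambda_{\star}<\infty$ and $\Phi(\eta/\lambda_{\star})>0$. I would also record that, by convexity together with condition (ii) (which forces $\Phi(x)\to0$ as $x\downarrow0$), $\Phi$ is nondecreasing and continuous on $(0,\infty)$.

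The core step is a one-line averaging estimate. Fix $\epsilon>0$; by Definition~\ref{def:Orlic-Wasserstein} choose a coupling $\nu$ of $G$ and $G_0$ and a scalar $\lambda$ with $\lambda_{\star}\le\lambda\le\lambda_{\star}+\epsilon$ and $\int_{\mathbb{R}^d\times\mathbb{R}^d}\Phi(\|x-y\|/\lambda)\,d\nu(x,y)\le1$. As $G$ and $G_0$ are discrete, $\nu$ lives on $\mathrm{supp}(G)\times\mathrm{supp}(G_0)$ and is a coupling matrix $(q_{ji})$ with $\sum_i q_{ji}=p_j$ and $\sum_j q_{ji}=p^0_i$, so the integral is the nonnegative double sum $\sum_{j,i}q_{ji}\,\Phi(\|\theta_j-\theta^0_i\|/\lambda)$ (absolutely convergent even when $k$ or $k_0$ equals $\infty$). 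For $j\in S$ and every $i$, $\|\theta_j-\theta^0_i\|/\lambda>\eta/\lambda\ge\eta/(\lambda_{\star}+\epsilon)$, so monotonicity of $\Phi$ gives $\Phi(\|\theta_j-\theta^0_i\|/\lambda)\ge\Phi(\eta/(\lambda_{\star}+\epsilon))$, whence
\begin{equation*}
1\;\ge\;\sum_{j,i}q_{ji}\,\Phi\!\left(\frac{\|\theta_j-\theta^0_i\|}{\lambda}\right)\;\ge\;\Phi\!\left(\frac{\eta}{\lambda_{\star}+\epsilon}\right)\sum_{j\in S}\sum_i q_{ji}\;=\;\Phi\!\left(\frac{\eta}{\lambda_{\star}+\epsilon}\right)\sum_{j\in S}p_j .
\end{equation*}
Since $\eta/(\lambda_{\star}+\epsilon)\uparrow\eta/\lambda_{\star}$ and $\Phi$ is continuous, nondecreasing, and positive at $\eta/\lambda_{\star}$, the factor $\Phi(\eta/(\lambda_{\star}+\epsilon))$ is positive for all small $\epsilon$, so dividing yields $\sum_{j\in S}p_j\le1/\Phi(\eta/(\lambda_{\star}+\epsilon))$.

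Finally I would let $\epsilon\downarrow0$: by continuity and monotonicity of $\Phi$ the right-hand side decreases to $1/\Phi(\eta/\lambda_{\star})=\bigl(\Phi(\eta/W_{\Phi}(G,G_0))\bigr)^{-1}$, while the left-hand side does not move, which is the asserted bound. (Alternatively, using the optimal pair $\lambda_{\min}=W_{\Phi}(G,G_0)$ and $\nu_{\mathrm{opt}}$ with $\int\Phi(\|x-y\|/\lambda_{\min})\,d\nu_{\mathrm{opt}}=1$ noted after Definition~\ref{def:Orlic-Wasserstein}, one runs the same averaging with $\lambda=\lambda_{\star}$ and avoids the limit altogether.) The only point requiring real care is the bookkeeping around the possible degeneracy of $\Phi$ near the origin — it need not be strictly positive or strictly increasing there — which is handled by isolating the trivial cases up front; beyond that, the estimate is the single monotonicity-plus-averaging line above, and I foresee no substantive obstacle.
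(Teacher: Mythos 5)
Your proof is correct and follows essentially the same route as the paper's: for any near‑admissible coupling, lower‑bound the integral $\sum_{j,i} q_{ji}\Phi(\|\theta_j-\theta^0_i\|/\lambda)$ by $\Phi(\eta/\lambda)\sum_{j\in S}p_j$ using monotonicity of $\Phi$, then rearrange against the constraint $\le 1$. The paper organizes this by defining an auxiliary infimum $K$ and inverting $\Phi$, whereas you work with a near‑minimizing $\lambda\le\lambda_\star+\epsilon$ and pass to the limit; this is a cosmetic difference, and your up‑front treatment of the degenerate cases where $\Phi(\eta/\lambda_\star)=0$ is a welcome extra bit of care.
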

\noindent
The proof of Lemma~\ref{lemma: orlicz_bound} is in Appendix~\ref{subsec:proof:lemma: orlicz_bound}. Lemma~\ref{lemma: orlicz_bound} allows us to identify the amount of mass transferred over large distances, when the mass transfer occurs between two measures $G$ and $G_0$. Note that the constraint on $\Phi$ is very minimal, thereby lending flexibility to the result. Since operations like supremums of convex functions or compositions of a convex function with a non-decreasing convex function (this is the outer function), also yield convex functions, Lemma~\ref{lemma: orlicz_bound} is a standalone result of interest as a generalisation of Bernstein/Hoeffding type inequalities for mixing measures.
\subsection{Lower bound of Hellinger distance based on Orlicz-Wasserstein metric}
\label{Sec:lower_bound_hellinger}
In the previous section, we state results to control the cost of mass transfer attributable to large transportation distances using Orlicz-Wasserstein distances. This is an important result in understanding the contraction behaviors of support points in the outlier regions of parameter space. Traditionally, contraction behavior has been extensively studied~\cite{Ghosal-vanderVaart-01} in the regime of mixture densities $p_G$. The following results help us connect our understanding of posterior contraction on space of mixture densities to that of mixing measure, relative to that of Orlicz-Wasserstein distances. This is stated as follows in the next theorem.
%Here and henceforth, the stated results in the paper are for $\theta \in \Theta = [-\bar{\theta},\bar{\theta}]^{d}$ , i.e., $f(x|\theta) = f(x - \theta) \sim \mathcal{N}_d(\theta,\Sigma)$.
\begin{theorem}
\label{theorem: gaussian_excess}
 Let $\Phi$ be a convex function satisfying conditions (i) and (ii) in Definition~\ref{definition: orlicz function} such that $\Phi(x) \leq \exp(x^{\beta})-1$ for some $16/15 >\beta>1$. Then, as $\Theta = [-\bar{\theta},\bar{\theta}]^{d}$, for any mixing measures $G,G'$, we have
\begin{eqnarray}
\label{eq: orlicz-Wasserstein-hellinger}
 W_{\Phi}(G,G') \lesssim  C \biggr( \dfrac{ \bar{\theta}^{5/4}}{(\log(1/h(p_{G},p_{G'})))^{1/8}}  + \left(\dfrac{1}{\log(1/h(p_{G},p_{G'}) )}\right)^{11/8} \\  +    \left(\dfrac{1}{\log(c/h(p_{G},p_{G'})(\log(1/h(p_{G},p_{G'})))^{d/4})}\right)^{1/2}\biggr)
 \end{eqnarray}
 for some constant $C$ dependent on the dimension and known covariance matrix.
\end{theorem}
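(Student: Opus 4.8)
Our starting point is Lemma~\ref{lemma: orlicz properties}: since $\Phi(x)\le \exp(x^{\beta})-1=:\Psi(x)$ for all $x>0$ and both functions are convex and satisfy (i)--(ii), it suffices to bound $W_{\Psi}(G,G')$. The plan is to (a) convert the Hellinger proximity of $p_{G}$ and $p_{G'}$ into a quantitative statement about how much mass of $G$ must be transported a large distance to reach $G'$, and then (b) feed that ``mass-versus-radius'' profile into the definition of $W_{\Psi}$ via the series expansion of $\exp(x^{\beta})-1$.

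For step (a) I would argue through characteristic functions, as is standard for super-smooth (Gaussian) deconvolution. Using $\|p_{G}-p_{G'}\|_{1}\lesssim h(p_{G},p_{G'})$ together with the uniform bound $\|p_{G}\|_{\infty},\|p_{G'}\|_{\infty}\lesssim_{d,\Sigma}1$ gives $\|p_{G}-p_{G'}\|_{2}\lesssim \sqrt{h(p_{G},p_{G'})}$, hence by Plancherel the same bound for $\widetilde{p_{G}}-\widetilde{p_{G'}}$. Since $\widetilde{p_{G}}(\omega)=\widetilde{f}(\omega)\,\widehat{G}(\omega)$ with $\widetilde{f}(\omega)=\exp(-\tfrac12\omega^{\top}\Sigma\omega)$ and $\widehat{G}(\omega)=\int e^{i\langle \omega,\theta\rangle}\,dG(\theta)$, dividing by $\widetilde{f}$ yields $\|(\widehat{G}-\widehat{G'})\,\mathbbm{1}_{\{\|\omega\|\le M\}}\|_{2}\lesssim \sqrt{h(p_{G},p_{G'})}\,e^{cM^{2}}$ for every $M>0$. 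Because $G,G'$ are supported in $[-\bar{\theta},\bar{\theta}]^{d}$, the function $\widehat{G}-\widehat{G'}$ is analytic with $\|\nabla^{s}(\widehat{G}-\widehat{G'})\|_{\infty}\lesssim(\sqrt{d}\,\bar{\theta})^{s}$; a Sobolev/Gagliardo--Nirenberg interpolation on the frequency ball $\{\|\omega\|\le M\}$ upgrades the $L^{2}$ bound to a pointwise (or ball-averaged) bound, and it is precisely this interpolation that produces the powers of $\bar{\theta}$ and the $(\log(1/h))^{d/4}$-type corrections appearing in the statement. Pairing a smooth test function of bandwidth $M$ against $G-G'$ (the duality side of $W_{1}$) then converts the frequency bound into a transport statement: for each threshold $R\asymp 1/M$ there is a coupling of $G$ and $G'$ moving all but an $\varepsilon(R)$ fraction of mass within distance $R$, where $\varepsilon(R)$ is an explicit decreasing function of $\log(1/h(p_{G},p_{G'}))$ and of $R$.

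For step (b), expand $\Psi(\|x-y\|/\lambda)=\sum_{m\ge1}\tfrac{1}{m!}(\|x-y\|/\lambda)^{\beta m}$ and split the chosen coupling into its ``bulk'' part (transport distance $\le R$, contributing at most $e^{(R/\lambda)^{\beta}}-1$) and its ``tail'' part (transport distance at most $\mathrm{diam}(\Theta)=2\sqrt{d}\,\bar{\theta}$ on a set of mass $\varepsilon(R)$, contributing at most $\varepsilon(R)\,[e^{(2\sqrt{d}\,\bar{\theta}/\lambda)^{\beta}}-1]$). Imposing that the sum be $\le 1$ and solving for the smallest feasible $\lambda$ gives, up to constants, $W_{\Psi}(G,G')\le \max\!\big(2^{1/\beta}R,\ 2\sqrt{d}\,\bar{\theta}/(\log(1+1/\varepsilon(R)))^{1/\beta}\big)$. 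It then remains to optimize jointly over $R$ (equivalently the bandwidth $M$) and over the Sobolev order used in step (a); the narrow range $1<\beta<16/15$ is exactly what makes this balance close, and unwinding it yields the three displayed terms, with the first, $\bar{\theta}^{5/4}(\log(1/h))^{-1/8}$, being dominant.

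The main obstacle is step (a): obtaining the deconvolution estimate with the \emph{correct} and fully explicit dependence on $\bar{\theta}$ and $d$ --- in particular the passage from an $L^{2}$ bound on $\widehat{G}-\widehat{G'}$ over a frequency ball to a bound strong enough to control transported mass --- and then carrying all of these parameters through the final multi-parameter optimization without degrading the polynomial-in-$\bar{\theta}$, power-$1/8$ form. The bulk/tail bookkeeping of step (b) is routine once $\varepsilon(R)$ is available, but choosing the split radius $R$ and the smoothing order so as to land precisely on the exponents $5/4$, $1/8$, $11/8$ and $1/2$ is delicate.
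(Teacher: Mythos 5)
Your overall Fourier/deconvolution instinct matches the paper's, and your reduction to $\Psi(x)=\exp(x^{\beta})-1$ via Lemma~\ref{lemma: orlicz properties} is fine, but the load-bearing step of your argument is asserted rather than proved, and it is precisely the step the paper's proof is organized to avoid. You claim that an $L^{2}$ bound on $\widehat{G}-\widehat{G'}$ over a frequency ball, combined with $W_{1}$-duality against bandlimited test functions, yields \emph{a coupling} of $G$ and $G'$ that moves all but an explicit fraction $\varepsilon(R)$ of mass within distance $R$. Kantorovich duality for $W_{1}$ only bounds $\int \psi\, d(G-G')$ for Lipschitz $\psi$, i.e.\ it controls $W_{1}(G,G')$ itself; it does not produce a mass-versus-radius transport profile for two discrete measures, and constructing one from frequency information alone (via the Sobolev interpolation you sketch) is the genuinely hard deconvolution step. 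The paper sidesteps this entirely: it convolves both $G$ and $G'$ with a product mollifier $Z_{\delta,d}$ built from $K(x)=c\bigl(\int e^{-itx}e^{-t^{4}}dt\bigr)^{2}$, uses the triangle inequality $W_{\Phi}(G,G')\le W_{\Phi}(G,G\ast Z_{\delta,d})+W_{\Phi}(G',G'\ast Z_{\delta,d})+W_{\Phi}(G\ast Z_{\delta,d},G'\ast Z_{\delta,d})$, pays $O(\delta)$ for the first two terms (Lemma~\ref{lemma:convolution_bound_orlicz_Wasserstein}, which needs the mollifier's $\exp(-c|x|^{4/3})$ tails so that $\exp(x^{\alpha})$ with $\alpha<4/3$ is integrable against it), and for the third term uses the explicit ``keep shared mass, redistribute the rest'' coupling of Lemma~\ref{lemma: orlicz_wasserstein difference}, which reduces everything to an integral against $|(G-G')\ast Z_{\delta,d}|$. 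Only then does Plancherel enter, through the ratio $|\widetilde{K}_{\delta,d}(\omega)|^{2}/|\widetilde{f}(\omega)|^{2}\le \exp(\alpha^{2}d/\delta^{4})$, which is why the mollifier's Fourier transform must decay like $\exp(-(\omega/2)^{4})$, strictly faster than the Gaussian.

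A second, more quantitative problem: your bulk/tail bookkeeping in step (b) gives a tail contribution of the form $\sqrt{d}\,\bar{\theta}\big/\bigl(\log(1+1/\varepsilon(R))\bigr)^{1/\beta}$, i.e.\ $\bar{\theta}$ to the first power and only a $\log(1/\varepsilon)$ gain. Since any $\varepsilon(R)$ extracted from Gaussian deconvolution decays at best logarithmically in $1/h$, this route yields a $\log\log(1/h)$-type rate for the tail and cannot reproduce the stated $\bar{\theta}^{5/4}(\log(1/h))^{-1/8}$ term. In the paper that term comes from a different mechanism: on $\{\|x\|>M\}$ one uses $\Phi(\|x\|/\lambda)\le\Phi\bigl(\|x\|^{5/4}/(\lambda M^{1/4})\bigr)$, splits $\|X+Y\|^{5/4}$ by convexity between the compactly supported $G-G'$ and the rapidly decaying mollifier, and obtains $(d\bar{\theta})^{5/4}/M^{1/4}+C\delta^{5/4}/M^{1/4}$, which with $M=(\log(1/h))^{1/2}$ and $\delta\asymp(\log(1/h))^{-1}$ gives exactly the exponents $5/4$, $1/8$, and $11/8$. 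So both the construction of the coupling and the tail estimate need to be replaced before your outline becomes a proof.
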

\noindent
 The proof of Theorem~\ref{theorem: gaussian_excess} is in Appendix~\ref{ssection: proof gaussian excess mass}. The key technical novelty of the proof lies in the idea of convolving the mixing measures with a mollifier which is exponentially integrable while its Fourier transform is smoother than the Gaussian location kernel. This helps to smoothly transition the problem of bounding distances on mixing measures to the Fourier transform domain of corresponding mixture densities. We make a few  comments about the above theorem. 

(i) The upper bound on the RHS of equation~\eqref{eq: orlicz-Wasserstein-hellinger} depends on a power of log-Hellinger distance between the corresponding mixture densities. This strengthens the result in Theorem 2 of~\cite{Nguyen-13}, who obtained a $(\log(1/h))^{-1/2}$ upper bound for $W_2(G,G')$. The result in Theorem~\ref{theorem: gaussian_excess} is obtained in terms of Orlicz-Wasserstein distances relative to an exponential convex function, thus lending it more flexibility.

(ii) The key object to obtaining this result is to find a suitable mollifier $Z_{\delta}$, which we choose as $c \dfrac{1}{\delta}(\int \exp(-itx/\delta) \exp( -t^4) \mathrm{d}t)^2$ with $c$ being the constant of proportionality for the proof of Theorem~\ref{theorem: gaussian_excess}. However, we believe a more refined choice of mollifier can yield sharper estimates on the RHS of equation~\eqref{eq: orlicz-Wasserstein-hellinger}.

(iii) The result is obtained with exact computation of the involvement of $\bar{\theta}$. Therefore, it can also be used for posterior contraction rates with sieve priors, although for this work we study  only compactly supported priors.
\paragraph{Outline of proof of Theorem~\ref{theorem: gaussian_excess}:} 
Here, we provide a proof strategy for Theorem~\ref{theorem: gaussian_excess}, which relies on the following triangle inequality with Orlicz-Wasserstein distance between $G$ and $G'$:
\begin{eqnarray}
     W_{\Phi}(G,G') \leq  W_{\Phi}(G, G \ast Z_{\delta,d}) + W_{\Phi}(G', G' \ast Z_{\delta,d})  + W_{\Phi}(G \ast Z_{\delta,d}, G' \ast Z_{\delta,d}), \label{eq:inequality_Orlicz_Wasserstein_bound}
\end{eqnarray}
where $Z_{\delta,d}(x_1,\dots,x_d) : = \prod_{i=1}^d \zeta_{\delta}(x_i)$ and $\zeta_{\delta}(x) : = c \dfrac{1}{\delta}(\int \exp(-itx/\delta) \exp( -t^4) \mathrm{d}t)^2$, with $c$ being the constant of proportionality. To control both $W_{\Phi}(G, G \ast Z_{\delta,d})$ and $W_{\Phi}(G', G' \ast Z_{\delta,d})$, we use the following lemma:
\begin{lemma}
\label{lemma:convolution_bound_orlicz_Wasserstein}
Assume that $\nu_2 = \nu_1 \ast Z_{\delta,d}$ where $\nu_{1}$ is a given probability measure on $(\mathbb{R}^d,\|\cdot\|)$. Furthermore, suppose that $\Phi(x) \leq \exp(x^{\alpha})-1$ for some $1 < \alpha <4/3$. Then, there exists universal constant $C_{\alpha}$ depending only on $\alpha$ such that
    \begin{eqnarray*}
         W_{\Phi}(\nu_1, \nu_2) \leq C_{\alpha} \delta.
    \end{eqnarray*}
\end{lemma}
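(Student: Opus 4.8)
The plan is to prove Lemma~\ref{lemma:convolution_bound_orlicz_Wasserstein} by exhibiting an explicit coupling of $\nu_1$ and $\nu_2 = \nu_1 \ast Z_{\delta,d}$ and bounding its Orlicz transportation cost by the Orlicz norm of the displacement that $Z_{\delta,d}$ introduces. First I would observe that $\Psi(x) := \exp(x^{\alpha}) - 1$ is itself an admissible function in the sense of Definition~\ref{definition: orlicz function}: since $\alpha > 1$, it is a composition of the increasing convex maps $x \mapsto x^{\alpha}$ and $u \mapsto e^{u}-1$, hence convex, with $\Psi(x)/x \sim x^{\alpha-1}\to 0$ as $x\to 0$ and $\Psi(x)/x \to \infty$ as $x\to\infty$. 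Because $\Phi(x)\le\Psi(x)$ for all $x>0$ by hypothesis, Lemma~\ref{lemma: orlicz properties} gives $W_{\Phi}(\nu_1,\nu_2)\le W_{\Psi}(\nu_1,\nu_2)$, so it is enough to bound the latter.

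Next, I would take the additive coupling $\pi := \mathrm{Law}(X, X+W)$, where $X\sim\nu_1$ and $W\sim Z_{\delta,d}$ are independent; since $Z_{\delta,d}$ is a probability density and $\nu_2 = \nu_1\ast Z_{\delta,d}$, the second marginal of $\pi$ is exactly $\nu_2$. Inserting $\pi$ into Definition~\ref{def:Orlic-Wasserstein} and using $\|x-(x+w)\|=\|w\|$ yields
\[
 W_{\Psi}(\nu_1,\nu_2) \;\le\; \inf\big\{\lambda>0 : \mathbb{E}\big[\Psi(\|W\|/\lambda)\big]\le 1\big\} \;=\; \big\|\,\|W\|\,\big\|_{\Psi},
\]
the Orlicz norm of the scalar variable $\|W\|$. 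Writing $W=(W_1,\dots,W_d)$ with $W_i$ i.i.d.\ of density $\zeta_{\delta}$ and setting $V_i := W_i/\delta$, a change of variables shows the $V_i$ are i.i.d.\ with density $\zeta_1(v) = c\,\psi(v)^2$, where $\psi(v) := \int \exp(-itv)\exp(-t^4)\,\mathrm{d}t$ is real (as $e^{-t^4}$ is even) and $\zeta_1$ is a genuine probability density by Plancherel with the same normalizing constant $c$. Since the Orlicz norm is positively homogeneous, $\big\|\,\|W\|\,\big\|_{\Psi} = \delta\,\big\|\,\|V\|\,\big\|_{\Psi}$ with $V=(V_1,\dots,V_d)$, and the claim reduces to the $\delta$-free bound $\big\|\,\|V\|\,\big\|_{\Psi}\le C_{\alpha}$.

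The technical heart is a stretched-exponential tail bound for $\zeta_1$. Exploiting that $t\mapsto e^{-t^4}$ is entire, I would shift the contour $t\mapsto t\mp ia$ ($a>0$) in the integral defining $\psi(v)$; using $\mathrm{Re}\,(t\mp ia)^4 = t^4 - 6a^2t^2 + a^4$ and the fact that $\int e^{-(t^2-3a^2)^2}\,\mathrm{d}t$ is bounded uniformly in $a\ge 0$, this produces $|\psi(v)|\le C\,e^{-a|v| + 8a^4}$; optimizing over $a$ (so $a\asymp|v|^{1/3}$) gives $|\psi(v)|\le C\,e^{-c|v|^{4/3}}$ and hence $\zeta_1(v)\lesssim e^{-c'|v|^{4/3}}$ for absolute constants $c,c',C>0$. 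Finally, from $\|V\|\le\sum_i|V_i|$, the elementary inequality $(\sum_i a_i)^{\alpha}\le d^{\alpha-1}\sum_i a_i^{\alpha}$ (valid since $\alpha>1$), and independence,
\[
 \mathbb{E}\big[e^{(\|V\|/\lambda)^{\alpha}}\big] \;\le\; \prod_{i=1}^{d}\mathbb{E}\big[e^{d^{\alpha-1}(|V_i|/\lambda)^{\alpha}}\big] \;=\; \Big(\textstyle\int e^{d^{\alpha-1}(|v|/\lambda)^{\alpha}}\,\zeta_1(v)\,\mathrm{d}v\Big)^{d}.
\]
Because $\alpha<4/3$, the exponent $d^{\alpha-1}(|v|/\lambda)^{\alpha} - c'|v|^{4/3}\to-\infty$, so each integral is finite and, by dominated convergence, decreases to $1$ as $\lambda\to\infty$; picking $\lambda$ large enough (depending only on $\alpha$ and $d$ through the absolute constants above) makes the product $\le 2$, i.e.\ $\mathbb{E}[\Psi(\|V\|/\lambda)]\le 1$, whence $\big\|\,\|V\|\,\big\|_{\Psi}\le\lambda =: C_{\alpha}$. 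Chaining the bounds gives $W_{\Phi}(\nu_1,\nu_2)\le C_{\alpha}\delta$. I expect the main obstacle to be the tail estimate for the Fourier transform $\psi$ of $e^{-t^4}$ — establishing the exponent $4/3$ and checking that it strictly exceeds $\alpha$, which is exactly where the hypothesis $\alpha<4/3$ enters; the remaining steps are routine manipulations with couplings and Orlicz norms.
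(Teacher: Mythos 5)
Your proposal is correct and follows essentially the same route as the paper: the additive (independent) coupling $(X, X+W)$ with $W\sim Z_{\delta,d}$, reduction to the Orlicz norm of the mollifier displacement with the $\delta$-scaling pulled out, the stretched-exponential tail bound $|\psi(v)|\lesssim e^{-c|v|^{4/3}}$ obtained by a contour shift (this is exactly the paper's Lemma on the Fourier transform of $e^{-t^4}$, with the same exponent $4/3$), and the observation that $\alpha<4/3$ makes the resulting one-dimensional Orlicz integrals finite. The only cosmetic differences are your use of $\|V\|\le\|V\|_1$ plus the power-mean inequality where the paper uses $\|\cdot\|_2\le\|\cdot\|_\alpha$, and your explicit acknowledgment that the constant also depends on $d$, which is in fact more careful than the paper's statement.
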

\noindent
The proof of Lemma~\ref{lemma:convolution_bound_orlicz_Wasserstein} is in Appendix~\ref{subsec:proof:lemma:convolution_bound_orlicz_Wasserstein}. For the final term $W_{\Phi}(G \ast Z_{\delta,d}, G' \ast Z_{\delta,d})$, we can upper bound it using the following result:
\begin{lemma}
\label{lemma: orlicz_wasserstein difference}
Let $\nu_1,\nu_2$ be probability measures on $(\mathbb{R}^d,\|\cdot\|)$ and let $\Phi$ be a convex function satisfying conditions (i) and (ii) in Definition~\ref{definition: orlicz function}. 
Then, we obtain that
\begin{eqnarray}
W_{\Phi}(\nu_1,\nu_2) &\leq 2 \inf \{\lambda \in \mathbb{R}^{+}: \\ &\int _{\mathbb{R}^d }\Phi (\|x\|/\lambda)\,d|\nu_1(x)- \nu_2(x)|\leq 1\}. \nonumber
\end{eqnarray}
\end{lemma}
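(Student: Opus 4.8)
The plan is to prove Lemma~\ref{lemma: orlicz_wasserstein difference} by exhibiting an explicit coupling of $\nu_1$ and $\nu_2$ built from their common part and their ``leftover'' parts, and then estimating the Orlicz cost of that coupling. Write $\rho$ for a common dominating measure (e.g. $\nu_1+\nu_2$) and let $g_1,g_2$ be the densities of $\nu_1,\nu_2$ with respect to $\rho$. Decompose $g_i = (g_1\wedge g_2) + r_i$ where $r_i = (g_i - g_2\wedge g_1)_+ \ge 0$; then $\int (g_1\wedge g_2)\,d\rho = 1 - m$ and $\int r_i\,d\rho = m$ where $m = \tfrac12\int|g_1-g_2|\,d\rho = \tfrac12\|\nu_1-\nu_2\|_{TV}$, and crucially $r_1$ is supported where $g_1>g_2$ and $r_2$ where $g_2>g_1$, so $r_1,r_2$ have disjoint supports and $r_1 - r_2 = g_1-g_2$, i.e. $|\nu_1-\nu_2| = r_1\,d\rho + r_2\,d\rho$ as measures.

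The coupling I would use is $\nu = \nu_{\mathrm{diag}} + \nu_{\mathrm{prod}}$, where $\nu_{\mathrm{diag}}$ is the pushforward of $(g_1\wedge g_2)\,d\rho$ under $x\mapsto(x,x)$ (the part that stays put, contributing zero transport cost), and $\nu_{\mathrm{prod}} = \tfrac1m\, (r_1\,d\rho)\otimes(r_2\,d\rho)$ is the independent coupling of the two leftover mass-$m$ measures. One checks the marginals are correct: the first marginal is $(g_1\wedge g_2)\,d\rho + \tfrac1m r_1\,d\rho\cdot m = g_1\,d\rho = \nu_1$, similarly for $\nu_2$. Now for any $\lambda>0$, using convexity of $\Phi$ and $\|x-y\|\le\|x\|+\|y\|$ together with $\Phi$ nondecreasing,
\begin{align*}
\int \Phi\!\left(\frac{\|x-y\|}{2\lambda}\right) d\nu(x,y)
&= \int \Phi\!\left(\frac{\|x-y\|}{2\lambda}\right) d\nu_{\mathrm{prod}}(x,y) \\
&\le \frac12 \int \Phi\!\left(\frac{\|x\|}{\lambda}\right) d\nu_{\mathrm{prod}}(x,y) + \frac12 \int \Phi\!\left(\frac{\|y\|}{\lambda}\right) d\nu_{\mathrm{prod}}(x,y) \\
&= \frac12 \int \Phi\!\left(\frac{\|x\|}{\lambda}\right) \frac{r_1(x)}{m}\,d\rho(x) + \frac12 \int \Phi\!\left(\frac{\|y\|}{\lambda}\right) \frac{r_2(y)}{m}\,d\rho(y) \\
&= \frac{1}{2m}\int \Phi\!\left(\frac{\|x\|}{\lambda}\right) d|\nu_1-\nu_2|(x),
\end{align*}
where the last equality uses $r_1\,d\rho + r_2\,d\rho = |\nu_1-\nu_2|$ and that the integrand picks out $r_1$ on its support and $r_2$ on its support. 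Hence if $\lambda$ satisfies $\int \Phi(\|x\|/\lambda)\,d|\nu_1-\nu_2|(x)\le 1$, then since $m\le 1/2$ we get $\tfrac{1}{2m}\ge 1$... wait, that is the wrong direction; instead I would use $m \le 1$, which gives $\tfrac{1}{2m} \ge \tfrac12$, still not obviously $\le 1$. The clean fix: observe $2m = \int|g_1-g_2|\,d\rho = \int r_1\,d\rho + \int r_2\,d\rho$, and simply bound $\int\Phi(\|x-y\|/(2\lambda))\,d\nu \le \tfrac12\!\int\Phi(\|x\|/\lambda)\,\tfrac{r_1}{m}d\rho + \tfrac12\!\int\Phi(\|y\|/\lambda)\,\tfrac{r_2}{m}d\rho$; each of $\tfrac{r_i}{m}d\rho$ is a probability measure, but $|\nu_1-\nu_2| = r_1 d\rho + r_2 d\rho$ has total mass $2m\le 1$, so $\tfrac1m(r_1 d\rho + r_2 d\rho)$ has mass $2$, and the average above is exactly $\tfrac{1}{2m}\int\Phi(\|x\|/\lambda)d|\nu_1-\nu_2|(x) \le \tfrac{1}{2m}\cdot m\cdot(\text{mass-one avg}) $. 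Concretely, $\int\Phi(\|x\|/\lambda)\,d|\nu_1-\nu_2| \le 1$ implies the displayed quantity is $\le \tfrac{1}{2m}\le$ not bounded; so the correct reading is: with $\lambda = 2\inf\{\lambda' : \int\Phi(\|x\|/\lambda')d|\nu_1-\nu_2|\le 1\}$, monotonicity gives $\int\Phi(\|x\|/(\lambda/2))d|\nu_1-\nu_2|\le 1$, hence $\int\Phi(\|x-y\|/\lambda)d\nu \le \tfrac{1}{2m}\cdot 1$. Since we need this $\le 1$, and $m$ can be tiny, the resolution is that $|\nu_1-\nu_2|$ as used in the statement already has the ``$\le 1$'' normalization absorbed — indeed $\int\Phi(\|x\|/\lambda')d|\nu_1-\nu_2|\le1$ with total variation mass $2m$ means the per-unit-mass integral is $\le \tfrac{1}{2m}$, and pushing through $\Phi$'s convexity with Jensen against the probability measure $\tfrac{1}{2m}|\nu_1-\nu_2|$ gives what is needed. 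I would lay this out carefully so the factor $2$ in the statement is exactly consumed by the triangle inequality $\|x-y\|\le\|x\|+\|y\|$ and the convex averaging.

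The main obstacle, as the scratch-work above shows, is bookkeeping the total-variation normalization: $|\nu_1-\nu_2|$ is a sub-probability measure of mass $2m \le 1$, and one must be careful that the independent-coupling construction divides by exactly $m$ (not $2m$) and that the factor $2$ in the bound comes solely from the subadditivity of the norm inside the convex function. I would double-check the edge cases $m=0$ (then $\nu_1=\nu_2$ and both sides are $0$) and $m$ close to its maximum. Once the coupling and the convexity estimate are pinned down, the conclusion $W_\Phi(\nu_1,\nu_2)\le\lambda$ follows immediately from the definition of $W_\Phi$ as an infimum over couplings, since the constructed $\nu$ witnesses that $\lambda$ lies in the defining set.
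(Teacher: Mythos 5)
Your coupling is exactly the one the paper uses (diagonal transport of the common part $\nu_1\wedge\nu_2$, plus the normalized product coupling $\tfrac1m(r_1\,d\rho)\otimes(r_2\,d\rho)$ of the leftover parts), and the convexity step $\Phi\bigl(\|x-y\|/(2\lambda)\bigr)\le\tfrac12\Phi(\|x\|/\lambda)+\tfrac12\Phi(\|y\|/\lambda)$ is the right one. The gap is in the marginalization line, and it is a genuine error in the write-up as it stands: when you integrate $\Phi(\|x\|/\lambda)$ against $\nu_{\mathrm{prod}}=\tfrac1m(r_1\,d\rho)\otimes(r_2\,d\rho)$, the $y$-integration produces a factor $\int r_2\,d\rho=m$ which cancels the $1/m$, so
\begin{equation*}
\int\Phi\!\left(\frac{\|x\|}{\lambda}\right)d\nu_{\mathrm{prod}}(x,y)
=\frac1m\left(\int\Phi\!\left(\frac{\|x\|}{\lambda}\right)r_1\,d\rho\right)\left(\int r_2\,d\rho\right)
=\int\Phi\!\left(\frac{\|x\|}{\lambda}\right)r_1\,d\rho ,
\end{equation*}
\emph{without} the spurious $\tfrac1m$ you carried into your third display line. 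Summing the two half-terms then gives exactly $\tfrac12\int\Phi(\|x\|/\lambda)\,d|\nu_1-\nu_2|\le\tfrac12\le 1$ whenever $\lambda$ lies in the defining set, and the lemma follows. Your subsequent attempts to repair the phantom $\tfrac{1}{2m}$ do not work: $m$ can be arbitrarily small, so $\tfrac{1}{2m}$ is unbounded, and the proposed Jensen step against the probability measure $\tfrac{1}{2m}|\nu_1-\nu_2|$ goes in the wrong direction (it lower-bounds $\int\Phi$, which is useless for an upper bound on the transport cost). In short: the idea and the coupling are correct and match the paper's proof, but the chain of inequalities you actually wrote does not close; redoing the marginal computation as above is the entire fix, and the factor $2$ in the statement is indeed consumed solely by the triangle inequality inside $\Phi$, exactly as you intended.
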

\noindent
The proof of Lemma~\ref{lemma: orlicz_wasserstein difference} is in Appendix~\ref{subsec:proof:lemma: orlicz_wasserstein difference}. Using triangle inequality and Lemmas~\ref{lemma:convolution_bound_orlicz_Wasserstein} and~\ref{lemma: orlicz_wasserstein difference}, we obtain
\begin{eqnarray}
     W_{\Phi}(G,G') \precsim \delta + \inf \{\lambda \in \mathbb{R}^{+}: \int _{\mathbb{R}^d }\Phi (\|x\|/\lambda)\cdot |(G-G') \ast Z_{\delta,d}(x)|\mathrm{d}x  \leq 1\}. \nonumber
\end{eqnarray}
We then decompose the integral with respect to $\mathbb{R}^{d}$ into two integrals: one with respect to $\|x\| \leq M$ and one with respect to $\|x\| > M$, and after some algebraic manipulations, we have
\begin{align*}
    & \hspace{-2 em} \inf \biggr\{\lambda \in \mathbb{R}^{+}: 
     \int _{\mathbb{R}^d }\Phi (\|x\|/\lambda)\cdot |(G-G') \ast Z_{\delta,d}(x)|\mathrm{d}x  \leq 1\biggr\} \\
    & \precsim \dfrac{M}{ \log(C/(  h(p_{G},p_{G_{0}}) \exp(\alpha^2d \delta^{-4})M^{d/2}))} + \dfrac{(d \bar{\theta})^{5/4}}{\log(3/2) M^{1/4}} + \frac{\delta^{5/4}}{M^{1/4}},
\end{align*}
for any $M > 0$ where $C$ is some universal constant. Collecting these results leads to
\begin{align*}
    W_{\Phi}(G,G') &\precsim \inf_{\delta, M} \biggr\{ \delta +  \dfrac{M}{ \log(C/(  h(p_{G},p_{G_{0}}) \exp(\alpha^2d \delta^{-4})M^{d/2}))} + \dfrac{(d \bar{\theta})^{5/4}}{\log(3/2) M^{1/4}} + \frac{\delta^{5/4}}{M^{1/4}} \biggr\}.
\end{align*}
Solving the minimization problem, we obtain the conclusion of Theorem~\ref{theorem: gaussian_excess}. 

In the next section, we use Theorem~\ref{theorem: gaussian_excess} to establish posterior contraction bounds of parameter estimating in Dirichet Process Gaussian mixtures.

\subsection{Posterior contraction with Orlicz Wasserstein distances}
\label{ssection:posterior_contraction_gaussian}
On the parametric estimation front, ~\cite{Nguyen-13, Guha-MTM-19, Lin-Gaussian} establish logarithmic rates for estimating mixing measures in Dirichlet Process Gaussian mixtures. While~\cite{Nguyen-13} establishes an approximately $\log(n)^{-1/2}$ rate of contraction relative to the $W_2$ metric, more recently,~\cite{Lin-Gaussian} establish minimax type $\approx \log(n)$ rates relative to the $W_1$ metric. Putting the results in context with Lemma~\ref{lemma: orlicz_bound}, both those results imply, 
$\sum_j p_j\mathbbm{1}_{\|\theta_j-\theta^0_i\|>\eta \text{ for all } i}\approx \log(n)$, meaning the mass of posterior sample atoms in the region of parameter space not populated by atoms of the true (data-generating) mixing measure decays logarithmically. This puts the use of DPGMMs for clustering in a negative light.

In this section, we show that a much stronger almost polynomial rate can be established for this objective, facilitated by the use of Orlicz-Wasserstein metrics. To facilitate our presentation, we consider the following notation.
\begin{align}
    \excessmass_{\eta}(\Theta, r) : = \biggr\{G &= \sum p_i \delta_{\theta_i} \in \overline{\Gcal}(\Theta_{n1})  : \sum_j p_j \mathbbm{1}_{\{ \|\theta_j-\theta^0_i\|> \eta \text{ for all } i \}} \geq r \biggr\}. \label{eq:excess_mass_set} 
\end{align}

$\excessmass_{\eta}(\Theta, r)$ here denotes the set of mixing measures which devote at least $r$ probability mass to atoms which are away from the atoms of $G_0$ by distance $\eta$. To study the contraction of mixing measure of DPGMMs, we impose the following assumption on the base distribution $H$.

%Assume that $\Theta_0 \subset [-\bar{\theta}_0,\bar{\theta}_0]^{d}$ is the bounded subset of $\mathbb{R}^d$ containing all the atoms of $G_0$, $f$ is the multivariate Gaussian kernel with covariance matrix $\Sigma$, $\Pi_n$ is the Dirichlet process prior distribution with $H_n$, the base distribution satisfying the following condition (P.1)

(P.1) The base distribution $H$ is supported on $\Theta= [-\bar{\theta},\bar{\theta}]^{d}$, and absolutely continuous with respect to the Lebesgue measure $\mu$ on $\Theta$ and admits a density function $g(\cdot)$. Also, $H$ is approximately uniform, i.e., $\min_{\theta \in \Theta} g(\theta) > \dfrac{c_0}{\mu(\Theta)} > 0$.

Let $f_1(n,d) := (\log(n)/(d+2)- \log(\log n))^{-1/8}$.
\begin{theorem}
\label{theorem:posterior_gaussian_excess}
Given the Dirichlet Process Gaussian mixture models~\eqref{eq:DPMM}, if $\Phi$ satisfies the assumptions in Theorem~\ref{theorem: gaussian_excess}, then for any $\eta >0$ the following holds:
 \begin{eqnarray}
    \Pi_n \biggr(G \in \overline{\Gcal}(\Theta)&: W_{\Phi}(G,G_0) \geq  f_1(n,d) \; \bigg| \; X_{1:n}\biggr) \overset{P_{G_0}^{n}}{\rightarrow} 0. \nonumber
\end{eqnarray}
\end{theorem}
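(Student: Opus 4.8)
The plan is to reduce the statement to two ingredients: (i) a Hellinger posterior contraction rate for the DPGMM density $p_G$ around $p_{G_0}$, obtained from classical Bayesian nonparametric machinery, and (ii) the deterministic inequality of Theorem~\ref{theorem: gaussian_excess}, which converts smallness in Hellinger distance into smallness in the Orlicz--Wasserstein distance $W_{\Phi}$. The second ingredient is already in hand; the work is in the first and in gluing the two together with the right bookkeeping of constants.

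\emph{Step 1 (Hellinger contraction).} First I would show that there is a sequence $\epsilon_n \to 0$, polynomially small in $n$ up to logarithmic factors and satisfying $\log(1/\epsilon_n) \gtrsim \log(n)/(d+2) - \log\log n$, such that
\begin{eqnarray*}
\Pi_n\bigl(h(p_G, p_{G_0}) > \epsilon_n \bigm|\; X_{1:n}\bigr) \overset{P_{G_0}^{n}}{\rightarrow} 0 .
\end{eqnarray*}
This is a pure density-estimation claim, and I would obtain it from the general posterior-contraction framework of~\cite{Ghosal-Ghosh-vanderVaart-00}, checking for the DP prior $\Pi_n$ with base measure $H$ satisfying (P.1): a prior-concentration bound $\Pi_n(\{G : \kull{p_{G_0}}{p_G} \le \epsilon_n^2,\ \var_{p_{G_0}}(\log(p_{G_0}/p_G)) \le \epsilon_n^2\}) \ge \exp(-c\, n\epsilon_n^2)$, and a sieve/entropy condition $\log D(\epsilon_n, \Pcal_n, h) \lesssim n\epsilon_n^2$ with $\Pi_n(\Pcal_n^c) \lesssim \exp(-(c+4)n\epsilon_n^2)$ for appropriate sieves $\Pcal_n$ controlled by a growing number $N_n$ of atoms in the compact set $\Theta$ (together with a weight truncation, since DP realizations always have infinitely many atoms). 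The prior-concentration bound is handled by approximating the possibly infinite-component mixture $p_{G_0}$ by a finite $N_n$-component mixture $p_{\widetilde G_n}$ whose Hellinger distance and induced KL divergence to $p_{G_0}$ are at most $\epsilon_n^2$, and then lower-bounding the DP-prior mass of a small ball around $\widetilde G_n$ via the approximate uniformity and compact support in (P.1); the covering number of Gaussian mixtures with at most $N_n$ atoms in $[-\bar{\theta},\bar{\theta}]^d$ is bounded in the usual way by $(\bar{\theta}/\epsilon_n)^{O(N_n d)}$. Balancing the two requirements fixes $N_n$ and pins down $\epsilon_n$ and the factor $1/(d+2)$. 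All of this is of exactly the kind already carried out for DPGMMs in~\cite{Nguyen-13, Ghosal-vanderVaart-01, Lin-Gaussian}, so I would cite those computations rather than reproduce them.

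\emph{Step 2 (conversion).} On the event $\{h(p_G, p_{G_0}) \le \epsilon_n\}$, which has posterior probability tending to one by Step~1, apply Theorem~\ref{theorem: gaussian_excess} with $G'=G_0$ (legitimate, since $\Phi(x) \le \exp(x^{\beta})-1$ for some $\beta \in (1,16/15)$ by hypothesis and $\Theta = [-\bar{\theta},\bar{\theta}]^d$). Each summand on the right of~\eqref{eq: orlicz-Wasserstein-hellinger} is decreasing in $1/h(p_G,p_{G_0})$, so replacing $h(p_G,p_{G_0})$ by the larger quantity $\epsilon_n$ only enlarges the bound; substituting $\log(1/\epsilon_n) \gtrsim \log(n)/(d+2) - \log\log n$ shows the first summand $\bar{\theta}^{5/4}(\log(1/h))^{-1/8}$ dominates and is of order $(\log(n)/(d+2) - \log\log n)^{-1/8}$, while the second summand is of strictly smaller order and the third has denominator $\log(1/\epsilon_n) - (d/4)\log\log(1/\epsilon_n) + O(1) \gtrsim \log(n)/(d+2)$, hence is $O((\log n)^{-1/2})$. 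Tracking the constants (possible because Theorem~\ref{theorem: gaussian_excess} keeps the dependence on $\bar{\theta}$ and the covariance explicit) then gives $W_{\Phi}(G,G_0) \le f_1(n,d)$ for all $n$ large enough, with posterior probability tending to one. Combining with Step~1,
\begin{eqnarray*}
\Pi_n\bigl(W_{\Phi}(G,G_0) \ge f_1(n,d) \bigm|\; X_{1:n}\bigr) \le \Pi_n\bigl(h(p_G,p_{G_0}) > \epsilon_n \bigm|\; X_{1:n}\bigr) + o(1) \overset{P_{G_0}^{n}}{\rightarrow} 0 ,
\end{eqnarray*}
which is the assertion. (The quantifier ``for any $\eta>0$'' is immaterial to this bound; it becomes relevant only when one feeds $W_{\Phi}(G,G_0) \le f_1(n,d)$ into Lemma~\ref{lemma: orlicz_bound} to conclude that the posterior mass on atoms farther than $\eta$ from every $\theta_i^0$ is at most $(\Phi(\eta/f_1(n,d)))^{-1}$, which is almost polynomially small in $n$.)

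\emph{Main obstacle.} The conversion in Step~2 is routine once Theorem~\ref{theorem: gaussian_excess} is available; the genuine work is Step~1, and within it the prior-concentration bound. The difficulty is that $G_0$ may have infinitely many atoms, so prior mass cannot be deposited near $G_0$ directly: one must choose the finite approximant $\widetilde G_n$ and its size $N_n$ so that $p_{\widetilde G_n}$ is $\epsilon_n^2$-close to $p_{G_0}$ in KL while simultaneously the DP-prior ball around $\widetilde G_n$ still carries mass $\ge \exp(-cn\epsilon_n^2)$ --- two requirements that push $N_n$ in opposite directions and thereby determine $\epsilon_n$ and the exponent $1/(d+2)$ appearing in $f_1$. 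A secondary, bookkeeping-level obstacle is reconciling the explicit constant $C\bar{\theta}^{5/4}$ from Theorem~\ref{theorem: gaussian_excess} with the $\bar{\theta}$-free threshold $f_1(n,d)$, which only requires $n$ to be large enough depending on $\bar{\theta}$, $d$, and $\Sigma$.
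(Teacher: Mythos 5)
Your proposal is correct and follows essentially the same route as the paper: a Hellinger posterior contraction rate $\epsilon_n \asymp (\log n)\, n^{-1/(d+2)}$ obtained from the Ghosal--Ghosh--van der Vaart machinery (the paper verifies the prior-mass condition via a $W_2$-ball-to-KL-ball inclusion and Lemma~\ref{lemma:Prior_mass_DPMM} rather than an explicit finite approximant plus sieve, but over the compact $\Theta$ these are interchangeable), followed by the conversion to $W_{\Phi}$ via Theorem~\ref{theorem: gaussian_excess} with the dominant term $(\log(1/\epsilon_n))^{-1/8}$ yielding $f_1(n,d)$. No gaps.
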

\noindent
 The proof of Theorem~\ref{theorem:posterior_gaussian_excess} is in Appendix~\ref{subsec:proof:theorem:gaussian_excess}. The following result is a simple corollary of Theorem~\ref{theorem:posterior_gaussian_excess}.

 \begin{corollary}
 \label{corollary:gaussian_excess}
 Given all the assumptions in Theorem~\ref{theorem:posterior_gaussian_excess},
\begin{eqnarray}
    \Pi_n \biggr(G \in \excessmass_{\eta} \left(\Theta, 2\exp\left( \dfrac{-\eta\log(n)^{1/8}}{(d+2)}\right)\right) \; \bigg| \; X_{1:n} \biggr) \overset{P_{G_0}}{\rightarrow} 0.
\end{eqnarray}  
 \end{corollary}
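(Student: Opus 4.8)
\noindent\textbf{Proof sketch for Corollary~\ref{corollary:gaussian_excess}.}
The plan is to exhibit the excess-mass event in the corollary as a subset of the Orlicz--Wasserstein event already controlled by Theorem~\ref{theorem:posterior_gaussian_excess}, and then quote that theorem. Fix $1<\beta<16/15$ and work throughout with the exponential Orlicz function $\Phi(x)=\exp(x^\beta)-1$: it is convex, satisfies conditions (i)--(ii) of Definition~\ref{definition: orlicz function} (near $0$ it behaves like $x^\beta$ with $\beta>1$, so $\Phi(x)/x\to0$, while $\Phi(x)/x\to\infty$ at infinity), and obeys $\Phi(x)\le\exp(x^\beta)-1$ trivially, so both Theorem~\ref{theorem: gaussian_excess} and Theorem~\ref{theorem:posterior_gaussian_excess} apply to it; this is also the choice for which the bound of Lemma~\ref{lemma: orlicz_bound} is sharpest. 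Write $r_n:=2\exp\!\left(-\eta(\log n)^{1/8}/(d+2)\right)$ and recall $f_1(n,d)=\left(\log(n)/(d+2)-\log\log n\right)^{-1/8}$.

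First I would unwind the definitions. If $G=\sum_i p_i\delta_{\theta_i}\in\excessmass_\eta(\Theta,r_n)$, then by definition of $\excessmass_\eta$ we have $\sum_j p_j\mathbbm{1}_{\{\|\theta_j-\theta^0_i\|>\eta\ \forall i\}}\ge r_n$, and Lemma~\ref{lemma: orlicz_bound} then forces $\Phi\!\left(\eta/W_\Phi(G,G_0)\right)\le 1/r_n$. Since $x\mapsto\Phi(x)=\exp(x^\beta)-1$ is a strictly increasing bijection of $[0,\infty)$ with inverse $y\mapsto(\log(1+y))^{1/\beta}$, this rearranges to
\[
 W_\Phi(G,G_0)\ \ge\ \frac{\eta}{(\log(1+1/r_n))^{1/\beta}}.
\]

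The remaining work is the asymptotic comparison of this lower bound with $f_1(n,d)$. Because $1/r_n=\tfrac12\exp\!\left(\eta(\log n)^{1/8}/(d+2)\right)$, for all large $n$ one has $\log(1+1/r_n)\le\eta(\log n)^{1/8}/(d+2)$, whence $W_\Phi(G,G_0)\ge\eta^{\,1-1/\beta}(d+2)^{1/\beta}(\log n)^{-1/(8\beta)}$. On the other hand $\log(n)/(d+2)-\log\log n\ge\tfrac12\log(n)/(d+2)$ for large $n$, so $f_1(n,d)\le 2^{1/8}(d+2)^{1/8}(\log n)^{-1/8}$. Since $\beta>1$ gives $1/(8\beta)<1/8$, the ratio $(\log n)^{-1/(8\beta)}/(\log n)^{-1/8}=(\log n)^{(1-1/\beta)/8}\to\infty$, so $W_\Phi(G,G_0)\ge f_1(n,d)$ once $n$ is large enough (depending on $\eta,d,\beta$). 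Hence $\excessmass_\eta(\Theta,r_n)\subseteq\{G:W_\Phi(G,G_0)\ge f_1(n,d)\}$ for all large $n$, and therefore
\[
 \Pi_n\bigl(\excessmass_\eta(\Theta,r_n)\mid X_{1:n}\bigr)\ \le\ \Pi_n\bigl(W_\Phi(G,G_0)\ge f_1(n,d)\mid X_{1:n}\bigr)\ \overset{P_{G_0}^n}{\longrightarrow}\ 0
\]
by Theorem~\ref{theorem:posterior_gaussian_excess}, which is the claim.

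I do not expect a genuine obstacle: the corollary is essentially a change of variables, via Lemma~\ref{lemma: orlicz_bound}, between the metric $W_\Phi$ and the outlier-mass functional. The only point needing care is the constant and lower-order bookkeeping (the ``$-1$'' in $\exp(x^\beta)-1$, the $\log\log n$ term inside $f_1$, and the leading $2$ in $r_n$), together with the conceptual observation that the improvement from a logarithmic to an almost-polynomial $\exp(-c(\log n)^{1/8})$ rate is driven entirely by $\beta>1$: the super-exponential growth of $\Phi$ is exactly what converts the modest $(\log n)^{-1/8}$ Orlicz--Wasserstein bound of Theorem~\ref{theorem:posterior_gaussian_excess} into the sharply decaying bound on the mass deposited in outlier regions of the parameter space.
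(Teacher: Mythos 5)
Your proof is correct and follows essentially the same route as the paper: apply Lemma~\ref{lemma: orlicz_bound} with $\Phi(x)=\exp(x^{\beta})-1$ for a fixed $1<\beta<16/15$, invert to turn the excess-mass event into an Orlicz--Wasserstein lower bound, and conclude via Theorem~\ref{theorem:posterior_gaussian_excess}. Your explicit asymptotic comparison of the exponents ($1/(8\beta)<1/8$) to establish the set inclusion for large $n$ is a clean way to handle the bookkeeping that the paper disposes of by letting $\beta$ range over $(1,16/15)$.
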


The proof of Corollary~\ref{corollary:gaussian_excess} is in Appendix~\ref{proof:corollary}. 

\vspace{0.5 em}
\noindent
\textbf{Remarks:}
(i) Corollary~\ref{corollary:gaussian_excess} suggests that if $\eta$ can be chosen sufficiently small so that each $\eta$-neighborhood contains at most one true atom, Gaussian mixture models can be useful choices in clustering as well since outlier atoms vanish at almost polynomial rates. 

(ii) We believe the rate of contraction can be optimized further with a more refined choice of $\Phi(\cdot)$, however, we make no such attempts in this work. Corollary~\ref{corollary:gaussian_excess} reveals the power of Orlicz-Wasserstein distances for Gaussian mixture models. On the other hand, this exponential choice of $\Phi$ does not improve on the bound for heavy tailed kernels such as Laplace location mixtures. 

We show in this section that Orlicz-Wasserstein metrics provide strong theoretical guarantees for mixing measures. This raises the natural question as to how such a metric can be computed for arbitrary choices of $\Phi$. We provide some guidance in that regard in the following section.

\subsection{Computation of the Orlicz-Wasserstein}
\label{ssection:computation}

\begin{figure*}[htp]
\centering
\subfigure[Standard Wasserstein $W_1$]{\includegraphics[scale=0.45]{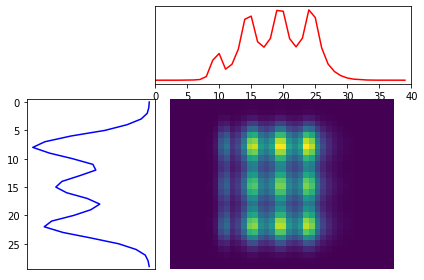}}
  \subfigure[Orlicz-Wasserstein with exponential $\Phi$]{\includegraphics[scale=0.45]{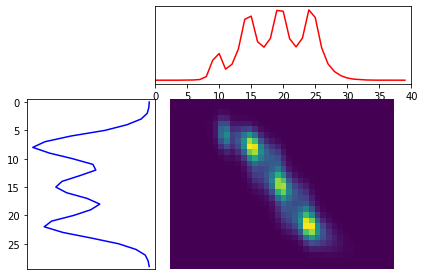}}

\caption{Transportation plans. (a) Entropic OT produces more global plans and is unable to capture local structure of mass transfers. (b) Entropic Orlicz-Wasserstein penalizes mass transfers over large distances}
\label{fig:sim-performance}
\end{figure*}

 In practice, the Euclidean Wasserstein distance is computed for samples of the respective distributions. The exact computation turns out to be a linear programming problem which scales to the order of $O(n^3 \log(n))$, where $n$ is the combined sample size of the two sampling distributions for which the distance is being calculated.~\cite{Cuturi-Sinkhorn-13} shows that using entropic regularization this can be drastically improved to $O(n^2)$~\cite{altschuler2017near, lin2019efficient, Lin-2019-Acceleration}. Further speed-ups and easiness of computation via the use of dual formulation of the entropic regularization has been explored by the works of~\cite{Seguy-etal-2017,genevay_large_scale,Genevay-thesis}. Here we consider the entropic regularized version of the Orlicz-Wasserstein metrics.

\vspace{0.5 em}
\noindent
\textbf{Computational procedure:} In that respect, we consider solving the following problem as a surrogate to equation~\eqref{eq:orlicz-Wasserstein}.
\begin{eqnarray}
\label{eq:surrogate_orlicz-Wasserstein}
     W^{\lambda}_{\Phi}(\nu_1,\nu_2)&:= \inf_{\nu \in \mathcal{Q}(\nu_1,\nu_2)} A_\Phi(\nu_1,\nu_2), \\
     P^{\lambda}_{\Phi}(\nu_1,\nu_2)&:= \arg\inf_{\nu \in \mathcal{Q}(\nu_1,\nu_2)} A_\Phi(\nu_1,\nu_2),
\end{eqnarray}
 where $A_\Phi(\nu_1,\nu_2):=\inf \{\eta \in \mathbb{R}^{+}: 
 \int _{\mathbb{R}^d \times \mathbb{R}^d}\Phi (\|x-y\|/\eta)\,d\nu(x,y) -(1/\lambda) (H(\nu)) \leq 1\}$ with  $H(\mu)$  used to denote the Shannon entropy of distribution $\mu$. To obtain solutions for equation~\eqref{eq:surrogate_orlicz-Wasserstein}, we resort to using outputs from Sinkhorn divergence computations.

\begin{algorithm}[h]
\caption{Computing Orlicz Wasserstein distances between two discrete probability measures} 
\label{algo:compute_orlicz}
\begin{algorithmic}[1]
\STATE \textbf{Input} M, $\lambda, \vec{r}, \vec{c}, \epsilon$.
\STATE \textbf{Output} $W^{\lambda}_{\Phi}(\nu_1,\nu_2)$.
\STATE I = $(\vec{r}>0)$; $\vec{r} = \vec{r}(I)$; $M=M(I,:)$;
\STATE $x_{\text{upp}}=\max(M)/\Phi^{-1}(1)$, \\
$x_{\text{low}}=[S(M,\lambda,\vec{r},\vec{c}) +\dfrac{1}{2\lambda}(H(\vec{r})+H(\vec{c}))]/\Phi^{-1}(1+\dfrac{1}{\lambda}(H(\vec{r})+H(\vec{c})) $
\STATE $fx_{\text{upp}}=S(\Phi(M/x_{\text{upp}}),\lambda,\vec{r},\vec{c})$,$fx_{\text{low}}=S(\Phi(M/x_{\text{low}}),\lambda,\vec{r},\vec{c})$.

\STATE while $|x_{\text{low}}-x_{\text{upp}}|<\epsilon$ not converged do
\STATE $x_{\text{new}}=(x_{\text{low}}*fx_{\text{upp}}-x_{\text{upp}}*fx_{\text{low}})/(fx_{\text{upp}}-fx_{\text{low}}))$.
\STATE if $x_{\text{new}}<x_{\text{upp}}$ and $x_{\text{new}}>x_{\text{low}}$ do
\STATE $fx_{\text{new}}=S(\Phi(M/x_{\text{upp}}),\lambda,\vec{r},\vec{c})$
\STATE if $fx_{\text{new}}<1$, $x_{\text{upp}}=x_{\text{new}}, fx_{\text{upp}}=fx_{\text{new}}$.
\STATE else: $x_{\text{low}}=x_{\text{new}}, fx_{\text{low}}=fx_{\text{new}}$
\STATE end if
\STATE else $x_{\text{new}}= (x_{\text{low}}+x_{\text{upp}})/2$. repeat Step 9-12.
\STATE end if
\STATE end while
\STATE return $W^{\lambda}_{\Phi}(\nu_1,\nu_2):=x_{\text{upp}}$. 
\end{algorithmic}		
\end{algorithm}

Consider two discrete probability measures, $\vec{r}$ (with $m$ atoms, $\{x_i\}_{i=1}^m$) and $\vec{c}$ (with $n$ atoms, $\{y_i\}_{i=1}^m$). Let $M_{n\times m}$ be a distance matrix such that $M_{ij}=c(x_i,y_j)$ for some cost function $c(\cdot, \cdot)$. Let $S(M,\lambda,r,c)$ be used to denote the Sinkhorn divergence optimized objective function for cost matrix $M$, regularization parameter $\lambda$ and $d(M,\lambda,r,c)=\langle S(M,\lambda,r,c),M\rangle$ be used to denote the transport cost. Algorithm~\ref{algo:compute_orlicz} defines a procedure to obtain a regularised Orlicz-Wasserstein distance between $\nu_1=\sum_i r_i\delta_{x_i}$ and $\nu_2=\sum_i c_i\delta_{y_i}$ in such a scenario by iteratively updating the value of Orlicz-Wasserstein distance until convergence. The crucial intuition behind Algorithm~\ref{algo:compute_orlicz} is that $\inf_{\nu \in \mathcal{Q}(\nu_1,\nu_2)}\int _{\mathbb{R}^d \times \mathbb{R}^d}\Phi (\|x-y\|/\eta)\,d\nu(x,y) -(1/\lambda) (H(\nu))$ is a monotonically non-increasing function of $\eta$. Therefore the solution to the Orlicz Wasserstein distance can be obtained by a binary search once upper and lower limits are known. This is rigorously explained in Proposition~\ref{prop: algo_proof} in Appendix~\ref{ssection:Entropic_OT}. 

\vspace{0.5 em}
\noindent
\textbf{Simulations settings:} We provide a demonstration of the utility of using Orlicz-Wassestein distances in Figure~\ref{fig:sim-performance}. We consider two mixing densities, $\nu_1$ on the y-axis is a 3-mixture of univariate normal distributions with means at $[3,4,5]$, common $\sigma=0.3$ and mixture weights $[0.37,0.3,0.33]$. On the other hand $\nu_2$ represented in the x-axis is a 4-mixture of univariate Laplace kernels with means at $[7,8,9,6] $, scale parameters $[0.3,0.3,0.3,0.1]]$ and mixture weights $[0.30,0.32,0.32,0.06]$. The left plot of Figure~\ref{fig:sim-performance} shows the transportation plan for output of Sinkhorn mechanism with regularisation parameter $0.01$, while the right plot shows the same for transportation plan obtained via Algorithm~\ref{algo:compute_orlicz} with $\lambda=0.01$ ($\Phi(\cdot)=\exp(\cdot/\beta)-1$, $\beta=1.1$). We have the following remarks.

\vspace{0.5 em}
\noindent
\textbf{Remark:} The entropic Orlicz-Wasserstein  procedure produces sharper transport plans. This indicates that it performs a shrinkage procedure on the space of transportation plans. This can have potential benefits towards obtaining robust plans and provide a promising direction of future research. Additionally, while entropic Euclidean Wasserstein transport plans distribute the mass of the outlier atom of $\nu_2$ (mean=6, weight= 0.06), its Orlicz-Wasserstein counterpart manages to avoid it entirely. By penalizing mass transfers over large distances, Orlicz-Wasserstein distances are able to restrict attention to localised transportation plans. This in turn helps capture the small outlier mass associated with aposteriori DPGMM samples, as seen in Section~\ref{ssection:posterior_contraction_gaussian}. 

\section{Conclusion}
In this work, we discuss the shortcomings of traditional Wasserstein metrics to perform clustering with Gaussian mixture models. We re-introduce a novel metric, called Orlicz-Wasserstein distances, in the context of estimating parameter convergence rates of hierarchical and mixture models and provide sound theoretical justifications of its ability to address the concerns associated with traditional Wasserstein distances. We also provide a theoretically sound approximate algorithm to compute the distance metric, and also show that convergence rates of Orlicz-Wasserstein distances carry over to the approximate distance. Lastly, we provide a preliminary simulation study to initiate a discussion on future research with Orlicz-Wasserstein distances. Since they allow low/high penalty on mass transfers over large distances, depending on the choice of function $\Phi$, this lends flexibility to extending mass transfers over local/global regions and consequentially may be used as a device for smoothing/sharpening standard OT plans. Combined with dimension reduction techniques this can lend usage to a number of application domains such as  anomaly detection and robust optimal transport.

\section*{Acknowledgements}
We thank Professor Fedor Nazarov and Professor Mark Rudelson for discussions on the paper.
This research is supported in part by grants NSF CAREER DMS-1351362, NSF CNS-1409303, a research
gift from Adobe Research and a Margaret and Herman Sokol Faculty Award. Nhat Ho acknowledges support from the NSF IFML 2019844 and the NSF AI Institute for Foundations
of Machine Learning.

%We thank Prof. Fedor Nazarov and Prof. Mark Rudelson for discussions on certain key elements of the paper. This research is supported in part by grants NSF CAREER DMS-1351362, NSF CNS-1409303, a research
%gift from Adobe Research and a Margaret and Herman Sokol Faculty Award.
%\clearpage 

\bibliography{Aritra}

\newpage
\appendix
\onecolumn

%%%%%%%%%%Bibliography

\begin{center}
{\bf{\Large{Supplement to ``On Excess Mass Behavior in Gaussian Mixture Models with Orlicz-Wasserstein Distances"}}}
\end{center}
In this supplementary material, we present proofs of key results in Appendix~\ref{sec:proofs} and proofs of lemmas in Appendix~\ref{sec:proof_lemma}. We then provide theoretical guarantee for the algorithm to compute the entropic regularized Orlicz-Wasserstein in Appendix~\ref{ssection:Entropic_OT}. 

%%%%%%%%%%%%%%%%%%%%%%%%%%%%%%%%%%%%%%%%%%%%%%%%%%%%%%%%%%%%%%%%%%%%%%%%%%%%%%%%%%%%%
\section{Proofs of key results}
\label{sec:proofs}
\paragraph{Notation revisited} For any function $g:\mathcal{X} \to \mathbb{R}$, we denote $\widetilde{g}(\omega)$ as the Fourier transformation of function $g$. Given two densities $p, q$ (with respect to the Lebesgue measure $\mu$), the squared Hellinger distance is given by  $h^{2}(p,q)= {\displaystyle (1/2) \int (\sqrt{p(x)}-\sqrt{q(x)})^{2}\textrm{d}\mu(x)}$. 
For any metric $d$ on $\Theta$, we define the open ball of $d$-radius $\epsilon$ around $\theta_0 \in \Theta$ as $B_d(\epsilon,\theta_0)$. Additionally, the expression $a_{n} \gtrsim b_{n}$ will be used to denote the inequality up to a constant multiple where the value of the constant is independent of $n$. We also denote $a_{n} \asymp b_{n}$ if both $a_{n} \gtrsim b_{n}$ and $a_{n} \lesssim b_{n}$ hold. Furthermore, we denote $A^{c}$ as the complement of set $A$ for any set $A$ while $B(x,r)$ denotes the ball, with respect to the $l_2$ norm, of radius $r > 0$ centered at $x \in \mathbb{R}^{d}$. The expression $D(\epsilon,\mathscr{P},d)$ used in the paper denotes the $\epsilon$-packing number of the space $\mathscr{P}$ relative to the metric $d$. $d$ is replaced by $h$ to denote the hellinger norm. Finally, we use $\text{Diam}(\Theta)= \sup \{\|\theta_1-\theta_2\|: \theta_1,\theta_2 \in \Theta\}$  to denote the diameter of a given parameter space $\Theta$ relative to the $l_{2}$ norm, $\|\cdot\|$, for elements in $\mathbb{R}^{d}$. Regarding the space of mixing measures, let $\Ecal_{k}:=\Ecal_{k}(\Theta)$ and $\Ocal_{k}:=\Ocal_{k}(\Theta)$ respectively denote the space of all mixing measures with exactly and at most $k$ support points, all in $\Theta$. Additionally, denote $\Gcal : = \Gcal(\Theta) = \mathop{\cup} \limits_{k \in \mathbb{N}_{+}}{\Ecal_{k}}$ the set of all discrete measures with finite supports on $\Theta$. Moreover, $\overline{\Gcal}(\Theta)$ denotes the space of all discrete measures (including those with countably infinite supports) on $\Theta$. Finally, $\mathcal{M}(\Theta)$ stands for the space of all probability measures on $\Theta$.

\subsection{Proof of Theorem~\ref{theorem: gaussian_excess}}
\label{ssection: proof gaussian excess mass}

We present the proof of Theorem~\ref{theorem: gaussian_excess} for the lower bound of Hellinger distance between mixing density functions based on Orlicz-Wasserstein metric between their corresponding mixing measures.

In this proof, we denote $a \lesssim b $ to imply that $ a\leq C\cdot b$ for a universal constant $C$ dependent on $\alpha, d,$ and $ \bar{\theta}$. Also, $f \ast g$ will denote the outcome of convolution operation on functions $f$ and $g$. Now, we consider the following density function in $\mathbb{R}$:
\begin{eqnarray}
\label{eq:mollifier_gauss_excess}
     K(x):= c\left(\int_{-\infty}^{\infty} \exp(-itx)\exp(-t^4)\mathrm{d}t\right)^2,
\end{eqnarray}
where $c$ is a proportionality constant so that $\int_{-\infty}^{\infty} K(x)\mathrm{d}x =1$. Lemma~\ref{lemma: fourier transform} shows that $K(\cdot)$ is integrable. 

Moreover, Lemma~\ref{lemma: fourier transform_2} shows that the characteristic function $\hat{K}(\cdot)$, corresponding to $K(\cdot)$ satisfies,
\begin{eqnarray}
|\hat{K}(x)|\lesssim  \exp(-(x/2)^4) \nonumber.
\end{eqnarray}
The strategy to obtain upper bounds for $W_{\Phi}(G,G')$ is to convolve $G$ with mollifiers, $Z_{\delta,d}(\cdot)$, of the form $Z_{\delta,d}(x)=\prod_{i=1}^d \dfrac{1}{\delta}K(x_i/\delta)$ for $\delta>0$, where $x = (x_1, \dots,x_d)$. In particular, by triangle inequality and following Lemma~\ref{lemma: orlicz_metric} we can write:
\begin{eqnarray}
     W_{\Phi}(G,G')\leq  W_1(G, G \ast Z_{\delta,d}) + W_{\Phi}(G', G' \ast Z_{\delta,d})+ W_{\Phi}(G \ast Z_{\delta,d}, G' \ast Z_{\delta,d}). \nonumber
\end{eqnarray}
For $\Phi(x)= \exp((7/32)x)-1$, following Lemma~\ref{lemma:convolution_bound_orlicz_Wasserstein} we find that 
\begin{eqnarray}
     W_{\Phi}(G,G \ast Z_{\delta,d}) \leq C_{\alpha} \delta. \nonumber
\end{eqnarray}
Therefore, we can write 
\begin{eqnarray}
     W_{\Phi}(G,G')\leq 2C_{\alpha} \delta + W_{\Phi}(G \ast Z_{\delta,d}, G' \ast Z_{\delta,d}). \nonumber
\end{eqnarray}
For every $M>0$, we have
\begin{align}
 W_{\Phi}(G \ast Z_{\delta,d}, G' \ast Z_{\delta,d}) & \leq  2 \inf \{\lambda \in \mathbb{R}^{+}: \int _{\mathbb{R}^d }\Phi (\|x\|/\lambda)\cdot |(G-G') \ast Z_{\delta,d}(x)|\mathrm{d}x  \leq 1\}\nonumber \\
& \leq  2  \inf \{\lambda \in \mathbb{R}^{+}:  \ s_{1} \leq 1/2 \text { and } \ s_{2}  \leq 1/2\}, \nonumber  \\
& \leq 2 \max \{\inf \{\lambda \in \mathbb{R}^{+}:  \ s_{1} \leq 1/2\}, \inf \{\lambda \in \mathbb{R}^{+}:  \ s_{2} \leq 1/2\} \}, \label{eq:key_inequality_orlicz}
\end{align}
with the first inequality following from Lemma~\ref{lemma: orlicz_wasserstein difference} and the third inequality comes from the monotonicity of function $\Phi$. Here, we denote
\begin{align*}
    s_{1} & = \int \limits_{\|x\|_2 \leq M}\Phi (\|x\|/\lambda)\cdot |(G-G') \ast Z_{\delta,d}(x)|\mathrm{d}x, \\
    s_{2} & = \int \limits_{\|x\|_2 > M}{\Phi (\|x\|/\lambda)\cdot |(G-G') \ast Z_{\delta,d}(x)|\mathrm{d}x}.
\end{align*}
We now proceed to bound $T_{1} = \inf \{\lambda \in \mathbb{R}^{+}:  \ s_{1} \leq 1/2\}$ and $T_{2} = \inf \{\lambda \in \mathbb{R}^{+}:  \ s_{2} \leq 1/2\}$.
\paragraph{Bounding for $T_{1}$:}
 Using Holder's inequality, we obtain
\begin{eqnarray}
\label{eq: orlicz_convolution_proof_first}
 & &\inf \{\lambda \in \mathbb{R}^{+}: \int \limits_{\|x\|_2 \leq M}\Phi (\|x\|/\lambda)\cdot |(G-G') \ast Z_{\delta,d}(x)|\mathrm{d}x \leq 1/2 \} \nonumber \\ & \leq & \inf\{ \lambda >0: \int \limits_{\|x\| \leq M}{\exp((\|x\|/\lambda)^{\beta})\cdot |(G-G') * Z_{\delta,d}(x)|\mathrm{d}x} \leq 3/2\} \nonumber \\
  & \leq & \inf\biggr\{ \lambda >0: \biggr(\int \limits_{\|x\| \leq M} \exp((M/\lambda)^{\beta})\mathrm{d}x\biggr)^{1/2} \biggr(\int \limits_{\|x\| \leq M} |(G-G') * Z_{\delta,d}(x)|^{2}\mathrm{d}x\biggr)^{1/2} \leq 3/2 \biggr\} \nonumber \\
 &\leq &  \inf \biggr\{ \lambda >0: \dfrac{\pi^{d/4}}{\sqrt{(\frac{d}{2}+1)\Gamma(d/2)}}M^{d/2}\exp((M/\lambda)^{\beta}) \|(G-G') *Z_{\delta,d}(x)\|_{2} \leq 3/2 \biggr\}   \nonumber\\
  &=& \dfrac{M}{ (\log(c_d/(\|(G-G') *\zeta_{\delta,d}\|_{2}M^{d/2})))^{1/\beta}}.
\end{eqnarray}
Since $f$ is Gaussian distribution, we have $\tilde{f}(\omega) \geq c_f \exp(-\alpha \sum_{i=1}^d \omega_i^2)$ for some $c_f,\alpha>0$. Given that inequality, we find that
\begin{eqnarray}
\|(G-G') * Z_{\delta,d}\|_{2}^{2} & = & \int |\widetilde{G} - \widetilde{G'}|^{2}(\omega)|\widetilde{K}_{\delta,d}(\omega)|^{2}d\omega = \int |\widetilde{f}(\widetilde{G} - \widetilde{G'})|^{2}(\omega)\dfrac{|\widetilde{K}_{\delta,d}(\omega)|^{2}}{|\widetilde{f}(\omega)|^{2}}d\omega \nonumber \\
& \leq & \|p_{G} - p_{G'}\|_{2}^{2}\sup \limits_{\omega \in \mathbb{R}^{d}}\dfrac{|\widetilde{K}_{\delta,d}(\omega)|^{2}}{|\widetilde{f}(\omega)|^{2}} \nonumber\\
& \leq & 4 \|f\|_{\infty} h^{2}(p_{G},p_{G_{0}}) \sup \limits_{\omega \in \mathbb{R}^{d}} \biggr\{\frac{1}{c_f^2} \cdot  \prod \limits_{i=1}^{d} \exp(-\delta^{4}|\omega_i|^{4})\exp(2\alpha |\omega_i|^2) \biggr\}. \nonumber
\end{eqnarray}
By taking derivatives, we obtain the maximum as
\begin{align*}
\sup \limits_{\omega_i \in \mathbb{R}} \biggr\{ \exp(-\delta^{4}|\omega_i|^{4})\exp(2\alpha |\omega_i|^2) \biggr\}=\exp(\alpha^2/\delta^4).
\end{align*}
Plugging these results into equation~\eqref{eq: orlicz_convolution_proof_first} leads to 
\begin{align}
 \inf \{\lambda \in \mathbb{R}^{+}: \int \limits_{\|x\|_2 \leq M}\Phi (\|x\|/\lambda)\cdot |(G-G') \ast Z_{\delta,d}(x)|\mathrm{d}x \leq 1/2 \} & \nonumber \\  & \hspace{-12 em} \leq \dfrac{M}{ (\log(c/(  h(p_{G},p_{G_{0}}) \exp(\alpha^2d \delta^{-4})M^{d/2})))^{1/\beta}} \label{eq: orlicz_convolution_proof_first_first}
\end{align}
for some universal constant $c$.
\paragraph{Bounding for $T_{2}$:}
For any $M>0$, we denote 
\begin{eqnarray}
\label{eq:convex_phi}
k' & = & \inf \{\lambda \in \mathbb{R}^{+}: \mathbb{E}_{X \sim (G-G')}(\Phi (\|X\|^{5/4}/\lambda M^{1/4})\leq 1/2\}, \nonumber \\ 
k''& = &\inf \{\lambda \in \mathbb{R}^{+}: \mathbb{E}_{ Y \sim Z_{\delta,d}}(\Phi (\|Y\|^{5/4}/\lambda M^{1/4})\leq 1/2\}.
\end{eqnarray}
Then, by the convexity of $\Phi$ we have 
\begin{align*}
   \inf \{\lambda \in \mathbb{R}^{+}: \mathbb{E}_{X \sim G-G', Y \sim Z_{\delta,d}}(\Phi (\|X+Y\|^{5/4}/ \lambda M^{(1/4)})\leq 1/2\} \leq 2^{1/4}(k'+k").
\end{align*}
The above inequality is because of the following inequalities:
\begin{eqnarray}
     & &\mathbb{E}_{X \sim G-G', Y \sim Z_{\delta,d}}(\Phi (\|X+Y\|^{5/4}/2^{1/4}(k'+k")M^{(1/4)}) \nonumber \\ &\leq &\mathbb{E}_{X \sim G-G', Y \sim Z_{\delta,d}}(\Phi (2^{1/4}(\|X\|^{5/4}+\|Y\|^{5/4})/2^{1/4}(k'+k")M^{(1/4)})) \nonumber\\ &= &\mathbb{E}_{X \sim G-G', Y \sim Z_{\delta,d}} \left(\Phi \left(\left(k'\|X\|^{5/4}+\|Y\|^{5/4}\right)/(k'+k")M^{(1/4)}\right) \right) \nonumber \\ &\leq &\mathbb{E}_{X \sim G-G', Y \sim Z_{\delta,d}} \left(\Phi \left(\dfrac{k'}{k'+k"}\left(\dfrac{\|X\|^{5/4}}{k'M^{(1/4)}}\right)+ \dfrac{k"}{k'+k"}\left(\dfrac{\|Y\|^{5/4}}{k" M^{(1/4)}}\right)\right) \right)\nonumber \\ &\leq &\mathbb{E}_{X \sim G-G', Y \sim Z_{\delta,d}} \dfrac{k'}{k'+k"} \Phi \left(\dfrac{\|X\|^{5/4}}{k'M^{(1/4)}} \right)+ \dfrac{k"}{k'+k"}\Phi\left(\dfrac{\|Y\|^{5/4}}{k" M^{(1/4)}}\right) \leq \dfrac{1}{2}.
\end{eqnarray}
The first inequality follows from $\|a+b\|^P \leq 2^{p-1}(\|a\|^p+\|b\|^p)$. The second last inequality follows from convexity of $\Phi$ and the final inequality follows from equation~\eqref{eq:convex_phi}. Therefore, we obtain that
%%%%%%%%%%%%%%%%%%%%%%%%%%%%%%%%%%%%%%%%%%
\begin{eqnarray}
\label{eq:s2_gaussian_excess}
     \inf \{\lambda \in \mathbb{R}^{+}:  & & \int \limits_{\|x\|_2 > M}{\Phi (\|x\|/\lambda)\cdot |(G-G') \ast Z_{\delta,d}(x)|\mathrm{d}x}\leq 1/2\} \nonumber \\ & \leq & \inf \{\lambda \in \mathbb{R}^{+}: \int \limits_{\|x\|_2 > M}{\Phi (\|x\|^{5/4}/\lambda M^{(1/4)})\cdot |(G-G') \ast Z_{\delta,d}(x)|\mathrm{d}x}\leq 1/2\} \nonumber \\ & \leq & \inf \{\lambda \in \mathbb{R}^{+}: \mathbb{E}_{X \sim G-G', Y \sim Z_{\delta,d}}(\Phi (\|X+Y\|^{5/4}/\lambda M^{(1/4)})\leq 1/2\}\nonumber \\   & \lesssim &  \inf\{ \lambda >0: \int \limits_{\mathbb{R}^d}{\exp((\|x\|^{5/4}/\lambda M^{1/4})^{\beta})\cdot |(G-G') (x)|\mathrm{d}x} \leq 3/2\}\nonumber\\ 
     & + &\inf\{ \lambda >0: \int \limits_{\mathbb{R}^d}{\exp((\|x\|^{5/4}/\lambda M^{1/4})^{\beta})\cdot | Z_{\delta,d}(x)|\mathrm{d}x} \leq 3/2\}\nonumber\\
     & \lesssim& \dfrac{(d \bar{\theta})^{5/4}}{ M^{1/4}} + C \delta^{5/4}/M^{1/4},
\end{eqnarray}
where $C= \inf\{ \lambda >0: \int \limits_{\mathbb{R}^d}{\exp((\|x\|^{5/4}/\lambda)^{\beta})\cdot | K_{1,d}(x)|\mathrm{d}x} < \infty $ as $K_{1,d}(x) \sim O(\exp(-|x|^{4/3}))$ for large $|x|$, by Lemma~\ref{lemma: fourier transform}. Hence, using these results we get
 \begin{eqnarray}
      \label{eq:s2_gaussian_excess_extend}
      W_{\Phi}(G ,G') &\lesssim& \delta+ \max\biggr\{\dfrac{(d \bar{\theta})^{5/4}}{ M^{1/4}} + C \delta^{5/4}/M^{1/4} , \dfrac{M}{ (\log(c/(  h(p_{G},p_{G_{0}}) \exp(\alpha^2d \delta^{-4})M^{d/2})))^{1/\beta}}\biggr\} \nonumber \\
      & \leq & \delta+ \dfrac{(d \bar{\theta})^{5/4}}{ M^{1/4}} + C \delta^{5/4}/M^{1/4} + \dfrac{M}{ (\log(c/(  h(p_{G},p_{G_{0}}) \exp(\alpha^2d \delta^{-4})M^{d/2})))^{1/\beta}}. 
      \end{eqnarray}
 Choosing $M=(\log(1/h(p_{G},p_{G_{0}})))^{1/2}$ and $\delta= \dfrac{2\alpha^2}{\log(1/h(p_{G},p_{G_{0}}))}$ in equation~\eqref{eq:s2_gaussian_excess_extend} we obtain,
 \begin{eqnarray}
 W_{\Phi}(G,G') \lesssim (\log(1/h(p_{G},p_{G_{0}})))^{-1} &+& \dfrac{(d \bar{\theta})^{5/4}}{(\log(1/h(p_{G},p_{G_{0}})))^{1/8}} + \left(\dfrac{1}{\log(1/h(p_{G},p_{G_{0}}) )}\right)^{11/8} \nonumber \\ & + &\left(\dfrac{1}{\log(c/h(p_{G},p_{G_{0}})(\log(1/h(p_{G},p_{G_{0}})))^{d/4})}\right)^{(1/\beta)-(1/2)}
 \end{eqnarray}
 As a consequence, we obtain the conclusion of the theorem.
\subsection{Proof of
Theorem~\ref{theorem:posterior_gaussian_excess}}
\label{subsec:proof:theorem:gaussian_excess}

The proof of this result follows by an application of  Lemma~\ref{lemma:Prior_mass_DPMM},~\ref{lemma:Hellinger_metric_entropy} and~\ref{lemma:M_compute} in combination with Theorem 2.1 in~\cite{Ghosal-Ghosh-vanderVaart-00}. To facilitate the presentation, we break the proof into several steps.
\paragraph{Step 1:} First we compute the contraction rate relative to the Hellinger metric, i.e., assume that 
\begin{align*}
\frac{\bar{\theta}^{d}}{\epsilon^{d+2}_n} \log\left(\frac{\bar{\theta}}{\epsilon_n}\right) = o(n) \ \ \text{and} \ \ n \epsilon^2_n \rightarrow \infty.
\end{align*}
Then we show that \begin{eqnarray}
    \Pi_n(G \in \overline{\Gcal}(\Theta): h(p_G,p_{G_0}) \geq L\epsilon_n| X_1,\ldots, X_n) \overset{P_{G_0}}{\rightarrow} 0.
\end{eqnarray}
We apply Theorem 7.1 in~\cite{Ghosal-Ghosh-vanderVaart-00}, with $\epsilon=L\epsilon_n$ and $D(\epsilon)=\exp\left(c_1 \left(\dfrac{\bar{\theta}}{\sqrt{\lambda_{\text{min}}}\epsilon_n}\right)^d \log\left(e + \dfrac{32e \bar{\theta}^2}{\lambda_{\text{min}}\epsilon_n^2}\right)\right)$, where $L \geq 2$ is a large constant to be chosen later and $c_1$ is the constant in equation~\eqref{eq:hellinger_entropy_gaussian}. Lemma~\ref{lemma:Hellinger_metric_entropy} shows the validity of this choice of $D(\epsilon)$. Then there exists a test function $\phi_n$ that satisfies
\begin{eqnarray}
     \label{eq:ghosal_test_function}
     P_{G_0}^n\phi_n & \leq & \exp\left(c_1 \left(\dfrac{\bar{\theta}}{\sqrt{\lambda_{\text{min}}}\epsilon_n}\right)^d \log\left(e + \dfrac{32e \bar{\theta}^2}{\lambda_{\text{min}}\epsilon_n^2}\right)\right) \nonumber\\
     & & \hspace{3 em} \times \exp(-KnL^2\epsilon_n^2)\dfrac{1}{1-\exp(-KnL^2\epsilon_n^2)} ,\nonumber\\
     \sup_{G \in \overline{\Gcal}(\Theta): h(p_G,p_{G_0}) \geq L\epsilon_n} P_G^n (1-\phi_n) & \leq &\exp(-KnL^2\epsilon_n^2).
\end{eqnarray}
Now, we have
\begin{eqnarray}
\label{eq:type1error_gaussian}
     & &\mathbb{E}_{P_{G_0}}\Pi_n(G \in \overline{\Gcal}(\Theta): h(p_G,p_{G_0}) \geq L\epsilon_n| X_1,\ldots, X_n)\phi_n \nonumber \\ 
     & & \hspace{6 em}\leq P_{G_0}^n\phi_n \leq 2 \exp\left(c_1 \left(\dfrac{\bar{\theta}}{\sqrt{\lambda_{min}}\epsilon_n}\right)^d \log\left(e + \dfrac{32e \bar{\theta}^2}{\lambda_{min}\epsilon_n^2}\right) -KnL^2\epsilon_n^2\right).
\end{eqnarray}
Based on computation with the posterior,
\begin{eqnarray}
    & & \Pi_n(G: h(p_G,p_{G_0} \geq \epsilon_n)|X_1,\ldots,X_n)(1-\phi_n)  \nonumber\\ & &   \hspace{5 em} =\dfrac{\bigintsss_{G\in \overline{\Gcal}(\Theta) : h(p_G,p_{G_0}) \geq \epsilon_n} \prod_{i=1}^n \dfrac{p_G(X_i)}{p_{G_0}(X_i)} \mathrm{d}\Pi_n(G) (1-\phi_n)}{\bigintsss_{G\in \overline{\Gcal}(\Theta)} \prod_{i=1}^n \dfrac{p_G(X_i)}{p_{G_0}(X_i)} \mathrm{d}\Pi_n(G)} \nonumber \\
     & & \hspace{5 em} \leq  \dfrac{\bigintsss_{G\in \overline{\Gcal}(\Theta) : h(p_G,p_{G_0}) \geq \epsilon_n} \prod_{i=1}^n \dfrac{p_G(X_i)}{p_{G_0}(X_i)} \mathrm{d}\Pi_n(G) (1-\phi_n)}{\bigintsss_{G\in \overline{\Gcal}(\Theta) : K(p_{G_0},p_G) \lesssim \epsilon^2_n, K_2(p_{G_0},p_G) \lesssim \epsilon^2_n (\log (M/\epsilon_n))^2 } \prod_{i=1}^n \dfrac{p_G(X_i)}{p_{G_0}(X_i)} \mathrm{d}\Pi_n(G)}  \nonumber,
     \end{eqnarray}
where $M=\exp(d\lambda_{min}^{-1} (  5\bar{\theta}_0^2 + 4\bar{\theta}^{2}))$, with $\lambda_{min}$ being the minimum eigenvalue of $\Sigma$.

\paragraph{Step 1.1:}
In this step we show that 
\begin{eqnarray}
\label{eq:KL_integral}
& &{\bigintsss_{G\in \overline{\Gcal}(\Theta) : K(p_{G_0},p_G) \lesssim \epsilon^2_n, K_2(p_{G_0},p_G) \lesssim \epsilon^2_n (\log (M/\epsilon_n))^2 } \prod_{i=1}^n \dfrac{p_G(X_i)}{p_{G_0}(X_i)} \mathrm{d}\Pi_n(G)} \nonumber\\
     & & \hspace{5 em} \gtrsim \exp(-(1+C)n \lambda_{min}\epsilon_n^2)\frac{\Gamma(\gamma) (c_0\gamma\pi^{d/2})^{D}}{(2\Gamma(d/2+1))^{D}(2D)^{D-1}} 
		\left(\frac{\sqrt{\lambda_{min}}\epsilon_n}
		{2\bar{\theta}}\right)^{2(D-1) +dD} \\
		& & \hspace{ 25 em} \text{with $p_{G_0}^n$ probability}\rightarrow 1, \nonumber
\end{eqnarray}
for all $C>0$ and $\epsilon_n>0$ is sufficiently small, where $D = D(\sqrt{\lambda_{min}}\epsilon_n, \Theta, \|.\|) \approx \left(\dfrac{\bar{\theta}}{\epsilon_n} \right)^d$ stands for the maximal $\sqrt{\lambda_{min}}\epsilon_n$-packing number for $\Theta$ under $\|.\|$ norm, and $\Gamma(\cdot)$ is the gamma function. 
First we show that
\begin{eqnarray}
\label{eq:K_Wasserstein}
 & & \{G \in \overline{\Gcal}(\Theta) : W_2(G,G_0) \lesssim \sqrt{\lambda_{min}}\epsilon_n\} \nonumber \\
 & & \hspace{5 em} \subset \{G\in \overline{\Gcal}(\Theta) : K(p_{G_0},p_G) \lesssim \epsilon^2_n, K_2(p_{G_0},p_G) \lesssim \epsilon^2_n (\log (M/\epsilon_n))^2 \},  
\end{eqnarray}
for $\epsilon_n$ sufficiently small.

Since $\bigintss \dfrac{(p_{G_0}(x))^2}{p_G(x)}\mu(\mathrm{d}x) \leq M$ by Lemma~\ref{lemma:M_compute}, it follows by an application of Theorem 5 in~\cite{Wong-Shen-95} that for $\epsilon_n < 1/2(1-e^{-1})^2$, 
\begin{eqnarray}
   h(p_G,p_{G_0}) \lesssim \epsilon_n^2 \implies K_2(p_{G_0},p_G) \lesssim \epsilon^2_n (\log (M/\epsilon_n))^2. \nonumber
\end{eqnarray}
Following Example 1 in~\cite{Nguyen-13}, $h^2(p_G,p_{G_0}) \leq \dfrac{W_2^2(G,G_0)}{8 \lambda_{min}}$ for Gaussian location mixtures.

Similarly, from~\cite{Nguyen-13} it also follows that $K(p_G,p_{G_0}) \leq \dfrac{W_2^2(G,G_0)}{2 \lambda_{min}} $. Combining the above displays, equation~\eqref{eq:K_Wasserstein} follows.

Following Lemma 8.1 in~\cite{Ghosal-Ghosh-vanderVaart-00}, for every $C, \epsilon, M >0$ and any measure $\Pi$ on the set $ \{G\in \overline{\Gcal}(\Theta) : K(p_{G_0},p_G) \lesssim \epsilon^2_n, K_2(p_{G_0},p_G) \lesssim \epsilon^2_n (\log (M/\epsilon_n))^2 \}$, we have,
\begin{eqnarray}
\label{eq:Ghosal_ghosh_VDV_KL_result}
    P_{G_0}^n \left(\int \prod_{i=1}^n \dfrac{p_G(X_i)}{p_{G_0}(X_i)} \mathrm{d}\Pi_n(G) \leq \exp(-(1+C)n\epsilon^2)\right)\leq \dfrac{1}{C^2n\epsilon^2(\log(M/\epsilon))^2}.
\end{eqnarray}
The result in equation~\eqref{eq:KL_integral} now follows by an application of Lemma~\ref{lemma:Prior_mass_DPMM} in combination with equations~\eqref{eq:K_Wasserstein} and~\eqref{eq:Ghosal_ghosh_VDV_KL_result} using the fact that $n\epsilon_n^2 \rightarrow \infty$.

\paragraph{Step 1.2: } Let the event in ~\eqref{eq:KL_integral} be denoted as $T_n$. Then
\begin{eqnarray} 
 \label{eq:type2error_gaussian}
    & &\mathbb{E}_{P_{G_0}}\left[\Pi_n(G: h(p_G,p_{G_0}) \geq L\epsilon_n)|X_1,\ldots,X_n)(1-\phi_n)\right] \leq P_{G_0}(T_n^C) \nonumber \\&+& P_{G_0}(T_n) \dfrac{\exp((1+C)n \lambda_{min}\epsilon_n^2)}{\frac{\Gamma(\gamma) (c_0\gamma\pi^{d/2})^{D}}{(2\Gamma(d/2+1))^{D}(2D)^{D-1}} 
		\left(\frac{\sqrt{\lambda_{min}}\epsilon_n}
		{2\bar{\theta}}\right)^{2(D-1) +dD} } \sup_{G \in \overline{\Gcal}(\Theta): h(p_G,p_{G_0}) \geq L\epsilon_n} P_G^n (1-\phi_n) \nonumber\\ &\lesssim & \dfrac{\exp((1+C)n \lambda_{min}\epsilon_n^2)}{\frac{\Gamma(\gamma) (c_0\gamma\pi^{d/2})^{D}}{(2\Gamma(d/2+1))^{D}(2D)^{D-1}} 
		\left(\frac{\sqrt{\lambda_{min}}\epsilon_n}
		{2\bar{\theta}}\right)^{2(D-1) +dD} }\exp(-KnL^2\epsilon_n^2) +o(1).
\end{eqnarray}
The final step follows from simple computation similar to that of the Proof of Theorem 2.1 in~\cite{Ghosal-Ghosh-vanderVaart-00} and using the fact that $\dfrac{\bar{\theta}^{d}}{\epsilon^{d+2}_n} \log\left(\dfrac{\bar{\theta}}{\epsilon_n}\right) = o(n)$.
Combining equations~\eqref{eq:type1error_gaussian} and~\eqref{eq:type2error_gaussian} and using the condition $\dfrac{\bar{\theta}^{d}}{\epsilon^{d+2}_n} \log\left(\dfrac{\bar{\theta}}{\epsilon_n}\right) = o(n)$, it follows that for $L$ large enough 
\begin{eqnarray}
    \Pi_n(G \in \overline{\Gcal}(\Theta): h(p_G,p_{G_0}) \geq L\epsilon_n| X_1,\ldots, X_n) \overset{P_{G_0}}{\rightarrow} 0.
\end{eqnarray}

\paragraph{Step 2:} For some sufficiently large $L$ with $\epsilon_n=L(\log n)n^{-1/(d+2)}$ satisfies $\dfrac{\bar{\theta}^{d}}{\epsilon^{d+2}_n} \log\left(\dfrac{\bar{\theta}}{\epsilon_n}\right) = o(n)$. Therefore we get, from the result in Step 1of this proof
 \begin{eqnarray*}
    \Pi_n \biggr( G \in \overline{\Gcal}(\Theta): h(p_G,p_{G_0}) \geq  \dfrac{L(\log n)}{n^{1/(d+2)}} \; \bigg| \; X_{1:n} \biggr) \overset{P_{G_0}^{n}}{\rightarrow} 0.
\end{eqnarray*}
Now, from Theorem~\ref{theorem: gaussian_excess}, we have
 \begin{eqnarray}
    &\Pi_n \biggr(G \in \overline{\Gcal}(\Theta): W_{\Phi}(G,G_0) \geq  f_1(n,d) \; \bigg| \; X_{1:n}\biggr) \overset{P_{G_0}^{n}}{\rightarrow} 0, \nonumber
\end{eqnarray}
where $f_1(n,d):=(\log(n)/(d+2)- \log(\log n))^{-1/8}$.

\subsection{Proof of Corollary~\ref{corollary:gaussian_excess}}
\label{proof:corollary}

Let $G_0=\sum_{i=1}^{k_0} p^0_i\delta_{\theta^0_i}$, $G=\sum_{j=1}^k p_i\delta_{\theta_i}$. 
Suppose $\vec{q}=(q_{ij})_{1 \leq i \leq k_0,1\ \leq j \leq k} \in [0,1]^{k_0 \times k}$ is a coupling between $\vec{p_0}=(p^0_{1}, \ldots, p^0_{k_0})$ and
$\vec{p}=(p_{1}, \ldots, p_{k})$, with $\mathcal{Q}(\vec{p},\vec{p'})$ represents the space of all such couplings of $\vec{p_0}$ and $\vec{p}$. Using the proof technique similar to Lemma~\ref{lemma: orlicz_bound}, we get
\begin{align}
&\sum q_{ij}\exp ((\|\theta_i^0-\theta_j \|/k)^{\beta}) \nonumber \\ & \geq \sum q_{ij} \mathbbm{1}_{ \{ \|\theta_i^0-\theta_j \| \geq \eta \} }\exp ((\eta/k)^{\beta}) \nonumber \\
& \geq \sum p_{j} \mathbbm{1}_{ \{ \|\theta_i^0-\theta_j \| \geq \eta \text{ for all } i \} }\exp ((\eta/k)^{\beta}),\nonumber
\end{align}
for all $1 < \beta < 16/15$.

We denote $K= \inf\{\lambda \geq 0:  \sum p_{j} \mathbbm{1}_{ \{ \|\theta_i^0-\theta_j \| \geq \eta \text{ for all } i \}}\exp ((\eta/\lambda)^{\beta}) \leq 2 \}$. Then, we find that
\begin{align*}
& K \geq \eta \left( \log \left(\dfrac{1}{\sum p_{j} \mathbbm{1}_{ \{ \|\theta_i^0-\theta_j \| \geq \eta \text{ for all } i \}}}\right)\right)^{-1/\beta}, \quad \text{and} \\
& \sum_j p_j\mathbbm{1}_{ \{ \|\theta_j-\theta^0_i\|>\eta \text{ for all } i \} } \leq 2 \exp\left(\dfrac{-\eta}{W_{\Phi}(G,G_0)}\right). \nonumber
\end{align*}
Putting these results together with Theorem~\ref{theorem:posterior_gaussian_excess} leads to 
\begin{align*}
    \Pi_n \biggr(G \in \excessmass_{\eta} \left(\Theta, 2\exp\left( -\left(\dfrac{\eta\log(n)^{1/8}}{(d+2)}\right)^{\beta}\right)\right) \; \bigg| \;  X_{1:n}\biggr) \overset{P_{G_0}}{\rightarrow} 0
\end{align*}
in $P_{G_0}^n$ probability. Since this result holds for all $1<\beta< 16/15$, we obtain the conclusion.

\section{Proofs for Lemmas}
\label{sec:proof_lemma}
We now present the proofs for all lemmas in Section~\ref{section:gaussian_excess_mass}.
\subsection{Proof of Lemma~\ref{lemma: orlicz_metric}}
\label{subsec:proof:lemma: orlicz_metric}
We need to show the following properties of Orlicz-Wasserstein:
\begin{enumerate}
    \item[(i)] $W_{\Phi}(\nu_1,\nu_2)=W_{\Phi}(\nu_2,\nu_1)$ for any probability measures $\nu_1,\nu_2$ on $(\mathbb{R}^d,\|\cdot\|)$.
    \item[(ii)] $W_{\Phi}(\mu,\mu)=0$ for any probability measure $\mu$ on $(\mathbb{R}^d,\|\cdot\|)$.
    \item[(iii)] $W_{\Phi}(\nu_1,\nu_2) \leq W_{\Phi}(\nu_1,\nu_3) +W_{\Phi}(\nu_3,\nu_2) $ for any probability measures $\nu_1,\nu_2,\nu_3$ on $(\mathbb{R}^d,\|\cdot\|)$.
\end{enumerate}
(i) follows easily from the fact $\|x-y\|= $ is symmetric with respect to $x,y \in \mathbb{R}^{d}$ .

For  (ii) consider the coupling, $\nu(x,y)=\mu(x) \mathbbm{1}_{x=y}$, then it is clear to see that for any $k>0$, $\int _{\mathbb{R}^d \times \mathbb{R}^d}\Phi (\|x-y\|/k)\,d\nu(x,y)=1$ and therefore $W_{\Phi}(\mu,\mu)=0$.

For part (iii), assume that $W_{\Phi}(\nu_1,\nu_3)= k_1, W_{\Phi}(\nu_3,\nu_2)=k_2$. Then, it is enough to show that there exists a coupling $\nu$ of $\nu_1$ and $\nu_2$ such that $\int _{\mathbb{R}^d \times \mathbb{R}^d}\Phi (\|x-y\|/(k_1 + k_2))\, \mathrm{d}\nu(x,y)\leq 1$.

By results from~\cite{Villani-03,Villani-09}, there exists a coupling $\mu_1$ of $\nu_1$ and $\nu_3$ and a coupling $\mu_2$ of $\nu_2$ and $\nu_3$ such that,
\begin{eqnarray}
\int _{\mathbb{R}^d \times \mathbb{R}^d}\Phi (\|x-z\|/k_1)\,d\mu_1(x,z) &\leq & 1 \nonumber\\
\int _{\mathbb{R}^d \times \mathbb{R}^d}\Phi (\|z-y\|/k_2)\,d\mu_2(y,z) &\leq & 1.
\end{eqnarray}
Then, by a result in probability theory there exists a probability measure $\mu$ on $\mathbb{R}^d \times \mathbb{R}^d \times \mathbb{R}^d$ such that 
\begin{eqnarray}
\int_{x \in \mathbb{R}^d}  \mu(\mathrm{d}x,y,z) & = & \mu_2(y,z) \nonumber \\
\int_{x \in \mathbb{R}^d}  \mu(x,\mathrm{d}y,z) & = & \mu_1(x,z)
\end{eqnarray}
Define $\nu(x,y) :=\int_{z \in \mathbb{R}^d}  \mu(x,y,\mathrm{d}z) $. Then, we obtain that
\begin{eqnarray}
& &\int _{\mathbb{R}^d \times \mathbb{R}^d}\Phi (\|x-y\|/(k_1 + k_2))\, \mathrm{d}\nu(x,y) \nonumber \\ & = & \int _{\mathbb{R}^d \times \mathbb{R}^d \times\mathbb{R}^d}\Phi (\|x-y\|/(k_1 + k_2))\, \mathrm{d}\mu(x,y,z) \nonumber \\
& \leq & \int _{\mathbb{R}^d \times \mathbb{R}^d \times\mathbb{R}^d}\Phi ((\|x-z\| + \|y-z\|)/(k_1 + k_2))\, \mathrm{d}\mu(x,y,z) \nonumber\\
& \leq & \int _{\mathbb{R}^d \times \mathbb{R}^d \times\mathbb{R}^d}\Phi \left(\dfrac{k_1}{k_1+k_2}\dfrac{\|x-z\|}{k_1}+ \dfrac{k_2}{k_1+k_2}\dfrac{\|y-z\|}{k_2} \right)\, \mathrm{d}\mu(x,y,z) \nonumber\\& \leq & \dfrac{k_1}{k_1+k_2} \int _{\mathbb{R}^d \times \mathbb{R}^d }\Phi \left(\dfrac{\|x-z\|}{k_1}\right)\, \mathrm{d}\mu_1(x,z) \nonumber\\
& + & \dfrac{k_2}{k_1+k_2} \int _{\mathbb{R}^d \times \mathbb{R}^d }\Phi \left(\dfrac{\|y-z\|}{k_2}\right)\, \mathrm{d}\mu_2(y,z) \leq 1. \nonumber
\end{eqnarray}
The first inequality follows from the triangle inequality property of $\| \cdot\|$, while the last inequality follows from the convexity of $\Phi$. 
%%%%%%%%%%%%%%%%%%%%%%%%%%%%%%%%%%%%%%%%%%%%%%%%%%%%%%%%%%%%%%%%%%%%%%%%%%%%%%%%%%%%%%%%%%%%%%%%%%%

\subsection{Proof of Lemma~\ref{lemma: orlicz properties}}
\label{subsec:proof:lemma: orlicz properties}

Fix a coupling $\nu$ of $\nu_1$ and $\nu_2$. Consider $\lambda$ satisfying
\begin{eqnarray}
    & & \int _{\mathbb{R}^d \times \mathbb{R}^d}\Phi (\|x-y\|/\lambda)\,d\nu(x,y) < 
    \infty, \nonumber \\
    & & \int_{\mathbb{R}^d \times \mathbb{R}^d}\Psi (\|x-y\|/\lambda)\,d\nu(x,y) < \infty, \nonumber \\  
   & &\int _{\mathbb{R}^d \times \mathbb{R}^d}\Phi (\|x-y\|/\lambda)\,d\nu(x,y) \leq \int _{\mathbb{R}^d \times \mathbb{R}^d}\Psi (\|x-y\|/\lambda)\,d\nu(x,y), \nonumber
\end{eqnarray}
and thus, we find that
\begin{eqnarray}
    \biggr\{\lambda: &\int _{\mathbb{R}^d \times \mathbb{R}^d}\Psi (\|x-y\|/\lambda)\,d\nu(x,y) \leq 1 \biggr\} \\ &\subset \biggr\{\lambda: \int _{\mathbb{R}^d \times \mathbb{R}^d}\Phi (\|x-y\|/\lambda)\,d\nu(x,y) \leq 1 \biggr\} \nonumber.
\end{eqnarray}

As a consequence, we obtain the conclusion of Lemma~\ref{lemma: orlicz properties} since infimum of a set is smaller than the infimum of its subset.

%%%%%%%%%%%%%%%%%%%%%%%%%%%%%%%%%%%%%%%%%%%%%%%%%%
%%%%%%%%%%%%%%%%%%%%%%%%%%%%%%%%%%%%%%%%%%%%%%%%%%
\subsection{Proof of Lemma~\ref{lemma:convolution_bound_orlicz_Wasserstein}}
\label{subsec:proof:lemma:convolution_bound_orlicz_Wasserstein}
% Following the result from Lemma~\ref{lemma: orlicz properties}, it is enough to show $W_{\Psi}(\nu_1, \nu_2) \lesssim \delta^{d} $, where $\Psi(x)= \exp(x^\alpha)-1$. 

Consider $X \sim \nu_1$ and $Y \sim  Z_{\delta,d}$. Let $K$ be such that 
$$
\int_{\mathbb{R}} \exp((7/32)|y_i/K|^{\alpha}  - (7/16)|y_i/\delta|^ {4/3}) \mathrm{d}y_i < \infty.
$$
Then, we find that
\begin{eqnarray}
          & &\inf_{\mu}\biggr\{\int _{\mathbb{R}^d \times \mathbb{R}^d}\Phi (\|x-y\|/\lambda)\,d\mu(x,y): \mu \in \mathcal{Q}(\nu_1,\nu_2)\biggr\} \nonumber \\ &\leq & \left(\dfrac{1}{\delta}\right)^{d}\int_{\mathbb{R}^d} \exp((7/32)\|y\|^\alpha/\lambda^{\alpha}) \prod_{i=1}^d K_1(y_i/\delta) \prod_{i=1}^d \mathrm{d}y_i -1 \nonumber\\
         & \leq &\prod_{i=1}^d \left(\dfrac{1}{\delta}\right)\int_{\mathbb{R}} \exp((7/32)|y_i|^\alpha/\lambda^{\alpha})  K_1(y_i/\delta) \mathrm{d}y_i -1\nonumber \\
         & = & \prod_{i=1}^d \left(\dfrac{1}{\delta}\right)\int_{\mathbb{R}} \phi(y_i)^2 \exp((7/32)|y_i/\lambda|^{\alpha}   - (7/16)|y_i/\delta|^ {4/3}) \mathrm{d}y_i -1 \nonumber, 
\end{eqnarray}
where $\phi(\cdot)$ is the function in Lemma~\ref{lemma: fourier transform}. The second inequality follows from the fact that $\|x\|_{p} \leq \|x\|_{q}$ when $p \geq q$, where $\|\cdot\|_{p}$ is the $L_p$ norm. The final equality follows from Lemma~\ref{lemma: fourier transform}. Now, as $|\phi(x)| \leq C_{\phi}$ for some constant $C_{\phi} < \infty$, we have following the result in Lemma~\ref{lemma: orlicz properties},
% \begin{eqnarray}
% \label{eq: gaussian_orlicz}
%          \inf\biggr\{\int _{\mathbb{R}^d \times \mathbb{R}^d}\Phi (\|x-y\|/k)\,d\mu(x,y): \mu \in \mathcal{Q}(\nu_1,\nu_2)\biggr\}  \leq \prod_{i=1}^d \left(\dfrac{1}{\delta}\right)\int_{\mathbb{R}} C_{\phi} \exp(|y_i/k|^{\alpha}   -|y_i/\delta|^ {4/3}) \mathrm{d}y_i -1.
% \end{eqnarray}
% Therefore,
\begin{eqnarray}
W_{\Phi}(\nu_1,\nu_2) \leq  C_{\alpha}\delta \nonumber
\end{eqnarray}
where 
\begin{eqnarray}
 C_{\alpha}=\inf\biggr\{ k>0: \int_{\mathbb{R}}\exp(|y/k|^{\alpha}   -|y|^ {4/3}) \mathrm{d}y -1 \leq \dfrac{1}{C_{\phi}^2}\biggr\} \nonumber.
\end{eqnarray}
Note that, $ C_{\alpha}$ as defined above exists because $\alpha \leq 4/3$. As a consequence, we obtain the conclusion of the lemma.
%%%%%%%%%%%%%%%%%%%%%%%%%%%%%%%%%%%%%%%%%%%%%%%%%%%%%%%%%%%%%%%%%%%%%%%%%%%%%%%%%%%%%%%%%%%%%%%%%%%%%%%% 
\subsection{Proof of Lemma~\ref{lemma: orlicz_wasserstein difference}}
\label{subsec:proof:lemma: orlicz_wasserstein difference}

Consider a coupling, $\nu$ between $\nu_1$ and $\nu_2$ that keeps fixed all the mass shared between $\nu_1$ and $\nu_2$, and redistributes the remaining mass independently, i.e.,
\begin{eqnarray}
\nu(x,y)= (\nu_1(x) \bigwedge \nu_2(y) ) \mathbbm{1}_{x=y} + \dfrac{1}{(\nu_1-\nu_2)_{+}(\mathbb{R}^d)}(\nu_1(x)-\nu_2(x))_{+} (\nu_2(y)-\nu_1(y))_{+}
\end{eqnarray}
Let $k_0$ be defined as 
\begin{eqnarray}
k_0:= \inf \{k \in \mathbb{R}^{+}: \int _{\mathbb{R}^d }\Phi (\|x\|/k)\,d|\nu_1(x)- \nu_2(x)|\leq 1\}.
\end{eqnarray}
Then, using $\nu$ as defined in the above display we get
\begin{eqnarray}
& &\int _{\mathbb{R}^d \times \mathbb{R}^d}\Phi (\|x-y\|/2k_0)\,d\nu(x,y)  \\
&=&\int _{\mathbb{R}^d \times \mathbb{R}^d}\Phi (\|x-y\|/2k_0)  \cdot \dfrac{1}{(\nu_1-\nu_2)_{+}(\mathbb{R}^d)}(\nu_1(x)-\nu_2(x))_{+} (\nu_2(y)-\nu_1(y))_{+}\nonumber \\ 
& \leq &  \int _{\mathbb{R}^d \times \mathbb{R}^d}\Phi (\|x\|/k_0) (\nu_1(x)-\nu_2(x))_{+} \leq 1 \nonumber
\end{eqnarray}

Therefore,
\begin{eqnarray}
& W_{\Phi}(\nu_1,\nu_2) \leq 2 \inf \{k \in \mathbb{R}^{+}: \int _{\mathbb{R}^d }\Phi (\|x\|/k)\,d|\nu_1(x)- \nu_2(x)|\leq 1\}. \nonumber
\end{eqnarray}
As a consequence, we reach the conclusion of the lemma.

\begin{lemma}
\label{lemma: fourier transform}
Let $f(x)=\exp(-x^4)$, and $\tilde{f}(t)=(1/2\pi)\int_{-\infty}^{\infty} \exp(-itx)f(x)\mathrm{d}x$. Then, 
\begin{eqnarray}
     |\tilde{f}(t)| \leq \phi(t)\exp(-7/32 |t|^{4/3}),
\end{eqnarray}
where $\phi(t)$ is an absolutely bounded real-valued function.
\end{lemma}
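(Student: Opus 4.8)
The goal is to bound the Fourier transform $\tilde f(t)$ of $f(x)=\exp(-x^4)$ by something of the form $\phi(t)\exp(-\tfrac{7}{32}|t|^{4/3})$ with $\phi$ bounded. Since $f$ is entire and decays super-fast on the real line, the natural tool is contour shifting: for $t>0$ write $\tilde f(t)=(1/2\pi)\int_{\mathbb{R}}e^{-itx}e^{-x^4}\,dx$ and move the contour of integration from the real axis to the horizontal line $\{x=u-i\sigma : u\in\mathbb{R}\}$ for a suitably chosen $\sigma=\sigma(t)>0$ (for $t<0$ shift in the opposite direction; $t=0$ is trivial). Because $e^{-z^4}$ is entire and $|e^{-z^4}|\to 0$ rapidly as $|\mathrm{Re}\,z|\to\infty$ in any fixed horizontal strip, the arcs at infinity vanish and Cauchy's theorem justifies the shift. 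This yields
\[
\tilde f(t)=\frac{e^{-\sigma t}}{2\pi}\int_{-\infty}^{\infty} e^{-itu}\,e^{-(u-i\sigma)^4}\,du .
\]

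**Key steps.** First I would expand $(u-i\sigma)^4 = u^4 - 4i\sigma u^3 - 6\sigma^2 u^2 + 4i\sigma^3 u + \sigma^4$, so that
\[
\bigl|e^{-(u-i\sigma)^4}\bigr| = \exp\!\bigl(-u^4 + 6\sigma^2 u^2 - \sigma^4\bigr).
\]
The factor $e^{6\sigma^2 u^2}$ is dangerous, but it is dominated by $e^{-u^4}$ for large $u$; completing the estimate, $-u^4 + 6\sigma^2 u^2 - \sigma^4 \le -\tfrac12 u^4 + C\sigma^4$ for a universal constant $C$ (maximizing $-\tfrac12 u^4 + 6\sigma^2 u^2$ over $u$). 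Hence
\[
|\tilde f(t)| \le \frac{e^{-\sigma t}e^{C'\sigma^4}}{2\pi}\int_{-\infty}^{\infty} e^{-u^4/2}\,du = C'' \, e^{-\sigma t + C'\sigma^4}.
\]
Now optimize over $\sigma>0$: the exponent $-\sigma t + C'\sigma^4$ is minimized at $\sigma \asymp t^{1/3}$, giving $-\sigma t + C'\sigma^4 \asymp -c\, t^{4/3}$. The only remaining task is bookkeeping of constants so that the coefficient $c$ in the exponent is at least $7/32$; this is where the precise value of $C'$ (equivalently, the sharpest constant in $-u^4+6\sigma^2u^2-\sigma^4\le -\tfrac12u^4+C'\sigma^4$, or a variant with a different split of the $u^4$ term) must be tracked. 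One has freedom in how much of $u^4$ to "spend" controlling $e^{6\sigma^2u^2}$ versus keeping for integrability, and $7/32$ should come out for an appropriate choice; the residual integral $\int e^{-u^4/2}\,du$ and the polynomial prefactors from the optimization get absorbed into the bounded function $\phi(t)$.

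**Main obstacle.** The conceptual content — contour shift plus saddle-point optimization in $\sigma$ — is standard; the real work is the constant-chasing needed to land exactly on $7/32|t|^{4/3}$ rather than some other $c|t|^{4/3}$. Concretely, one must choose the split of the $-u^4$ term and the value of $\sigma$ in terms of $t$ so that, after minimizing, the surviving exponential rate is $\ge 7/32$; verifying that $7/32$ (and not something smaller) is actually achievable, and that the leftover factors are genuinely bounded in $t$ (they are polynomially bounded after the optimization, and since $\exp(-\varepsilon|t|^{4/3})$ absorbs any polynomial, $\phi$ can be taken bounded), is the fiddly part. I would also double-check the $t\to 0$ regime separately, where $\sigma\to 0$ and the bound degenerates to $|\tilde f(t)|\le C''$, consistent with $\phi$ bounded and $e^{-7/32|t|^{4/3}}\le 1$.
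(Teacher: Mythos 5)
Your proposal is correct and takes essentially the same route as the paper: both shift the integration contour to a horizontal line at imaginary height proportional to $|t|^{1/3}$ (justified by the vanishing of the vertical sides of the rectangle) and read off the decay rate from the real part of the shifted exponent. The paper settles the constant-chasing you flag by completing the square exactly rather than splitting off a fraction of $u^4$: writing $-x^4+6\zeta^2x^2-\zeta^4=-(x^2-3\zeta^2)^2+8\zeta^4$ and choosing $\zeta=\tfrac14\,\mathrm{sign}(t)|t|^{1/3}$ gives the exponent $-\tfrac14|t|^{4/3}+\tfrac1{32}|t|^{4/3}=-\tfrac7{32}|t|^{4/3}$ directly, with the leftover factor $\phi(t)\propto\int_{-\infty}^{\infty}\exp(-(x^2-3\zeta^2)^2)\,\mathrm{d}x$ manifestly bounded.
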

\begin{proof}
Consider a rectangle on the complex plane, with vertices at $R,-R, R+i\zeta, -R+ i\zeta$ respectively.
Following Goursat's Theorem~\cite{Stein-Shakarchi-complex} for integration along rectangular contours on the complex plane, the contour integral along a closed rectangle is $0$.

Therefore, 

\begin{eqnarray}
 & &\int_{-R}^R \exp(-itx)f(x)\mathrm{d}x + \int_{R}^{R+i \zeta} \exp(-itx)f(x) \mathrm{d}x + \int_{-R+ i\zeta}^{-R} \exp(-itx)f(x) \mathrm{d}x \nonumber \\ & & +\int_{R+i\zeta}^{-R+i \zeta} \exp(-itx)f(x)\mathrm{d}x=0.\nonumber
\end{eqnarray}
Now, 
\begin{eqnarray}
|\int_{R}^{R+i \zeta} \exp(-itx)f(x) \mathrm{d}x| =|\int_{0}^{\zeta} \exp(itR -tx)f(R+ix)i \mathrm{d}x| \leq C \exp(-R^4) \to 0, \nonumber
\end{eqnarray}
as $R \to \infty$.
Similarly,
\begin{eqnarray}
|\int_{-R+ i\zeta}^{-R} \exp(-itx)f(x) \mathrm{d}x| \to 0, \nonumber
\end{eqnarray}
as $R \to \infty$.

Therefore,
\begin{eqnarray}
\lim_{R \to \infty}\int_{-R+i\zeta}^{R+i \zeta} \exp(-itx)f(x)\mathrm{d}x = \lim_{R \to \infty}\int_{-R}^R \exp(-itx)f(x)\mathrm{d}x = 2\pi\tilde{f}(t).\nonumber
\end{eqnarray}

Now,
\begin{eqnarray}
\lim_{R \to \infty}\int_{-R}^R \exp(-itx)f(x)\mathrm{d}x = 2\pi\tilde{f}(t) &= & \lim_{R \to \infty}\int_{-R+i\zeta}^{R+i \zeta} \exp(-itx)f(x)\mathrm{d}x \nonumber \\ &=& \lim_{R \to \infty}\int_{-R}^{R} \exp(it(x+i\zeta))f(x +i \zeta)\mathrm{d}x.\nonumber \\
&=& \lim_{R \to \infty}\int_{-R}^{R} \exp(-itx-t\zeta))\exp(-(x+i \zeta)^4)\mathrm{d}x \nonumber .
\end{eqnarray}
Expanding the above expression,
\begin{eqnarray}
&\tilde{f}(t)= (1/2\pi) \lim_{R \to \infty} \int_{-R}^R \exp( -itx-4ix^3\zeta   +4ix \zeta^3 -t\zeta -(x^2 -3 \zeta^2)^2+8\zeta^4)\mathrm{d}x \nonumber.
\end{eqnarray}
Substituting $\zeta=\dfrac{1}{4}\sign(t)|t|^{1/3}$ in the above equationa,
\begin{eqnarray}
|\tilde{f}(t)| \leq (1/2\pi)\exp(-(7/32)|t|^{4/3} )
  \cdot \int_{-\infty}^{\infty} \exp(-(x^2 -(1/3)|t|^{1/2})^2)\mathrm{d}x.
\end{eqnarray}
The proof is complete when we note that $ \phi(t) := (1/2\pi)\int_{-\infty}^{\infty} \exp(-(x^2 -(1/3)|t|^{1/2})^2)\mathrm{d}x$ is an absolutely bounded function.
\end{proof}

\begin{lemma}
\label{lemma: fourier transform_2}
Let $k(t)= c\tilde{f}(t)^2$, where  $\tilde{f}(t)=(1/2\pi)\int_{-\infty}^{\infty}\exp(-itx)\exp(-x^4)\mathrm{d}x$ and $c$ is a constant of proportionality so that $\int_{-\infty}^{\infty} k(t) \mathrm{d}t=1$. Then,
\begin{eqnarray}
|\int_{-\infty}^{\infty}\exp(itx)k(t)\mathrm{d}t | \lesssim  \exp(-(x/2)^4)
\end{eqnarray}
\end{lemma}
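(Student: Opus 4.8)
The plan is to recognize that, up to a harmless normalizing constant, the quantity $\int_{-\infty}^{\infty}\exp(itx)\,k(t)\,\mathrm{d}t$ is precisely the self-convolution of $f(x)=\exp(-x^4)$ evaluated at $x$, and then to estimate that convolution by an elementary calculation. The only genuinely analytic step is to legitimize the Fourier manipulations, and the integrability estimate from Lemma~\ref{lemma: fourier transform} is exactly what is needed for that.

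First I would collect the integrability facts. We have $f(x)=\exp(-x^4)\in L^1(\mathbb{R})\cap L^2(\mathbb{R})$, and by Lemma~\ref{lemma: fourier transform} the function $\tilde f$ obeys $|\tilde f(t)|\le \phi(t)\exp(-(7/32)|t|^{4/3})$ with $\phi$ bounded, so $\tilde f\in L^1(\mathbb{R})\cap L^2(\mathbb{R})$ and $k(t)=c\,\tilde f(t)^2\in L^1(\mathbb{R})$; in particular $c=\big(\int_{-\infty}^{\infty}\tilde f(t)^2\,\mathrm{d}t\big)^{-1}$ is a finite positive number. With the normalization $\tilde g(t)=\tfrac{1}{2\pi}\int \exp(-itx)g(x)\,\mathrm{d}x$ used throughout, Fourier inversion $g(x)=\int \exp(itx)\tilde g(t)\,\mathrm{d}t$ holds pointwise for $g=f$ and for $g=f\ast f$ (both being in $L^1$ with integrable transform), while the convolution theorem gives $\widetilde{f\ast f}=2\pi\,\tilde f^{\,2}$. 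Chaining these identities yields
\[
\int_{-\infty}^{\infty}\exp(itx)\,k(t)\,\mathrm{d}t = c\int_{-\infty}^{\infty}\exp(itx)\,\tilde f(t)^2\,\mathrm{d}t = \frac{c}{2\pi}\,(f\ast f)(x) = \frac{c}{2\pi}\int_{-\infty}^{\infty}\exp\!\big(-y^4-(x-y)^4\big)\,\mathrm{d}y.
\]

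Next I would bound the last integral directly. Substituting $y=\tfrac{x}{2}+u$ and using the identity $(a+u)^4+(a-u)^4=2a^4+12a^2u^2+2u^4$ with $a=\tfrac{x}{2}$, we get $y^4+(x-y)^4=2(x/2)^4+12(x/2)^2u^2+2u^4\ge 2(x/2)^4+2u^4$, hence
\[
\int_{-\infty}^{\infty}\exp\!\big(-y^4-(x-y)^4\big)\,\mathrm{d}y \le \exp\!\big(-2(x/2)^4\big)\int_{-\infty}^{\infty}\exp(-2u^4)\,\mathrm{d}u = C_0\exp\!\big(-2(x/2)^4\big),
\]
where $C_0=\int_{-\infty}^{\infty}\exp(-2u^4)\,\mathrm{d}u<\infty$. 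Since $2(x/2)^4\ge (x/2)^4$, this is at most $C_0\exp(-(x/2)^4)$, so $\big|\int_{-\infty}^{\infty}\exp(itx)\,k(t)\,\mathrm{d}t\big|\le \tfrac{cC_0}{2\pi}\exp(-(x/2)^4)$, which is the assertion.

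I do not anticipate a real obstacle here: the content is the convolution-theorem reduction plus the quartic identity. The one point to be careful about is rigor in the Fourier step — pointwise inversion and $\widetilde{f\ast f}=2\pi\tilde f^2$ require $f\in L^1\cap L^2$ and $\tilde f\in L^1$ — which is why I would front-load the integrability of $f$ and, via Lemma~\ref{lemma: fourier transform}, of $\tilde f$. Everything after that is the exact computation $(a+u)^4+(a-u)^4=2a^4+12a^2u^2+2u^4$ together with discarding the nonnegative middle term.
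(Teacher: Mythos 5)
Your proposal is correct and follows essentially the same route as the paper: reduce $\int e^{itx}k(t)\,\mathrm{d}t$ to the self-convolution $(f\ast f)(x)$ of $f(x)=e^{-x^4}$ via the convolution theorem and Fourier inversion, then bound that integral elementarily. The only difference is in the last step — the paper splits the integration domain at $y=x/2$ and bounds one exponential factor on each piece, whereas you substitute $y=x/2+u$ and use the identity $(a+u)^4+(a-u)^4=2a^4+12a^2u^2+2u^4$, which in fact yields the slightly sharper decay $\exp(-2(x/2)^4)$; you are also more careful than the paper about tracking the constant $c/(2\pi)$ and the integrability hypotheses justifying the Fourier manipulations.
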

\begin{proof}
Define $f(x)=\exp(-x^4)$. Then, by a version of the Fourier inversion theorem,
\begin{eqnarray}
\int_{-\infty}^{\infty}\exp(itx)k(t)\mathrm{d}t =f \ast f(x) \nonumber,
\end{eqnarray}
where $\ast$ is the convolution operator. Since convolution of even functions is even, it is enough to show the result for $x>0$. Then,
\begin{eqnarray}
f \ast f(x) & =\int_{-\infty}^{\infty} \exp(-y^4) \exp(-(y-x)^4) \mathrm{d}y \nonumber\\
& = \int_{x/2}^{\infty} \exp(-y^4) \exp(-(y-x)^4)\mathrm{d}y  + \int_{-\infty}^{x/2} \exp(-y^4) \exp(-(y-x)^4)\mathrm{d}y \nonumber \\
& \leq  \exp(-(x/2)^4)\int_{x/2}^{\infty}  \exp(-(y-x)^4)\mathrm{d}y + \exp(-(x/2)^4)\int_{-\infty}^{x/2} \exp(-y^4) \mathrm{d}y\nonumber \\
& \leq  2\exp(-(x/2)^4)\int_{-\infty}^{\infty}  \exp(-y^4)\mathrm{d}y.
\end{eqnarray}
The result holds with $C=2\int_{-\infty}^{\infty}  \exp(-y^4)\mathrm{d}y$ since $\int_{-\infty}^{\infty}  \exp(-y^4)\mathrm{d}y < \infty$.
\end{proof}
%%%%%%%%%%%%%%%%%%%%%%%%%%%%%%%%%%%%%%%%%%%%%%%%%%%%%%%%%%%%%%%%%%%%%%%%%%%%%%%%%%%%%%%%%%%%%%%%%%%%%%%%%%%
\subsection{Proof of Lemma~\ref{lemma: orlicz_bound}}
\label{subsec:proof:lemma: orlicz_bound}
Suppose $\vec{q}=(q_{ij})_{1 \leq i \leq k_0,1\ \leq j \leq k} \in [0,1]^{k_0 \times k}$ is a coupling between $\vec{p_0}=(p^0_{1}, \ldots, p^0_{k_0})$ and
$\vec{p}=(p_{1}, \ldots, p_{k})$, with $\mathcal{Q}(\vec{p},\vec{p'})$ representing the space of all such couplings of $\vec{p}$ and $\vec{p'}$. Then, for fixed $k$ we have
\begin{align*}
\sum q_{ij}\Phi (\|\theta_i^0-\theta_j \|/k) & \geq \sum q_{ij} \mathbbm{1}_{ \{ \|\theta_i^0-\theta_j \|\geq \eta \} }\Phi (\eta/k) \\
&\geq \sum p_{j} \mathbbm{1}_{ \{ \|\theta_i^0-\theta_j \| \geq \eta \text{ for all } i \} }\Phi (\eta/k).
\end{align*}
Let $K= \inf\{k \geq 0:  \sum p_{j} \mathbbm{1}_{ \{ \|\theta_i^0-\theta_j \| \geq \eta \text{ for all } i \}} \Phi (\eta/k) \leq 1 \}$. Then,
\begin{eqnarray}
\label{eq: K and eta}
K \geq \eta \left( \Phi^{-1}\left(\dfrac{1}{\sum p_{j} \mathbbm{1}_{ \{ \|\theta_i^0-\theta_j \| \geq \eta \text{ for all } i \} }}\right)\right)^{-1},
\end{eqnarray}
where $\Phi^{-1}$ is the inverse function of the function $\Phi$. Note that, this function exists and is concave as $\Phi$ is monotonic increasing and convex.
Moreover,  by Lemma~\ref{lemma: orlicz properties}(i), we would have that $W_{\Phi}(G,G_0) \geq K$, where,

\begin{eqnarray}
\label{eq: W and eta}
& W_{\Phi}(G,G_0):= \inf_{q \in \mathcal{Q}(\vec{p},\vec{p'})} \{ \inf\{k \geq 0: \sum q_{ij}\Phi (\|\theta_i^0-\theta_j \|/k) \leq 1 \} \}  \nonumber
\end{eqnarray}
Combining the results from equations~\eqref{eq: K and eta} and~\eqref{eq: W and eta} we obtain the conclusion of the lemma.
%%%%%%%%%%%%%%%%%%%%%%%%%%%%%%%%%%%%%%%%%%%%%%%%%%%%%%%%%%%%%%%%%%%%%%%%%%%%%%%%%%%%%%%%%%%%%%%%%%%%%%%%%%%%%%%%%%%
\subsection{Prior mass on Wasserstein ball}
\label{subsection:proof_lemma_prior_mass_MFM}

\begin{lemma}
\label{lemma:Prior_mass_DPMM}
Let $G \sim  DP(\gamma,H_n)$. Fix $r\geq 1$. Assume $G_{0} \in \mathcal{M}(\Theta)$, where $\Theta= [-\bar{\theta},\bar{\theta}]^{d}$. If $H_n$ admits condition (P.1), then the following holds

\begin{eqnarray}
& & \Pi \left( W_r^r(G,G_{0}) \leq (2^r +1)\epsilon^r \right) \geq \frac{\Gamma(\gamma) (c_0\gamma\pi^{d/2})^{D}}{(2\Gamma(d/2+1))^{D}(2D)^{D-1}} 
		\left(\frac{\epsilon}
		{2\bar{\theta}}\right)^{r(D-1) +dD} \nonumber
\end{eqnarray}
for all $\epsilon$ sufficiently small so that $D(\epsilon, \Theta, \|.\|)>\gamma$. 

Here, $D = D(\epsilon, \Theta, \|.\|)$ stands for the maximal $\epsilon$-packing number for $\Theta$ under $\|.\|$ norm, and $\Gamma(\cdot)$ is the gamma function.

\end{lemma}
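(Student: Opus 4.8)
The plan is to reduce the statement to a single Dirichlet tail bound via a packing/partition argument followed by an explicit coupling. First I would set up the geometric scaffold: let $\{\theta_1,\dots,\theta_D\}$ be a maximal $\epsilon$-packing of $\Theta$, so $D=D(\epsilon,\Theta,\|\cdot\|)$ and, by maximality, the balls $B(\theta_i,\epsilon)$ cover $\Theta$ while the balls $B(\theta_i,\epsilon/2)$ are pairwise disjoint. Let $U_i$ be (a measurable completion of) the Voronoi cell of $\theta_i$ inside $\Theta$, so that $\Theta=\bigsqcup_{i=1}^D U_i$, $\theta_i\in U_i\subseteq B(\theta_i,\epsilon)$ — whence $\diam(U_i)\le 2\epsilon$ — and $U_i\supseteq B(\theta_i,\epsilon/2)\cap\Theta$. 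By condition (P.1), $H_n(U_i)\ge \tfrac{c_0}{\mu(\Theta)}\mu\!\left(B(\theta_i,\epsilon/2)\cap\Theta\right)\ge\kappa$ for an explicit constant $\kappa$ of order $c_0(\epsilon/\bar\theta)^d$ (the volume of a radius-$\epsilon/2$ ball, clipped to the cube, divided by $\mu(\Theta)=(2\bar\theta)^d$); and, using the approximate uniformity of $H_n$, also $H_n(U_i)\lesssim \mu(U_i)/\mu(\Theta)\lesssim 1/D$.

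Next, a deterministic coupling bound: for \emph{any} probability measure $G$ on $\Theta$, build the coupling of $G$ and $G_0$ that, within each cell $U_i$, transports the shared mass $\min(G(U_i),G_0(U_i))$ arbitrarily (cost $\le(2\epsilon)^r$ per unit, since both ends lie in $U_i$) and transports the residual mass — of total size $\tfrac12\sum_i|G(U_i)-G_0(U_i)|$ — anywhere in $\Theta$ (cost $\le\text{Diam}(\Theta)^r\le(2\bar\theta)^r$ per unit). This yields
\[
W_r^r(G,G_0)\ \le\ 2^r\epsilon^r\ +\ (2\bar\theta)^r\cdot\tfrac12\sum_{i=1}^D|G(U_i)-G_0(U_i)|,
\]
so that $\bigl\{\sum_{i=1}^D|G(U_i)-G_0(U_i)|\le \tau\bigr\}\subseteq\{W_r^r(G,G_0)\le(2^r+1)\epsilon^r\}$ with $\tau:=2(\epsilon/2\bar\theta)^r$. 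It therefore suffices to lower bound $\Pi\bigl(\sum_i|G(U_i)-G_0(U_i)|\le\tau\bigr)$.

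For that, I would invoke the defining property of the Dirichlet process: $(G(U_1),\dots,G(U_D))\sim\mathrm{Dir}(\gamma H_n(U_1),\dots,\gamma H_n(U_D))$, a law on the simplex $\Delta_{D-1}$ with density $\propto\prod_i p_i^{\gamma H_n(U_i)-1}$ and normalizing constant $\Gamma(\gamma)/\prod_i\Gamma(\gamma H_n(U_i))$ (using $\sum_iH_n(U_i)=1$). Writing $q_i=G_0(U_i)$ and relabeling so that $q_D=\max_iq_i\ge 1/D$, I would restrict to the box $p_i\in[q_i,\,q_i+\tau/(2(D-1))]$ for $i<D$ (with $p_D=1-\sum_{i<D}p_i$): on this set all $p_i\in(0,1]$, one checks $\sum_i|p_i-q_i|\le\tau$, and here the hypothesis $D>\gamma$ is exactly what is needed, since with $H_n(U_i)\lesssim1/D$ it forces every exponent $\gamma H_n(U_i)-1$ to be negative, so $\prod_i p_i^{\gamma H_n(U_i)-1}\ge1$ a.e.\ on the box, which has $(D-1)$-volume $(\tau/(2(D-1)))^{D-1}$. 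Hence $\Pi\bigl(\sum_i|G(U_i)-G_0(U_i)|\le\tau\bigr)\ge\frac{\Gamma(\gamma)}{\prod_i\Gamma(\gamma H_n(U_i))}(\tau/2D)^{D-1}$. Finally, since $\gamma H_n(U_i)\in(0,1)$ gives $\Gamma(\gamma H_n(U_i))^{-1}\ge\gamma H_n(U_i)\ge\gamma\kappa$, collecting the $D$ factors $\gamma\kappa$ and the term $(\tau/2D)^{D-1}=(\epsilon/2\bar\theta)^{r(D-1)}/D^{D-1}\ge(\epsilon/2\bar\theta)^{r(D-1)}/(2D)^{D-1}$ reproduces the claimed bound $\frac{\Gamma(\gamma)(c_0\gamma\pi^{d/2})^{D}}{(2\Gamma(d/2+1))^{D}(2D)^{D-1}}(\epsilon/2\bar\theta)^{r(D-1)+dD}$ after routine bookkeeping of the dimensional constants in $\kappa$.

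The packing and coupling steps are not the obstacle; the delicate part is this last step — assembling the Dirichlet normalizing constant, the simplex-box volume, and the definition of $\tau$ into \emph{exactly} the stated constant, and in particular making rigorous the point that $D(\epsilon,\Theta,\|\cdot\|)>\gamma$ is precisely the condition under which all Dirichlet exponents are negative, so that the density may be bounded below by its normalizing constant on a full coordinate box around $(G_0(U_i))_i$, including the degenerate coordinates where $G_0(U_i)=0$.
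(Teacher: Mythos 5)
Your overall route is the right one, but note that the paper does not carry out this argument itself: its proof of Lemma~\ref{lemma:Prior_mass_DPMM} is two lines, citing Lemma~5 of~\cite{Nguyen-13} for the bound $\Pi\left(W_r^r(G,G_0)\le(2^r+1)\epsilon^r\right)\ge\frac{\Gamma(\gamma)\gamma^{D}}{(2D)^{D-1}}\left(\epsilon/\mathrm{Diam}(\Theta)\right)^{r(D-1)}\prod_{i}H_n(S_i)$ and then inserting $H_n(S_i)\ge\frac{c_0}{\mu(\Theta)}\mu(S_i)$ together with the volume of an $\epsilon/2$-ball. What you wrote is a reconstruction of that cited lemma, and your scaffold (maximal packing, Voronoi completion into cells $U_i$ with $\mathrm{diam}(U_i)\le 2\epsilon$ and $U_i\supseteq B(\theta_i,\epsilon/2)\cap\Theta$, the two-tier coupling, and the Dirichlet finite-dimensional law) is exactly the standard one underlying that reference.

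The genuine gap is the step you yourself call ``the delicate part.'' Your lower bound on the Dirichlet density over the coordinate box requires every exponent $\gamma H_n(U_i)-1$ to be nonpositive, both for $\prod_i p_i^{\gamma H_n(U_i)-1}\ge 1$ (if some exponent is positive, that factor must instead be bounded below by $q_i^{\gamma H_n(U_i)-1}$, which is $0$ on cells with $q_i=0$) and for $\Gamma(\gamma H_n(U_i))^{-1}\ge\gamma H_n(U_i)$, which fails once the argument exceeds $1$. You justify $\gamma H_n(U_i)\le 1$ via ``$H_n(U_i)\lesssim 1/D$ together with $D>\gamma$,'' but condition (P.1) only \emph{lower}-bounds the density of $H_n$; nothing prevents $H_n$ from putting almost all of its mass on a single cell, so no upper bound on $H_n(U_i)$ is available. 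Even granting a two-sided density bound, $H_n(U_i)\le C/D$ with $C>1$ combined with $D>\gamma$ yields only $\gamma H_n(U_i)<C$, not $\le 1$. So the claim that $D>\gamma$ is ``exactly'' the condition making all exponents negative holds only in the idealized equal-mass, exactly-uniform case; in your setting it is unproved. To close this you must either strengthen (P.1) to a two-sided bound and take $\epsilon$ small enough that $\gamma\,\sup_\theta g(\theta)\,\mu(B(\theta_i,\epsilon))\le 1$, or invoke a Dirichlet small-ball bound that tolerates parameters exceeding one (the Ghosal--Ghosh--Ramamoorthi-style bounds, which is effectively what Lemma~5 of~\cite{Nguyen-13} packages). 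A second, smaller issue you share with the paper: the residual mass is transported at cost $\mathrm{Diam}(\Theta)^r=(2\bar\theta\sqrt{d})^r$ per unit, not $(2\bar\theta)^r$, so your choice $\tau=2(\epsilon/2\bar\theta)^r$ does not give the inclusion into $\{W_r^r\le(2^r+1)\epsilon^r\}$ when $d>1$; a factor of $d^{r/2}$ is silently dropped.
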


\begin{proof}

From Lemma 5 in~\cite{Nguyen-13},
\begin{eqnarray}
  \Pi \left( W_r^r(G,G_{0}) \leq (2^r +1)\epsilon^r \right) 
\geq  \frac{\Gamma(\gamma) \gamma^{D}}{(2D)^{D-1}} 
		\left(\frac{\epsilon}
		{\text{Diam}(\Theta)}\right)^{r(D-1)} \sup_S \prod_{i=1}^{D} H_n(S_i) \nonumber , 
\end{eqnarray}
where, $S := (S_1,...,S_{D})$ denotes the ${D}$ disjoint  $\epsilon/2$-balls that form a maximal $\epsilon$-packing of $\Theta$. The supremum is taken over all such packings.

Now,  $H_n(A) \geq \left(\dfrac{c_0}{\mu(\Theta)}\right)\mu(A)$. Moreover, $\prod_{i=1}^{D} \mu(S_i) \geq \left(\dfrac{(\sqrt{\pi}\epsilon)^d}{2\Gamma(d/2+1)}\right)^{D}$. Using this, we arrive at the result.

\end{proof}

%%%%%%%%%%%%%%%%%%%%%%
%%%%%%%%%%%%%%%%%%%%%%%%%%%%%%%%%%%%%%%%%%%%%%%
\subsection{Metric entropy with Hellinger distance}

\begin{lemma}
\label{lemma:Hellinger_metric_entropy}
Let $G_0$ be a discrete mixing measure with all its atoms in $\Theta=[-\tilde{\theta},\tilde{\theta}]^d \subset \mathbb{R}^d$. Let $\mathscr{P}_{\overline{\Gcal}(\Theta)}:=\{p_G: G \in \overline{\Gcal}(\Theta)\}$. Then, if the kernel $f$ is multivariate Gaussian with covariance matrix $\Sigma$,
\begin{eqnarray}
\label{eq:hellinger_entropy_gaussian}
\log D(\epsilon/2, \{p_G \in \mathscr{P}_{\overline{\Gcal}(\Theta)}: \epsilon< h(p_G,p_{G_0}) \leq 2\epsilon \}, h) \leq c_1 \left(\dfrac{\tilde{\theta}}{\sqrt{\lambda_{min}}\epsilon}\right)^d \log\left(e + \dfrac{32e \tilde{\theta}^2}{\lambda_{min}\epsilon^2}\right)
\end{eqnarray}
for some universal constant $c_1$.

\end{lemma}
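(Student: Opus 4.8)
The plan is to bound the packing number by constructing an explicit Hellinger net for the \emph{whole} class $\mathscr{P}_{\overline{\Gcal}(\Theta)} := \{p_G : G \in \overline{\Gcal}(\Theta)\}$ --- restricting to the shell $\{\epsilon < h(p_G,p_{G_0}) \le 2\epsilon\}$ can only decrease the packing number, so I would simply discard that restriction --- and then count the net, recalling that an $(\epsilon/4)$-cover yields an upper bound on the $(\epsilon/2)$-packing number. The net is built in two stages: first freeze the atom locations onto a fixed finite grid of $\Theta$, then discretize the mixing weights over the resulting simplex.

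\emph{Stage 1: locations.} Cover $\Theta=[-\tilde{\theta},\tilde{\theta}]^d$ by cubes $C_1,\dots,C_N$ of diameter at most $\sqrt{8\lambda_{\min}}\,\epsilon/8$, with centers $\theta_1,\dots,\theta_N$; a volumetric count gives $N\le (c_d\,\tilde{\theta}/(\sqrt{\lambda_{\min}}\,\epsilon))^d$ for a dimension-dependent constant $c_d$. For an arbitrary, possibly countably supported, $G=\sum_i p_i\delta_{\theta_i}\in\overline{\Gcal}(\Theta)$, set $\bar G := \sum_{j=1}^N w_j\delta_{\theta_j}$ with $w_j=G(C_j)$ (using half-open cells so the $C_j$ partition $\Theta$). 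Moving each atom of $G$ to the center of its cell is a coupling of $G$ and $\bar G$ with transport displacement bounded by the cell diameter, so $W_2(G,\bar G)\le \sqrt{8\lambda_{\min}}\,\epsilon/8$, and by Example~1 of~\cite{Nguyen-13}, $h^2(p_G,p_{\bar G})\le W_2^2(G,\bar G)/(8\lambda_{\min})\le (\epsilon/8)^2$. Note $\bar G$ has finite support on the fixed grid, so countably-supported mixing measures create no difficulty past this step.

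\emph{Stage 2: weights.} Each $\bar G$ from Stage~1 corresponds to a point $\vec w\in\Delta^{N-1}$, the probability simplex in $\mathbb{R}^N$. For two such measures on the common grid, $\|p_{\bar G}-p_{\bar G'}\|_1\le \sum_j |w_j-w'_j|\,\|f(\cdot|\theta_j)\|_1=\|\vec w-\vec w'\|_1$, and using $h^2\le \tfrac12\|p-q\|_1$ this is $\le \epsilon/8$ whenever $\|\vec w-\vec w'\|_1\le \epsilon^2/32$. A standard volumetric bound gives an $\ell_1$-net of $\Delta^{N-1}$ at that scale of cardinality at most $(C\,\tilde{\theta}^2/(\lambda_{\min}\epsilon^2))^N$ after absorbing constants. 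The mixtures $p_{\bar G}$ with grid support and weights in this net form, by the triangle inequality, an $(\epsilon/4)$-net of $\mathscr{P}_{\overline{\Gcal}(\Theta)}$, hence
\begin{align*}
\log D(\epsilon/2,\mathscr{P}_{\overline{\Gcal}(\Theta)},h)\;&\le\; N\log\!\left(\frac{C\,\tilde{\theta}^2}{\lambda_{\min}\epsilon^2}\right)\;\le\; c_1\left(\frac{\tilde{\theta}}{\sqrt{\lambda_{\min}}\,\epsilon}\right)^d\log\!\left(e+\frac{32e\,\tilde{\theta}^2}{\lambda_{\min}\epsilon^2}\right).
\end{align*}

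\emph{Main obstacle.} Conceptually nothing here is deep: the only non-elementary inputs are the Hellinger--$W_2$ comparison for Gaussian location mixtures (Example~1 of~\cite{Nguyen-13}, which I would cite) and the $\ell_1$-covering number of the simplex (a standard volumetric estimate). The real work is bookkeeping the constants --- choosing the location grid at scale $\asymp\sqrt{\lambda_{\min}}\,\epsilon$ and the weight net at scale $\asymp\epsilon^2$, then tracking the dimension-dependent factor $c_d^d$ --- so that the exponent comes out exactly $(\tilde{\theta}/(\sqrt{\lambda_{\min}}\epsilon))^d$ and the logarithmic factor exactly $\log(e+32e\,\tilde{\theta}^2/(\lambda_{\min}\epsilon^2))$, the additive $e$ being precisely what keeps the logarithm bounded below by $1$ when $\epsilon$ is not small relative to $\tilde{\theta}/\sqrt{\lambda_{\min}}$.
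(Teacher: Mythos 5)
Your construction is essentially the paper's proof with the black box opened: the paper also passes from Hellinger to $W_2$ via $h^2(p_G,p_{G'})\le W_2^2(G,G')/(8\lambda_{\min})$ (Lemma 2 and Example 1 of \cite{Nguyen-13}) and then cites Lemma 4(b) of \cite{Nguyen-13}, which is exactly your location-grid-plus-weight-simplex net for $\overline{\Gcal}(\Theta)$. The one step that does not survive scrutiny as written is the weight discretization: by going through $h^2\le\frac12\|p_{\bar G}-p_{\bar G'}\|_1\le\frac12\|\vec w-\vec w'\|_1$ you are forced to take the simplex net at the \emph{absolute} $\ell_1$ scale $\epsilon^2/32$, which contributes $N\log(C/\epsilon^2)$ to the entropy, and this is not $\lesssim N\log\bigl(e+32e\tilde\theta^2/(\lambda_{\min}\epsilon^2)\bigr)$ unless $\tilde\theta\gtrsim\sqrt{\lambda_{\min}}$ (take $\tilde\theta=\sqrt{\lambda_{\min}}\,\epsilon$: your count gives $\asymp\log(1/\epsilon)$ while the target is $O(1)$), so ``absorbing constants'' hides a genuine loss. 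The repair is to run the weight step through $W_2$ as well: on the fixed grid, reallocating $\ell_1$ weight mass $\delta$ costs at most $\delta\,\mathrm{Diam}(\Theta)^2$ in $W_2^2$, so a simplex net at the \emph{relative} scale $\asymp\lambda_{\min}\epsilon^2/(d\,\tilde\theta^2)$ already yields $h\le\epsilon/8$, and its cardinality is precisely what produces the $\log\bigl(e+32e\tilde\theta^2/(\lambda_{\min}\epsilon^2)\bigr)$ factor --- this is how the cited Lemma 4(b) obtains it. With that one change your argument is complete and matches the paper's.
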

\begin{proof}

Let $N(\epsilon,\mathscr{P},d)$ denote the $\epsilon$-covering number of the space $\mathscr{P}$ relative to the metric $d$. It is related to the packing number by the following identity:
\begin{eqnarray}
     \label{eq:covering_packing}
     N(\epsilon, \mathscr{P}, h) \leq D(\epsilon, \mathscr{P}, d) \leq N(\epsilon/2, \mathscr{P}, h).
\end{eqnarray}
 Using the result in Example 1 of~\cite{Nguyen-13}, when $f_{\Sigma}(\cdot|\theta)\sim \mathcal{N}_d(\theta,\Sigma)$,
    \begin{eqnarray}
    \label{eq:hellinger_gaussian_bound}
    h^2(f_{\Sigma}(\cdot|\theta_i), f_{\Sigma}(\cdot|\theta'_j)) = 1- \exp\left(-\dfrac{1}{8}\|\theta_i-\theta'_j\|_{\Sigma^{-1}}^2\right) \leq \dfrac{\|\theta_i-\theta'_j\|^2}{8\lambda_{min}},
    \end{eqnarray} 
    where $\|z\|_{\Sigma^{-1}}:=\sqrt{z'\Sigma^{-1}z}$.

Let $G_0=\sum_{i=1}^{k_0} p_{i}^0\delta_{\theta_i^0} $ and $G=\sum_{j=1}^{k'} p'_{j}\delta_{\theta'_j}$ be mixing measures in $\overline{\Gcal}(\Theta)$, with  $k_0,k' \in [1,\infty]$. Let $\vec{q}=(q_{ij})_{1 \leq i \leq k_0 ,1\ \leq j \leq k'} \in [0,1]^{k_0 \times k'}$ denote a coupling of $\vec{p^0}$ and $\vec{p'}$.
    
Using Lemma 2 of~\cite{Nguyen-13} with $\phi(x)= \dfrac{1}{2}(\sqrt{x}-1)^2$, gives us:
\begin{eqnarray}
h^2(p_G,p_{G_0}) \leq \inf_{\vec{q}\in Q(\vec{p_0},\vec{p'})} \sum_{i,j} q_{ij}\dfrac{\|\theta_i-\theta'_j\|^2}{8\lambda_{min}} = \dfrac{W_2(G,G_0)^2}{8\lambda_{min}},
\end{eqnarray}
where $Q(\vec{p_0},\vec{p'})$ is the set of all couplings of $\vec{p_0}$ and $\vec{p'}$.
Therefore, it immediately follows that:
\begin{eqnarray}
& & \log D(\epsilon/2, \{p_G \in \mathscr{P}_{\overline{\Gcal}(\Theta)}: \epsilon< h(p_G,p_{G_0}) \leq 2\epsilon \}, h) \nonumber \\ &\leq & \log D(\sqrt{2 \lambda_{min}} \epsilon, \{G: G \in \overline{\Gcal}(\Theta)\}, W_2)\leq  N\left (\sqrt{\dfrac{ \lambda_{min}}{8}} \epsilon, \Theta, \|\cdot\|\right) \log\left(e + \dfrac{32e \tilde{\theta}^2}{\lambda_{min}\epsilon^2}\right) \nonumber.
\end{eqnarray}
The last inequality follows by applying Eq.~\eqref{eq:covering_packing} followed by Lemma 4 part (b) of~\cite{Nguyen-13}. The result then follows immediately.

\end{proof}

%%%%%%%%%%%%%%%%%%%%%%%%%%%%%%%%%%%%%%%%%%%%%%%%%%%%%%%%%%%%%%%%%%%%
%%%%%%%%%%%%%%%%%%%%%%%%%%%%%%%%%%%%%%%%%%%%%%%%%%%%%%%%%%%%%%%%%%%%%%
\subsection{Computation of $M$ corresponding to KL ball}
\begin{lemma}
\label{lemma:M_compute}
Let $G$ be a discrete mixing measure with all its atoms in $\left[-\tilde{\theta},\tilde{\theta}\right]^d$ for some $\tilde{\theta} >0$. Furthermore, assume the atoms of $G_0$ lie in $\left[-\bar{\theta},\bar{\theta} \right]^d$ where $\bar{\theta} > 0$ is given. Then, the following holds if the kernel $f$ is multivariate Gaussian,
\begin{eqnarray}
\bigintsss \dfrac{(p_{G_0}(x))^2}{p_G(x)}\mu(\mathrm{d}x) \leq \exp(d\lambda_{min}^{-1} (  5\bar{\theta}^2 + 4\tilde{\theta}^{2})).
\end{eqnarray}
Here $\mu$ is the Lebesgue measure on $\mathbb{R}^d$.
\end{lemma}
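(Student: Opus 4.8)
The plan is to bound the integrand pointwise by a quantity we can integrate in closed form, and then control the resulting Gaussian integral uniformly over the admissible atom locations. Writing $p_{G_0}(x) = \int f(x\mid\theta)\,\mathrm{d}G_0(\theta)$ and applying Jensen's inequality with the convex map $t\mapsto t^2$ gives $p_{G_0}(x)^2 \le \int f(x\mid\theta)^2\,\mathrm{d}G_0(\theta)$. Likewise, since $p_G(x) = \int f(x\mid\theta')\,\mathrm{d}G(\theta')$ is an average and $t\mapsto 1/t$ is convex on $(0,\infty)$, Jensen gives $1/p_G(x) \le \int f(x\mid\theta')^{-1}\,\mathrm{d}G(\theta')$. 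Multiplying these, integrating in $x$, and applying Tonelli's theorem (the integrand is nonnegative) yields
\begin{align*}
\int \frac{(p_{G_0}(x))^2}{p_G(x)}\,\mu(\mathrm{d}x) \;\le\; \int\!\!\int \left( \int \frac{f(x\mid\theta)^2}{f(x\mid\theta')}\,\mu(\mathrm{d}x) \right) \mathrm{d}G_0(\theta)\,\mathrm{d}G(\theta').
\end{align*}

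The second ingredient is the explicit Gaussian moment computation: for $a,b\in\mathbb{R}^d$ with $f(\cdot\mid\theta)\sim\mathcal{N}(\theta,\Sigma)$, one completes the square in the exponent of $f(x\mid a)^2/f(x\mid b)$. That exponent equals $-\tfrac12 x^\top\Sigma^{-1}x + (2a-b)^\top\Sigma^{-1}x - a^\top\Sigma^{-1}a + \tfrac12 b^\top\Sigma^{-1}b$; rewriting the quadratic part as $-\tfrac12\big(x-(2a-b)\big)^\top\Sigma^{-1}\big(x-(2a-b)\big)$ plus a constant and carrying out the remaining Gaussian integral — whose normalizing factor exactly cancels the prefactor of $f(x\mid a)^2/f(x\mid b)$ — leaves
\begin{align*}
\int \frac{f(x\mid a)^2}{f(x\mid b)}\,\mu(\mathrm{d}x) \;=\; \exp\!\big( \|a-b\|_{\Sigma^{-1}}^2 \big), \qquad \|z\|_{\Sigma^{-1}}^2 := z^\top\Sigma^{-1}z,
\end{align*}
which correctly returns $1$ when $a=b$.

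It remains to make this uniform. Every atom $\theta$ of $G_0$ lies in $[-\bar{\theta},\bar{\theta}]^d$ and every atom $\theta'$ of $G$ lies in $[-\tilde{\theta},\tilde{\theta}]^d$, so $\|\theta-\theta'\|^2 = \sum_{k=1}^d(\theta_k-\theta'_k)^2 \le d(\bar{\theta}+\tilde{\theta})^2$, and hence $\|\theta-\theta'\|_{\Sigma^{-1}}^2 \le \lambda_{min}^{-1}\|\theta-\theta'\|^2 \le \lambda_{min}^{-1} d(\bar{\theta}+\tilde{\theta})^2 \le \lambda_{min}^{-1} d(5\bar{\theta}^2+4\tilde{\theta}^2)$, using $2\bar{\theta}\tilde{\theta}\le\bar{\theta}^2+\tilde{\theta}^2$ in the last step. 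Substituting the uniform bound $\exp\!\big(\lambda_{min}^{-1}d(5\bar{\theta}^2+4\tilde{\theta}^2)\big)$ for the inner integral in the displayed double integral and using $G_0(\Theta)=G(\Theta)=1$ gives the claim.

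I do not expect a substantive obstacle here: the two Jensen steps must be applied in the correct direction, and the only computation requiring care is the square-completion for the Gaussian integral — in particular getting the factor of two right on the $(2a-b)$ term so that the normalizing constants cancel and the exponent collapses to $\|a-b\|_{\Sigma^{-1}}^2$. (The compactness of the atom sets also ensures the pointwise bounds are integrable, but this is not needed once the uniform finite bound on the inner integral is in hand.)
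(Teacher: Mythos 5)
Your proof is correct, and at one point it is more careful than the paper's. The overall structure matches the paper's — reduce the mixture-level quantity to a kernel-level integral $\int f(x\mid a)^2/f(x\mid b)\,\mathrm{d}x$, compute it explicitly, then take a uniform bound over the compact atom domains — but the reduction step is a genuinely different and more elementary argument. The paper invokes Lemma~2 of~\cite{Nguyen-13} with $\phi(t)=1/t$, which gives a bound via an infimum over couplings, $\sum_{i,j}q_{ij}\int f(x\mid\theta_i^0)^2/f(x\mid\theta_j')\,\mathrm{d}x$; you instead apply two separate Jensen steps (for $t\mapsto t^2$ on $p_{G_0}$ and for $t\mapsto 1/t$ on $p_G$) and Tonelli, arriving at the product-measure bound $\iint\left(\int f(x\mid\theta)^2/f(x\mid\theta')\,\mathrm{d}x\right)\mathrm{d}G_0(\theta)\,\mathrm{d}G(\theta')$. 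Your bound corresponds to the independence coupling, hence is never better than the paper's infimum, but since both are followed immediately by a worst-case bound over $(\theta,\theta')\in[-\bar{\theta},\bar{\theta}]^d\times[-\tilde{\theta},\tilde{\theta}]^d$, the two reductions are interchangeable here, and your version has the advantage of being self-contained. On the explicit Gaussian computation you are right and the paper is not: the correct value is $\int f(x\mid a)^2/f(x\mid b)\,\mathrm{d}x=\exp(\|a-b\|_{\Sigma^{-1}}^2)$, exactly what you derived by completing the square. The paper's MGF-based derivation lands on $\exp\bigl(\langle\theta_j'-\theta_i^0,\,\Sigma^{-1}(\theta_i^0+\theta_j')\rangle\bigr)=\exp\bigl(\|\theta_j'\|_{\Sigma^{-1}}^2-\|\theta_i^0\|_{\Sigma^{-1}}^2\bigr)$, which is not the value of the integral (try $a=-b\ne 0$: the paper's formula gives $1$, the true answer is $\exp(4\|a\|_{\Sigma^{-1}}^2)$); the paper's final constant is nonetheless large enough to dominate the true value, so the stated lemma remains valid, but your intermediate identity is the correct one. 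Your uniformization step $\|\theta-\theta'\|_{\Sigma^{-1}}^2\le\lambda_{\min}^{-1}d(\bar{\theta}+\tilde{\theta})^2\le\lambda_{\min}^{-1}d(5\bar{\theta}^2+4\tilde{\theta}^2)$ is also right and in fact proves the slightly sharper constant $2\bar{\theta}^2+2\tilde{\theta}^2$.
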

\begin{proof}

For the multivariate Gaussian kernel with covariance matrix $\Sigma$, similar to the multivariate Laplace case, using lemma 2 of~\cite{Nguyen-13} with $\phi(x)=\dfrac{1}{x}$, gives us:
    \begin{eqnarray}
    \label{eq:M_bound_gaussian}
    \bigintsss \dfrac{(p_{G_0}(x))^2}{p_G(x)}\mu(\mathrm{d}x) \leq \inf_{\vec{q}\in Q(\vec{p_0},\vec{p'})} \sum_{i,j} q_{ij}\bigintsss \dfrac{(f_{\Sigma}(x|\theta_i^0))^2}{f_{\Sigma}(x|\theta_j')}\mu(\mathrm{d}x),
    \end{eqnarray}
    where $Q(\vec{p_0},\vec{p'})$ is the set of all couplings of $\vec{p_0}$ and $\vec{p'}$, and $f_{\Sigma}(\cdot|\theta)$ is the multivariate Gaussian kernel with covariance parameter $\Sigma$ and mean parameter $\theta$. 
    
\begin{eqnarray}
\label{eq:gaussian_exact_bound_M}
 \bigintsss \dfrac{(f_{\Sigma}(x|\theta_i^0))^2}{f_{\Sigma}(x|\theta_j')}\mu(\mathrm{d}x)&=&\bigintsss f_{\Sigma}(x|\theta_i^0) \exp\left(\dfrac{-\|x-\theta_i^0\|^2_{\Sigma^{-1}} + \|x-\theta_j'\|^2_{\Sigma^{-1}}}{2}\right)\mu(\mathrm{d}x) \\
 &=&\bigintsss f_{\Sigma}(x|\theta_i^0) \exp\left(\dfrac{-\|\theta_j'-\theta_i^0\|^2_{\Sigma^{-1}}}{2} + \langle x-\theta_j', \Sigma^{-1}\theta_j'-\theta_i^0\rangle \right)\mu(\mathrm{d}x)
 \nonumber,
\end{eqnarray}
 where the second equality follows by simple calculation using $x-\theta_i^0=(x-\theta_j') +(\theta_j'-\theta_i^0)$.
 
If $M_{\Sigma}(t|\theta)$ is the moment generating function of the Gaussian distribution with mean $\theta$ and covariance $\Sigma$, then 
\begin{eqnarray}
M_{\Sigma}(t|\theta)= \exp(\langle \theta,t \rangle + \dfrac{1}{2}\langle t, \Sigma t\rangle)\nonumber.
\end{eqnarray}
Using this result , we can rewrite Eq.~\eqref{eq:gaussian_exact_bound_M} as 
\begin{eqnarray}
 \bigintsss \dfrac{(f_{\Sigma}(x|\theta_i^0))^2}{f_{\Sigma}(x|\theta_j')}\mu(\mathrm{d}x)&=&\exp(\langle \theta_j'-\theta_i^0, \Sigma^{-1}\theta_i^0+\theta_j'\rangle ) \leq \exp(2d \lambda_{min}^{-1} (\tilde{\theta} + \bar{\theta})^2 + d\lambda_{min}^{-1}\bar{\theta}^{2})
 \nonumber,
    \end{eqnarray}
The bound on $\int (p_{G_0}(x))^2/p_G(x)\mu(\mathrm{d}x) $ then follows immediately.
\end{proof}

%%%%%%%%%%%%%%%%%%%%%%%%%%%%%%%%%%%%%%%%%%%%%%%%%%%%%%%%%%%%%%%%%%%%%%%%%%%%%%%%%%%%%%%%%%%%%%%%%%%%%%%%%%%%%%%%%
%%%%%%%%%%%%%%%%%%%%%%%%%%%%%%%%%%%%%%%%%%%%%%%%%%%%%%%%%%%%%%%%%%%%%%%%%%%%%%%%%%%%%%%%%%%%%%%%%%%%%%%%%%%%%%%%%
%%%%%%%%%%%%%%%%%%%%%%%%%%%%%%%%%%%%%%%%%%%%%%%%%%%%%%%%%%%%%%%%%%%%%%%%%%%%%%%%%%%%%%%%%%%%%%%%%%%%%%%%%%%%%%%%%

\section{ Theoretical guarantee of Algorithm~\ref{algo:compute_orlicz}}
\label{ssection:Entropic_OT}

We show in this section that the output of Algorithm~\ref{algo:compute_orlicz} converges to the Entropy regularised version of the Orlicz-Wasserstein distance in equation~\eqref{eq:surrogate_orlicz-Wasserstein}.
\begin{proposition}
\label{prop: algo_proof}
Let $\hat{W}^{\lambda}_{\Phi}(\nu_1,\nu_2)$ be the output of Algorithm~\ref{algo:compute_orlicz} and $W^{\lambda}_{\Phi}(\nu_1,\nu_2)$ be as in equation~\eqref{eq:surrogate_orlicz-Wasserstein}. Then 
\begin{eqnarray}
    |\hat{W}^{\lambda}_{\Phi}(\nu_1,\nu_2)-W^{\lambda}_{\Phi}(\nu_1,\nu_2)|<\epsilon.
\end{eqnarray}
\end{proposition}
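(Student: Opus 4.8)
The plan is to recast the computation of $W^{\lambda}_{\Phi}(\nu_1,\nu_2)$ as a one‑dimensional root‑finding problem and then show that Algorithm~\ref{algo:compute_orlicz} is a bracketing scheme for that root. Write $\vec r,\vec c$ for the weight vectors of $\nu_1=\sum_i r_i\delta_{x_i}$ and $\nu_2=\sum_j c_j\delta_{y_j}$, let $M$ be the matrix $M_{ij}=\|x_i-y_j\|$, and for a cost matrix $C$ write $S(C,\lambda,\vec r,\vec c)=\min_{\nu\in\mathcal{Q}(\nu_1,\nu_2)}\bigl[\langle\nu,C\rangle-\tfrac1\lambda H(\nu)\bigr]$ for the entropic optimal‑transport value, so that
\[
 g(\eta):=S\bigl(\Phi(M/\eta),\lambda,\vec r,\vec c\bigr)=\min_{\nu\in\mathcal{Q}(\nu_1,\nu_2)}\Bigl[\textstyle\sum_{i,j}\nu_{ij}\Phi(M_{ij}/\eta)-\tfrac1\lambda H(\nu)\Bigr].
\]
For a fixed coupling $\nu$, $F_\nu(\eta):=\sum_{i,j}\nu_{ij}\Phi(M_{ij}/\eta)-\tfrac1\lambda H(\nu)$ is non‑increasing in $\eta$ since $\Phi$ is non‑decreasing, so $A_\Phi(\nu)=\inf\{\eta>0:F_\nu(\eta)\le1\}$ is the threshold at which $F_\nu$ falls to $1$; swapping the two infima in the definition of $W^{\lambda}_{\Phi}$ gives $W^{\lambda}_{\Phi}(\nu_1,\nu_2)=\inf\{\eta>0:g(\eta)\le1\}$ (the entropic problem defining $g(\eta)$ is a strictly convex minimization over the compact transport polytope, so its value is attained and this step is clean). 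I would also record the analytic facts that make this root well posed: $g$ is continuous (the entropic OT value is smooth in the cost matrix, and $\eta\mapsto\Phi(M/\eta)$ is continuous), non‑increasing on $(0,\infty)$ as a pointwise minimum of non‑increasing functions, satisfies $g(\eta)\to-\tfrac1\lambda\bigl(H(\vec r)+H(\vec c)\bigr)<1$ as $\eta\to\infty$ (the product coupling becomes optimal), and—assuming $\nu_1,\nu_2$ have disjoint supports so $M_{\text{min}}:=\min_{ij}M_{ij}>0$—satisfies $g(\eta)\ge\Phi(M_{\text{min}}/\eta)-\tfrac1\lambda(H(\vec r)+H(\vec c))\to+\infty$ as $\eta\downarrow0$. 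Hence there is a unique threshold $\eta^{*}=W^{\lambda}_{\Phi}(\nu_1,\nu_2)$ with $g(\eta^{*})=1$, $g>1$ on $(0,\eta^{*})$ and $g<1$ on $(\eta^{*},\infty)$.

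The next step is to verify that line~4 of the algorithm produces a valid bracket, $g(x_{\text{upp}})\le1\le g(x_{\text{low}})$. For $x_{\text{upp}}=\max(M)/\Phi^{-1}(1)$ every entry satisfies $\Phi(M_{ij}/x_{\text{upp}})\le\Phi(\Phi^{-1}(1))=1$, so $\sum_{ij}\nu_{ij}\Phi(M_{ij}/x_{\text{upp}})\le1$ and $F_\nu(x_{\text{upp}})\le1-\tfrac1\lambda H(\nu)\le1$ for every coupling, giving $g(x_{\text{upp}})\le1$. For $x_{\text{low}}$ I would combine three facts valid for every coupling $\nu$: Jensen's inequality with convexity of $\Phi$ gives $\sum_{ij}\nu_{ij}\Phi(M_{ij}/x_{\text{low}})\ge\Phi(\langle\nu,M\rangle/x_{\text{low}})$; the definition of $S(M,\lambda,\vec r,\vec c)$ as a minimum gives $\langle\nu,M\rangle\ge S(M,\lambda,\vec r,\vec c)+\tfrac1\lambda H(\nu)$; and the entropy of a coupling dominates each marginal entropy, hence $H(\nu)\ge\tfrac12(H(\vec r)+H(\vec c))$, while subadditivity gives $H(\nu)\le H(\vec r)+H(\vec c)$. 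Inserting the first two bounds together with $H(\nu)\ge\tfrac12(H(\vec r)+H(\vec c))$ into the definition $x_{\text{low}}=\bigl[S(M,\lambda,\vec r,\vec c)+\tfrac1{2\lambda}(H(\vec r)+H(\vec c))\bigr]\big/\Phi^{-1}\bigl(1+\tfrac1\lambda(H(\vec r)+H(\vec c))\bigr)$ yields $\Phi(\langle\nu,M\rangle/x_{\text{low}})\ge1+\tfrac1\lambda(H(\vec r)+H(\vec c))$, and therefore $F_\nu(x_{\text{low}})\ge1+\tfrac1\lambda(H(\vec r)+H(\vec c))-\tfrac1\lambda H(\nu)\ge1$; minimizing over $\nu$ gives $g(x_{\text{low}})\ge1$, so $\eta^{*}\in[x_{\text{low}},x_{\text{upp}}]$.

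Finally I would run the loop invariant. Each pass evaluates $g$ at a point $x_{\text{new}}\in(x_{\text{low}},x_{\text{upp}})$—the regula‑falsi interpolant if it lies inside the bracket, the midpoint otherwise—and, by monotonicity of $g$, replacing $x_{\text{upp}}$ by $x_{\text{new}}$ when $g(x_{\text{new}})<1$ and $x_{\text{low}}$ by $x_{\text{new}}$ otherwise preserves $x_{\text{low}}\le\eta^{*}\le x_{\text{upp}}$. Since the bisection fallback halves $|x_{\text{upp}}-x_{\text{low}}|$ whenever the interpolation step is rejected, and the interpolation step itself never enlarges the bracket, $|x_{\text{upp}}-x_{\text{low}}|\to0$, so the loop stops with $|x_{\text{upp}}-x_{\text{low}}|<\epsilon$; as the output is $\hat W^{\lambda}_{\Phi}(\nu_1,\nu_2)=x_{\text{upp}}$ and $\eta^{*}\in[x_{\text{low}},x_{\text{upp}}]$, we conclude $|\hat W^{\lambda}_{\Phi}(\nu_1,\nu_2)-W^{\lambda}_{\Phi}(\nu_1,\nu_2)|=|x_{\text{upp}}-\eta^{*}|\le|x_{\text{upp}}-x_{\text{low}}|<\epsilon$. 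I expect the main obstacle to be the bracket verification—making the three entropy/Jensen estimates line up exactly with the explicit formula for $x_{\text{low}}$, where the inequality $H(\nu)\ge\tfrac12(H(\vec r)+H(\vec c))$ for couplings is the load‑bearing observation—together with pinning down the degenerate cases (e.g.\ $\nu_1=\nu_2$ or overlapping supports, where $g$ may never exceed $1$ and $\eta^{*}=0$); the convergence of the hybrid root‑finder is otherwise routine.
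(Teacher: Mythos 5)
Your proposal is correct and follows essentially the same route as the paper: characterize $W^{\lambda}_{\Phi}$ as the threshold where the entropic value $S(\Phi(M/\eta),\lambda,\vec r,\vec c)$ crosses $1$, verify the initial bracket via monotonicity of $\Phi$ for $x_{\text{upp}}$ and via Jensen plus the coupling-entropy bounds $\tfrac12(H(\vec r)+H(\vec c))\le H(\nu)\le H(\vec r)+H(\vec c)$ for $x_{\text{low}}$, then invoke the bracketing search. Your write-up is in fact more careful than the paper's (which has an inverted inequality in the ordering $x_{\text{upp}}<W<x_{\text{low}}$ and a definition of $\hat x_{\text{low}}$ inconsistent with line~4 of the algorithm), and your explicit treatment of the swap of infima, the loop invariant, and the degenerate overlapping-support case are worthwhile additions.
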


\begin{proof}
Here M is the cost matrix such that $M_{ij}=\|x_i-y_j\|$.

Note that $S(\Phi(M/W^{\lambda}_{\Phi}(\nu_1,\nu_2)),\lambda,r,c) <1$ and if $S(\Phi(M/\eta),\lambda,r,c) <1$, then $\eta<W^{\lambda}_{\Phi}(\nu_1,\nu_2))$.

If $\hat{x}_{upp}=max(M)/\Phi^{-1}(1)$, $\hat{x}_{low} = d(M,\lambda,r,c)/\Phi^{-1}(1+d(M,\lambda,r,c)-S(M,\lambda,r,c)) $ it is enough to show that \\ $fx_{upp}=S(\Phi(M/\hat{x}_{upp}),\lambda,r,c)<1$,$fx_{low}=S(\Phi(M/\hat{x}_{low}),\lambda,r,c)>1$, since it would imply $x_{upp}:=\hat{W}^{\lambda}_{\Phi}(\nu_1,\nu_2)< W^{\lambda}_{\Phi}(\nu_1,\nu_2)<x_{low}$ and therefore if $|x_{upp}-x_{low}|< \epsilon$, the result holds directly.

We need to show 
\begin{enumerate}
    \item[(i)] $S(\Phi(M/\hat{x}_{upp}),\lambda,r,c)<1$.
    \item [(ii)] $S(\Phi(M/\hat{x}_{low}),\lambda,r,c)>1$.
    \end{enumerate}

For (i), observe that 
\begin{eqnarray}
S(\Phi(M/\hat{x}_{upp}),\lambda,r,c)= \inf_{\nu \in \mathcal{Q}(\nu_1,\nu_2)}\int _{\mathbb{R}^d \times \mathbb{R}^d}\Phi (\|x-y\|/\hat{x}_{upp})\,d\nu(x,y) -(1/\lambda) (H(\nu)) \\
= \inf_{\nu \in \mathcal{Q}(\nu_1,\nu_2)}\int _{\mathbb{R}^d \times \mathbb{R}^d}\Phi (\Phi^{-1}(1)\|x-y\|/max(M))\,d\nu(x,y) -(1/\lambda) (H(\nu)) \leq 1
\end{eqnarray}
The last inequality holds by monotonicity of $\Phi$ combined with $\|x-y\|/max(M) <1$ with $\nu$-probability 1, and the fact that $H(\nu)>0$.

For (ii),note that for any $\nu \in \mathcal{Q}(\nu_1,\nu_2)$, it holds that 
\begin{eqnarray}
& &\int _{\mathbb{R}^d \times \mathbb{R}^d}\Phi (\|x-y\|/\eta)\,d\nu(x,y) -H(\nu)/\lambda \nonumber \\ 
& & \hspace{3 em}\geq  \Phi\left(\int _{\mathbb{R}^d \times \mathbb{R}^d} (\|x-y\|/\eta)\, d\nu(x,y)\right)  -(H(r)+H(c))/\lambda \\
& & \hspace{3 em} \geq \Phi((S(M,\lambda,r,c) + (H(r)+H(c))/2\lambda)/\eta))  -(H(r)+H(c))/\lambda. 
\end{eqnarray}
Both the inequalities hold by monotonicty and convexity of $\Phi$ combined with the fact that $\forall \nu \in \mathcal{Q}(\nu_1,\nu_2)$, it holds that $H(r)+H(c)\geq H(\nu)\geq (H(r)+H(c))/2$. 

Now $\Phi((S(M,\lambda,r,c) + (H(r)+H(c))/2\lambda)/\eta))  -(H(r)+H(c))/\lambda \geq 1$, for any  $\eta\leq \hat{x}_{upp}$, this completes the proof.
\end{proof}

\section{Estimation of number of components for mixing measures}
\label{section: number of components}
%  \nhcomment{Please include some implication of the convergence under Wasserstein means for the convergence of the atoms and masses under infinite mixtures. Something that can motivate our study in Section 4 with Orlicz norm.}
In this section, we consider how Orlicz-Wasserstein distances could be used to improved estimation of the number of components with Gaussian mixture models. Gaussian Mixture models have been used for the purpose of clustering both historically~\cite{GMMneurips2004} as well as in modern applications~\cite{GMM-clustering-19,GMM_clustering_purvasha_19,GMM-clustering-22}. From the Bayesian perspective, often used BNP priors for mixture models tend to overestimate the number of components drastically by producing multiple extraneous components around the "true" components~\cite{Miller-2014}. This makes it difficult to estimate the number of components, where it may of interest~\cite{MacEachern-Muller-98,Green-Richardson-clustering}.

Several recent works have explored the consistent estimation of the number of components with mixture models, both with in-processing~\cite{Manole-Group-sort-fuse} and post-processing~\cite{Guha-MTM-19} techniques. However, while~\cite{Manole-Group-sort-fuse} restricts attention to the overfit setting only,~\cite{Guha-MTM-19} requires the knowledge of explicit contraction rates of respective parameters in Wasserstein distances. As parameter convergence rates of Dirichlet Process Gaussian Mixture models are extremely slow~\cite{Nguyen-13}, this would also affect the estimation of the number of components negatively. The procedures in both the works~\cite{Guha-MTM-19,Manole-Group-sort-fuse} consist of two smaller steps, truncation of extraneous outlier atoms and merging of atoms which are close to the "true" atoms. The results in this work specifically, Theorem~\ref{theorem:posterior_gaussian_excess} provide a low threshold for truncating outlier atoms thereby eliminating outlier atoms more efficiently. Combined with an understanding of convergence behavior around the "true" atoms would allow efficient estimation of the number of components with Dirichlet Process Gaussian Mixture models.

\end{document}